        \title {K- and L-theory of graph products of groups}
       \author{Daniel Kasprowski}
        \address{Mathematisches Institut der Universit\"at Bonn\\
                Endenicher Allee 60\\
                53115 Bonn, Germany}
         \email{kasprowski@uni-bonn.de}
         \urladdr{http://www.math.uni-bonn.de/people/daniel}
         \author{Kevin Li}
        \address{Mathematisches Institut der Universit\"at Bonn\\
                Endenicher Allee 60\\
                53115 Bonn, Germany}
         \email{s6keliii@uni-bonn.de}
                  \author{Wolfgang L\"uck}
        \address{Mathematisches Institut der Universit\"at Bonn\\
                Endenicher Allee 60\\
                53115 Bonn, Germany}
         \email{wolfgang.lueck@him.uni-bonn.de}
          \urladdr{http://www.him.uni-bonn.de/lueck}
         \date{\today}
     \keywords{$K$- and $L$-groups, right-angled Artin and Coxeter groups}
    \subjclass[2010]{18F25, 20F36, 20F55}
\DeclareMathAlphabet{\matheurm}{U}{eur}{m}{n}
\newcommand{\Groupoids}{\matheurm{Groupoids}}
\newcommand{\Groups}{\matheurm{Groups}}
\newcommand{\Spectra}{\matheurm{Spectra}}
\newcommand{\Modcat}[1]{#1\text{-}\matheurm{Modules}}
\newcommand{\eub}[1]{\underline{E}#1}              
\newcommand{\edub}[1]{\underline{\underline{E}}#1} 
\DeclareMathOperator{\ad}{ad}
\DeclareMathOperator{\ch}{ch}
\DeclareMathOperator{\cok}{cok}
\DeclareMathOperator{\colim}{colim}
\DeclareMathOperator{\cone}{cone}
\DeclareMathOperator{\id}{id}
\DeclareMathOperator{\im}{im}
\DeclareMathOperator{\ind}{ind}
\DeclareMathOperator{\mor}{mor}
\DeclareMathOperator{\pt}{pt}
\DeclareMathOperator{\pr}{pr}
\DeclareMathOperator{\RES}{RES}
\DeclareMathOperator{\Tor}{Tor}
\DeclareMathOperator{\Wh}{Wh}
\newcommand{\calfin}{{\mathcal F}{\mathcal I}{\mathcal N}}
\newcommand{\calvcyc}{{\mathcal V}{\mathcal C}{\mathcal Y}}
  \newcommand{\IC}{\mathbb{C}}
  \newcommand{\IQ}{\mathbb{Q}}
  \newcommand{\IR}{\mathbb{R}}
  \newcommand{\IZ}{\mathbb{Z}}
  \newcommand{\calc}{\mathcal{C}}
  \newcommand{\cale}{\mathcal{E}}
  \newcommand{\calf}{\mathcal{F}}
  \newcommand{\calg}{\mathcal{G}}
  \newcommand{\calh}{\mathcal{H}}
  \newcommand{\calk}{\mathcal{K}}
  \newcommand{\calp}{\mathcal{P}}
  \newcommand{\cals}{\mathcal{S}}
  \newcommand{\calw}{\mathcal{W}}
  \newcommand{\bfE}{{\mathbf E}}
  \newcommand{\bff}{{\mathbf f}}
  \newcommand{\bfK}{{\mathbf K}}
  \newcommand{\bfL}{{\mathbf L}}
\newcommand{\EGF}[2]{E_{#2}(#1)}
\newcommand{\SubGF}[2]{\matheurm{Sub}_{#2}(#1)}
\newcounter{commentcounter}
\theoremstyle{plain}
\newtheorem{theorem}{Theorem}[section]
\newtheorem{lemma}[theorem]{Lemma}
\newtheorem*{theorem*}{Theorem}
\newtheorem*{theoremA*}{Theorem A}
\newtheorem*{theoremB*}{Theorem B}
\theoremstyle{definition}
\newtheorem{definition}[theorem]{Definition}
\newtheorem{example}[theorem]{Example}
\newtheorem{remark}[theorem]{Remark}
\newtheorem*{definition*}{Definition}
\theoremstyle{remark}
\let\c@equation=\c@theorem\makeatother
\newcommand{\version}[1]              %marks the date of last editing and compilation
{\begin{center} last edited on #1\\
last compiled on \today\\
name of tex-file: \jobname
\end{center}
}
\begin{document}

\begin{abstract}
  We compute the group homology, the algebraic $K$- and $L$-groups, and the topological $K$-groups of
  right-angled Artin groups, right-angled Coxeter groups, and more generally, graph products.
\end{abstract}

\maketitle

\newlength{\origlabelwidth} \setlength\origlabelwidth\labelwidth

%%%%%%%%%%%%%%%%%%%%%%%%%%%%%%%%%%%%%%%%%%%%%%%%%%%%%%%%%%%%%%%%%%%%
%%%%%%%%%%%%%%%%%%%%%%%%%% Introduction %%%%%%%%%%%%%%%%%%%%%%%%%%%%%%%%
%%%%%%%%%%%%%%%%%%%%%%%%%%%%%%%%%%%%%%%%%%%%%%%%%%%%%%%%%%%%%%%%%%%%

\typeout{------------------- Introduction -----------------}
\section{Introduction}
\label{sec:introduction}

%%%%%%%%%%%%%%%%%%%%%%%%%%%%%%%%%%%%%%%%%%%%%%%%%%%%%%%%%%%%%%%%%%%%

\subsection{Basic setup}
\label{subsec:Basic_setup}

   Suppose that we are given the following data:

   \begin{itemize}

   \item A finite simplicial graph $X$ on the vertex set $V$ and a collection of groups $\calw \coloneqq \{W_v \mid v \in V\}$. Denote by
     $W = W(X,\calw)$ the associated graph product, see Section~\ref{sec:Graph_product_of_groups}, and
     by $\Sigma$ the flag complex associated to $X$.  Examples of graph products are right-angled Artin
     groups and right-angled Coxeter groups;

   \item A commutative ring with unit $\Lambda$ and an equivariant homology theory
     $\calh^?_*$ with values in $\Lambda$-modules, see
     Definition~\ref{def:equivariant_homology_theory}.  Our main examples will be those associated to algebraic $K$- and $L$-theory or topological $K$-theory, which appear in the Farrell--Jones Conjecture or the Baum--Connes Conjecture;

   \item A non-empty class $\calc$ of finite groups which is closed under isomorphisms,
     passage to subgroups and passage to quotient groups. Our main example will be the
     class of all finite groups;

   \item A class $\cale$ of $\Lambda$-modules with the property that for an exact sequence
     $0 \to V_0 \to V_1 \to V_2 \to 0$ the $\Lambda$-module $V_1$ belongs to $\cale$ if
     and only if both $V_0$ and $V_2$ belong to $\cale$.

\end{itemize}

%%%%%%%%%%%%%%%%%%%%%%%%%%%%%%%%%%%%%%%%%%%%%%%%%%%%%%%%%%%%%%%%%%%%

\subsection{Main result}
\label{subsec:main_result}

Fix an integer $n$. We obtain a covariant functor
\[
\calh^{\calc}_n \colon \Groups \to \Modcat{\Lambda}, \quad G \mapsto \calh_n^G(\EGF{G}{\calc}),
\]  
where $\EGF{G}{\calc}$ is the classifying space of the family of subgroups of $G$ which
belong to $\calc$, see Section~\ref{sec:Equivariant_homology_theories} and~\eqref{calh_upper_calc_n}.

Let $\cals$ be the poset of flag subcomplexes of $\Sigma$ and let
$\calp$ be the poset of simplices of $\Sigma$, both ordered by inclusion,
where the empty subcomplex and the empty simplex are allowed.  For an
element $L$ in $\cals$, we can consider the subgraph $X \cap L$ of
$X$.  Let $W(L)$ be the graph product associated to $X \cap L$ and the
collection of groups
$\calw|_{V\cap L} = \{W_v \mid v \; \text{is a vertex of}\; L\}$. With this
notation $W(\Sigma)$ is the graph product $W(X,\calw)$ and $W(\emptyset) = \{1\}$. We obtain a
covariant functor
\[
W_* \colon \cals \to \Groups, \quad L \mapsto W(L).
\]
Let
\[
I  \colon \calp\to \cals
\]
be the inclusion which sends a simplex $\sigma$ of $\Sigma$ to the corresponding flag subcomplex of $\Sigma$.  Sometimes we identify $\sigma$ in $\calp $ with $I(\sigma)$ in
$\cals$. For instance we will often write $W(\sigma)$ instead of $W(I(\sigma))$.
Notice that the covariant functor $W_* \circ I \colon \calp \to \Groups$ sends a
simplex $\sigma $ of $\Sigma$ to $\prod_{v \in V \cap \sigma} W_v$.

We obtain a covariant functor
\[
  \calh^{\calc}_n \circ W_*  \colon \cals \to \Modcat{\Lambda},
  \quad L \mapsto \calh_n^{W(L)}(\EGF{W(L)}{\calc}).
\]
We are interested in the value at $\Sigma$, i.e., in
$\calh_n^{W}(\EGF{W}{\calc})$ for $W = W(X,\calw)$.

The composite
$\calh^{\calc}_n \circ W_* \circ I \colon \calp \to \Modcat{\Lambda}$ is given by
\[\sigma \mapsto \calh^{\calc}_n\left(\prod_{v \in V \cap \sigma} W_v\right)
  = \calh_n^{\prod_{v \in V\cap \sigma} W_v}\left(\prod_{v \in V \cap \sigma}\EGF{W_v}{\calc}\right),
\]
since $\prod_{v \in V \cap \sigma}\EGF{W_v}{\calc}$ is a model for $\EGF{\prod_{v \in V \cap  \sigma} W_v}{\calc}$.

Define for a simplex $\sigma$ the quotient $\Lambda$-module of
$\calh^{\calc}_n \circ W_* \circ I(\sigma)$ by
\[S_{\sigma} \bigl(\calh^{\calc}_n \circ W_* \circ I\bigr) \coloneqq 
  \cok\left(\bigoplus_{\tau} \calh^{\calc}_n \circ W_*\circ I(\tau) \to
      \calh^{\calc}_n \circ W_*\circ I(\sigma)\right),
    \]
    where $\tau$ runs through the simplices of $I(\sigma)$ which are different from
    $\sigma$.  The idea is to kill everything in
    $\calh^{\calc}_n \circ W_* \circ I(\sigma)$ which comes from a proper
    simplex $\tau$ of $I(\sigma)$.

For a simplex $\sigma$ of $\Sigma$, let $\ch_n(\Sigma,\sigma)$ be the
set of $n$-chains $\sigma_0 < \sigma_1 < \cdots < \sigma_n$ in $\calp$ with
$\sigma_0 = \sigma$. Define the integer
\begin{equation*}
n_{\sigma} \coloneqq  \sum_{n \ge 0} (-1)^n \cdot |\ch_n(\Sigma,\sigma)|.
%\label{n_sigma_intro}
\end{equation*}

Denote by $G_0(\cale)$ the Grothendieck group of elements in $\cale$, i.e., the abelian
group with the isomorphism classes of elements in $\cale$ as generators and relations
$[V_1] = [V_0]+ [V_2]$ for every short exact sequence $0 \to V_0 \to V_1 \to V_2 \to 0$ of
$\Lambda$-modules belonging to $\cale$.

\begin{theorem}[Main Theorem]\label{the:main_theorem}
  \
  \begin{enumerate}
\item\label{the:main_theorem:iso_T}
 The canoncial $\Lambda$-homomorphism
\[
  T \colon  \colim_{\sigma \in \calp} \calh_n^{W(\sigma)}(\EGF{W(\sigma)}{\calc}) \xrightarrow{\cong}
    \calh_n^{W(\Sigma)}(\EGF{W(\Sigma)}{\calc})
\]
is an isomorphism;

\item\label{the:main_theorem:splitting_the_colimit_iso_T}
  For every $\sigma \in \calp$, the canonical projection
  \[
    p_{\sigma} \colon    \calh^{\calc}_n \circ W_* \circ I(\sigma)
    \to S_{\sigma} \bigl(\calh^{\calc}_n \circ W_* \circ I\bigr)
  \]
  has a section
  \[s_{\sigma} \colon S_{\sigma} \bigl(\calh^{\calc}_n \circ W_* \circ I\bigr)
    \to \calh^{\calc}_n \circ W_* \circ I(\sigma).
  \]
    Any collection of such sections and the canonical maps
    $\calh^{\calc}_n \circ W_* \circ I(\sigma) \to \colim_{\sigma \in \calp} \calh^{\calc}_n \circ W_* \circ I$
    induce an isomorphism
  \[
    \bigoplus_{\sigma \in \calp}  S_{\sigma} \bigl(\calh^{\calc}_n \circ W_* \circ I\bigr)
    \xrightarrow{\cong}
    \colim_{\sigma \in \calp} \calh_n^{W(\sigma)}(\EGF{W(\sigma)}{\calc}).
  \]
Moreover, there is an explicit section $s_\sigma$;

\item\label{the:main_theorem:Grothendieck}
  Suppose that each $\Lambda$-module $\calh_n^{W(\sigma)}(\EGF{W(\sigma)}{\calc})$ belongs to
  $\cale$. Then we get in $G_0(\cale)$
 \[
   [\calh_n^{W(\Sigma)}(\EGF{W(\Sigma)}{\calc})] = \sum_{\sigma \in \calp} n_{\sigma}
   \cdot [\calh_n^{W(\sigma)}(\EGF{W(\sigma)}{\calc})].
 \]
   
\end{enumerate}
\end{theorem}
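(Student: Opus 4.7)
The plan is to handle the three parts in sequence: part (i) provides the geometric input, part (ii) extracts a combinatorial splitting from parabolic retractions, and part (iii) follows from (ii) by an Euler characteristic computation on the simplex poset $\calp$. For part (i), I would construct a $W(\Sigma)$-CW-model for $\EGF{W(\Sigma)}{\calc}$ as the colimit
\[
Z \;=\; \colim_{\sigma \in \calp} W(\Sigma) \times_{W(\sigma)} \EGF{W(\sigma)}{\calc}.
\]
Using the normal form for graph products, every subgroup of $W(\Sigma)$ belonging to $\calc$ is subconjugate to some parabolic $W(\sigma)$, so $Z$ has the correct isotropy structure to serve as a model for $\EGF{W(\Sigma)}{\calc}$. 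The map $T$ is then the natural comparison into $\calh_n^{W(\Sigma)}(Z)$. To show $T$ is an isomorphism, I would apply an equivariant Atiyah--Hirzebruch-type spectral sequence associated to the cellular filtration of $Z$ by simplex dimension, and verify that all higher terms vanish, using that the transition maps $\calh_n^{W(\tau)}(\EGF{W(\tau)}{\calc}) \to \calh_n^{W(\sigma)}(\EGF{W(\sigma)}{\calc})$ are split injections via the retractions of (ii) below.

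For part (ii), the key tool is the canonical retraction $r_\tau^\sigma \colon W(\sigma) \to W(\tau)$ defined for $\tau \leq \sigma$ by killing the factors $W_v$ with $v \in \sigma \setminus \tau$; this is well defined because $W(\sigma) = \prod_{v \in \sigma} W_v$ is a direct product. These retractions satisfy $r_\tau^\sigma \circ i_\tau^\sigma = \id_{W(\tau)}$ and are compatible under composition, so on equivariant homology they induce a system of split short exact sequences. By induction on $\dim \sigma$ this yields a natural decomposition
\[
\calh^\calc_n \circ W_* \circ I(\sigma) \;\cong\; \bigoplus_{\tau \leq \sigma} S_\tau\bigl(\calh^\calc_n \circ W_* \circ I\bigr),
\]
and an explicit section of $p_\sigma$ is given by the M\"obius-type idempotent
\[
e_\sigma \;=\; \sum_{\tau \leq \sigma} (-1)^{\dim \sigma - \dim \tau}\, (i_\tau^\sigma)_* \circ (r_\tau^\sigma)_*
\]
on $\calh^\calc_n(W(\sigma))$, whose image provides a canonical lift of $S_\sigma$. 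Combining these sections with the canonical maps into the colimit gives the asserted direct sum decomposition of $\colim_{\sigma \in \calp} \calh_n^{W(\sigma)}(\EGF{W(\sigma)}{\calc})$.

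For part (iii), parts (i) and (ii) together yield $[\calh_n^{W(\Sigma)}(\EGF{W(\Sigma)}{\calc})] = \sum_{\sigma \in \calp} [S_\sigma]$ in $G_0(\cale)$, while applying (ii) to each sub-simplex poset $\{\tau \in \calp : \tau \leq \sigma\}$ gives $[\calh_n^{W(\sigma)}(\EGF{W(\sigma)}{\calc})] = \sum_{\tau \leq \sigma} [S_\tau]$. Substituting and interchanging summation,
\[
\sum_{\sigma \in \calp} n_\sigma \cdot [\calh_n^{W(\sigma)}(\EGF{W(\sigma)}{\calc})] \;=\; \sum_{\tau \in \calp} \Bigl(\sum_{\sigma \geq \tau} n_\sigma\Bigr) [S_\tau].
\]
By definition, $\sum_{\sigma \geq \tau} n_\sigma$ is the Euler characteristic of the order complex of $\calp_{\geq \tau} = \{\sigma \in \calp : \sigma \geq \tau\}$. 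Since $\tau$ is the unique minimum of $\calp_{\geq \tau}$, this order complex is a cone with apex $\tau$, hence contractible, so its Euler characteristic equals $1$. The right hand side therefore reduces to $\sum_\tau [S_\tau] = [\calh_n^{W(\Sigma)}(\EGF{W(\Sigma)}{\calc})]$, as required.

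The main obstacle will be part (i): constructing the geometric model $Z$ compatibly with the simplex filtration and verifying that $T$ is an honest isomorphism, rather than an isomorphism only up to higher homotopical terms. Once (i) is in place, parts (ii) and (iii) follow formally from the parabolic retraction structure and M\"obius inversion on the poset $\calp$.
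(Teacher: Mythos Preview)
Your treatment of parts (ii) and (iii) is essentially correct and quite close to the paper's argument. The M\"obius-type idempotent $e_\sigma = \sum_{\tau \le \sigma} (-1)^{\dim\sigma - \dim\tau}(i_\tau^\sigma)_*(r_\tau^\sigma)_*$ is, after expanding the paper's iterated construction $s_\tau = s^{-1}_\tau \circ \cdots \circ s^{\dim(\tau)-1}_\tau$, the same section; the paper packages the retractions into a ``Mackey $\Lambda\calp$-module'' and proves the splitting and the vanishing of higher $H_n(\calp;M_*)$ in that language, then deduces (iii) from the bar resolution rather than direct M\"obius inversion. Your identity $\sum_{\sigma \ge \tau} n_\sigma = 1$ via contractibility of the cone $\calp_{\ge \tau}$ is a clean alternative to that.

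The real issue is part (i). Your plan is to build $Z = \colim_{\sigma \in \calp} W(\Sigma) \times_{W(\sigma)} \EGF{W(\sigma)}{\calc}$ and argue it is a model for $\EGF{W(\Sigma)}{\calc}$. Knowing that every finite subgroup of $W(\Sigma)$ is subconjugate into some $W(\sigma)$ only tells you $Z$ has the right isotropy types; it does \emph{not} establish that the fixed-point sets $Z^H$ are contractible, which is the substance of the claim. This is a genuine theorem about graph products (a polyhedral-product style result), not a formality, and you have not proved it. Moreover, even granting $Z \simeq \EGF{W(\Sigma)}{\calc}$, the spectral sequence collapse you invoke amounts precisely to the vanishing of higher derived colimits of the $\calp$-diagram of homology modules --- which is exactly the Mackey-module statement $H_n(\calp;M_*)=0$ for $n\ge 1$ that underlies (ii). So the geometric model buys you nothing you did not already need algebraically.

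The paper avoids all of this. It never builds a global model for $\EGF{W(\Sigma)}{\calc}$. Instead it proves $T_n(L)$ is an isomorphism for every flag subcomplex $L$ by induction on the number of vertices: if $L$ is not itself a simplex, pick a vertex $v_1$, let $L_1$ be the closed star of $v_1$ (a cone), $L_2$ the full subcomplex on the remaining vertices, $L_0 = L_1 \cap L_2$. Then $W(L) \cong W(L_1) \ast_{W(L_0)} W(L_2)$, and the Mayer--Vietoris sequence for this amalgamated product (which uses only that the Bass--Serre tree has contractible finite-fixed-point sets) gives a short exact sequence because $W(L_0) \to W(L_1)$ is split injective. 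A matching right-exact sequence holds on the colimit side since $\calp(L) = \calp(L_1) \cup_{\calp(L_0)} \calp(L_2)$, and the Five Lemma finishes the induction. This is both more elementary and more robust than constructing and analysing $Z$; I would recommend replacing your approach to (i) with this one.
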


We mention that it is both unusual and fortunate that in
assertion~\ref{the:main_theorem:iso_T} the source and target involve the same
degree. In general one would expect that in the source all degrees $m \le n$ occur. The
reason for this simplification is that for each simplex $\sigma$ the inclusion
$W(\sigma) \to W$ is split injective. This leads also to the explicit splitting in
assertion~\ref{the:main_theorem:splitting_the_colimit_iso_T}.

Note that the number $n_{\sigma}$ appearing in
assertion~\ref{the:main_theorem:Grothendieck} depends only on $\Sigma$.  It is given by
$1 -\chi(\Sigma)$ for $ \sigma = \emptyset$. It has the following geometric interpretation
if $\sigma$ is non-empty. Let $\Sigma'$ be the barycentric subdivision of $\Sigma$.  Then
$\ch_n(\Sigma,\sigma)$ can be interpreted as a collection of $n$-simplices in $\Sigma'$.
Each simplex in $\ch_n(\Sigma,\sigma)$ contains the vertex given by $\sigma$. The
collection of the faces of all these simplices of $\ch_n(\Sigma,\sigma)$ determines a
simplicial subcomplex $D_{\sigma}$ of $\Sigma'$ which can be contracted to the vertex
given by $\sigma$.  Its boundary $\partial D_{\sigma}$ consists of all those faces of
simplices of $\ch_n(\Sigma,\sigma)$ which do not contain $\sigma$. One easily checks
\[
n_{\sigma} = \chi(D_{\sigma}) - \chi(\partial D_{\sigma}) = 1 - \chi(\partial D_{\sigma}).
\]
If $\Sigma$ is the triangulation of a closed manifold of dimension $d$, then $\partial D_{\sigma}$
is homeomorphic to $S^{d - 1- \dim(\sigma)}$ and hence $n_{\sigma}  = (-1)^{d - \dim(\sigma)}$.

%%%%%%%%%%%%%%%%%%%%%%%%%%%%%%%%%%%%%%%%%%%%%%%%%%%%%%%%%%%%%%%%%%%%

\subsection{Computations}
\label{subsec:Computations}

We will illustrate the potential of Theorem~\ref{the:main_theorem} by computing the group
homology of $G$, the algebraic $K$- and $L$-theory of the group ring of $G$, and the
topological $K$-theory of the group $C^*$-algebra of $G$ in
Sections~\ref{sec:right-angled_Artin_groups} and~\ref{sec:right-angled_Coxeter_groups} if
$G$ is a right-angled Artin group or a right-angled Coxeter group. These computations are based on the
Baum--Connes Conjecture and the Farrell--Jones Conjecture which we will briefly recall in
Section~\ref{sec:Brief_review_of_the_Baum-Connes_Conjecture_and_the_Farrell-Jones_Conjecture}
and which hold for these groups. The situation in the Farrell--Jones setting is more complicated
since we have to deal with the family $\calvcyc$ of virtually cyclic subgroups, whereas in
Theorem~\ref{the:main_theorem} the family $\calfin$ of finite subgroups is considered.
The passage from $\calfin$ to $\calvcyc$ is discussed in
Subsection~\ref{subsec:The_passage_from_calfin_to_calvcyc}. This is different in the
Baum--Connes setting since there the family $\calfin$ is used. In order to get full
functoriality we need to consider the maximal group $C^*$-algebra instead of the reduced
$C^*$-algebra which makes no difference for right-angled Artin groups and right-angled
Coxeter groups.

Computation in this context means not only that we identify the
corresponding $K$- and $L$-groups of $G$ as abelian groups but we give
explicit isomorphisms identifying them with $K$- and $L$-groups of the
ground ring.  For instance, we show for a right-angled Coxeter group
$W$ associated to the finite flag complex $\Sigma$ that there is for
every $n \in \IZ$ an isomorphism
  \[
    \bigoplus_{\sigma} K_n(f_{\sigma}) \colon \bigoplus_{\sigma} K_n(\IC) \xrightarrow{\cong}    K_n(C^*_r(W)),
    \\
   \]
   where $\sigma$ runs through the simplices of $\Sigma$ including the empty simplex and
   $f_{\sigma} \colon \IC \to C^*_r(W)$ is an explicit homomorphism of $C^*$-algebras
   depending on $\sigma$. If $\sigma$ is empty, it is given by the obvious inclusion
   $\IC \to C^*_r(W)$.  If $k = \dim(\sigma) \ge 0$, then $\sigma$ determines a subgroup
   $W(\sigma) = \prod_{i=1}^{k+1} \IZ/2$ of $W$ and $f_{\sigma}$  is the composite of
   the homomorphism $\IC \to \IC[\prod_{i=1}^{k+1} \IZ/2]$ sending $\lambda$ to
   $2^{-k-1} \cdot \lambda \cdot \prod_{i= 1}^{k+1} (1 -t_i)$ for $t_i$ the generator  of
   the $i$-th factor $\IZ/2$ and the homomorphism $\IC[W(\sigma)] \to C^*_r(W)$ coming
   from the inclusion $W(\sigma) \to W$, see Remark~\ref{rem:Explicite_isomorphism}.  This implies
  \[
     K_n(C^*_r(W))\cong
    \begin{cases}
      \IZ^r & \text{if}\; n \; \text{is even};
      \\
      \{0\} & \text{otherwise},
    \end{cases}
  \]
  where $r$ is the number of simplices of $\Sigma$ including the empty simplex. Moreover,
  we can write down an explicit basis $B = \{b_{\sigma} \mid \sigma \in \calp\}$ for the
  finitely generated free $\IZ$-module $K_0(C^*_r(W))$, namely, for $\sigma = \emptyset$
  we take the class of  the idempotent $1$ in $C^*_r(W)$ and for $\sigma \not= \emptyset$ we take the
  class of the idempotent in $C^*_r(W)$ given by the image of the idempotent
  $2^{-k-1} \cdot \prod_{i= 1}^{k+1} (1 -t_i) \in \IC[W(\sigma)]$ under the
  inclusion $\IC[W(\sigma)] \subset C^*_r(W)$. These computations for right-angled Coxeter groups were carried out in the second author's master's thesis~\cite{Li(2019)}.

%%%%%%%%%%%%%%%%%%%%%%%%%%%%%%%%%%%%%%%%%%%%%%%%%%%%%%%%%%%%%%%%%%%%

\subsection{Acknowledgments}  
\label{subsec:Acknowledgements}

The paper was financially supported by the ERC Advanced Grant ``KL2MG-interactions'' (no. 662400) of the third author granted by the European Research Council. It was also funded by the Deutsche Forschungsgemeinschaft (DFG, German Research Foundation)
under Germany's Excellence Strategy - GZ 2047/1, Projekt-ID 390685813.

%The paper is organized as follows:
\tableofcontents

%%%%%%%%%%%%%%%%%%%%%%%%%%%%%%%%%%%%%%%%%%%%%%%%%%%%%%%%%%%%%%%%%%%%%
%%%%%%%%%%%%%%%%%%%%%%%%% Equivariant homology theories  %%%%%%%%%%%%%%%%%%%%%
%%%%%%%%%%%%%%%%%%%%%%%%%%%%%%%%%%%%%%%%%%%%%%%%%%%%%%%%%%%%%%%%%%%%%

\typeout{-------------  Equivariant homology theories ----------------------}

\section{Equivariant homology theories and classifying spaces of families}
\label{sec:Equivariant_homology_theories}

In this section we recall the axioms of an equivariant homology theory and the notion of a classifying space of a family of subgroups. For an amalgamated product of groups, we deduce a Mayer--Vietoris type sequence for the values of an equivariant homology theory on classifying spaces.

Fix a discrete group $G$ and a commutative ring $\Lambda$ with unit.
A \emph{$G$-homology theory $\calh_*^G$ with values in $\Lambda$-modules} is a collection of covariant functors $\calh^G_n$ from the category of $G$-$CW$-pairs to the category of $\Lambda$-modules indexed by $n \in \IZ$ together with natural transformations
\[
	\partial_n^G(X,A)\colon  \calh_n^G(X,A) \to
	\calh_{n-1}^G(A)\coloneqq \calh_{n-1}^G(A,\emptyset)
\]
for $n \in \IZ$ such that the axioms concerning $G$-homotopy invariance, the long exact sequence of a pair, excision, and disjoint unions are satisfied, see~\cite[Section~1]{Lueck(2002b)}.

Let $\alpha\colon H \to G$ be a group homomorphism.  Given an $H$-space $X$, define the \emph{induction of $X$ with $\alpha$} to be the $G$-space
$  \alpha_* X%
   =  
        G \times_{\alpha} X%
       $
which is the quotient of $G \times X$ by the right $H$-action
$(g,x) \cdot h \coloneqq (g\alpha(h),h^{-1} x)$ for $h \in H$ and $(g,x) \in G \times X$.
The following definition is taken from~\cite[Section~1]{Lueck(2002b)} except that the induction structure in this paper is defined for every group homomorphism $\alpha$. 

\begin{definition}
\label{def:equivariant_homology_theory}
An \emph{equivariant homology theory $\calh^?_*$ with values in $\Lambda$-modules} assigns to each group $G$ a $G$-homology theory $\calh^G_*$ with values in $\Lambda$-modules together with the following so called \emph{induction structure}:%

Given a group homomorphism $\alpha\colon H \to G$ and an $H$-$CW$-pair $(X,A)$, there are for every $n \in \IZ$ natural homomorphisms
\begin{eqnarray*}
	&\ind_{\alpha} \colon  \calh_n^H(X,A) \to \calh_n^G(\alpha_* (X,A)) &
\end{eqnarray*}
satisfying:

\begin{itemize}

\item Compatibility with the boundary homomorphisms:\\[1mm]
$\partial_n^G \circ \ind_{\alpha} = \ind_{\alpha} \circ \partial_n^H$;

\item Functoriality:\\[1mm]
Let $\beta\colon G \to K$ be another group homomorphism.
Then we have for $n \in \IZ$
\[
\ind_{\beta \circ \alpha}  = \calh^K_n(f_1)\circ\ind_{\beta} \circ \ind_{\alpha} \colon
\calh^H_n(X,A) \to \calh_n^K((\beta\circ\alpha)_*(X,A)),
\]
where $f_1\colon  \beta_*(\alpha_*(X,A))
\xrightarrow{\cong} (\beta\circ\alpha)_*(X,A),
\quad (k,g,x) \mapsto (k\beta(g),x)$
is the natural $K$-homeo\-mor\-phism;

\item Compatibility with conjugation:\\[1mm]
For $n \in \IZ$, $g \in G$ and a  $G$-$CW$-pair $(X,A)$
the map \[\ind_{c(g)\colon  G \to G}\colon \calh^G_n(X,A)\to
\calh^G_n(c(g)_*(X,A))\] agrees with
$\calh_n^G(f_2)$ for the $G$-homeomorphism
$f_2\colon  (X,A) \to c(g)_*(X,A)$ which sends
$x$ to $(1,g^{-1}x)$ in $G\times_{c(g)} (X,A)$;

\item Bijectivity:\\[1mm]
If $\ker(\alpha)$ acts freely on $X\setminus A$, then 
$\ind_{\alpha}\colon  \calh_n^H(X,A) \to \calh_n^G(\alpha_*(X,A))$
is bijective for all $n \in \IZ$.
\end{itemize}
\end{definition}

We briefly fix some conventions concerning spectra.
If $X$ is a space, denote by $X_+$ the pointed space obtained from $X$ by adding a disjoint base point.
Let $\Spectra$ be the category of spectra in the following naive sense.
A \emph{spectrum}
$\mathbf{E} = \{(E(n),\sigma(n)) \mid n \in
\IZ\}$ is a sequence of pointed spaces
$\{E(n) \mid n \in \IZ\}$ together with pointed maps
called \emph{structure maps}
$\sigma(n) \colon  E(n) \wedge S^1 \longrightarrow E(n+1)$.
A \emph{map of spectra}
$\bff \colon  \bfE \to \bfE^{\prime}$ is a sequence of maps
$f(n) \colon  E(n) \to E^{\prime}(n)$
which are compatible with the structure maps $\sigma(n)$, i.e., we have
$f(n+1) \circ \sigma(n)  = 
\sigma^{\prime}(n) \circ \left(f(n) \wedge \id_{S^1}\right)$
for all $n \in \IZ$. Given a spectrum $\bfE$ and a pointed space $X$, we can define their
smash product $X \wedge \bfE$
by $(X \wedge \bfE)(n) \coloneqq X \wedge E(n)$ with the obvious structure
maps.

It is a classical result that a spectrum $\bfE$ defines a homology
theory by setting
\[
H_n(X,A;\bfE) = \pi_n\left((X_+ \cup_{A_+} \cone(A_+)) \wedge \bfE \right),
\]
where $\cone$ denotes the reduced cone.  We want to extend this to equivariant homology theories. 

Let $\Groupoids$  be the category of small connected groupoids with covariant functors as
morphisms.  Notice that a group can be considered as a groupoid with one object in the obvious way.

For the proof of the following result we refer to~\cite[Proposition~157 on  page~796]{Lueck-Reich(2005)}.

\begin{theorem}
\label{the:GROUPOID-spectra_and_equivariant_homology_theories}
Consider a covariant $\Groupoids$-spectrum
\[
\bfE\colon \Groupoids \to \Spectra.
\]
Suppose that $\bfE$ respects equivalences, i.e., it sends an
equivalence of groupoids to a weak equivalence of spectra.

Then $\bfE$ defines an equivariant homology theory $ H_*^?(-;\bfE)$
such that we have
\[
H^G_n(G/H; \bfE)  \cong  H^H_n(\pt; \bfE)  \cong  \pi_n(\bfE(H))
\]
for every group $G$, subgroup $H \subseteq G$ and $n \in \IZ$.
The construction is natural in $\bfE$.
\end{theorem}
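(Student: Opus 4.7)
The plan is to apply the Davis--L\"uck machine, which turns a $\Groupoids$-spectrum into an equivariant homology theory. For each group $G$, let $\overline{G}\colon \OrG{G} \to \Groupoids$ be the transport groupoid functor sending $G/H$ to the action groupoid with object set $G/H$ and morphisms $gH \to g'H$ given by those $k \in G$ with $k \cdot gH = g'H$; set $\bfE^G \coloneqq \bfE \circ \overline{G}$. For a $G$-$CW$-pair $(X,A)$ I would define
\[
  H^G_n(X,A;\bfE) \coloneqq \pi_n\bigl((X_+ \cup_{A_+} \cone(A_+)) \wedge_{\OrG{G}} \bfE^G\bigr),
\]
where $\wedge_{\OrG{G}}$ denotes the coend of the pointed $\OrG{G}$-space $G/H \mapsto (X^H)_+$ against the $\OrG{G}$-spectrum $\bfE^G$.

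That this yields a $G$-homology theory is then a formal consequence of standard properties of coends and smash products: excision and disjoint unions are built into the construction, while the long exact sequence of a pair comes from the cofiber sequence $A_+ \to X_+ \to X_+ \cup_{A_+} \cone(A_+)$ after smashing over $\OrG{G}$. The hypothesis that $\bfE$ respects equivalences is what guarantees $G$-homotopy invariance, via a cellular comparison along $X \times I \to X$. For the induction structure, a homomorphism $\alpha \colon H \to G$ gives a functor $\OrG{H} \to \OrG{G}$ sending $H/K$ to $G/\alpha(K)$ together with a natural transformation of transport groupoids $\overline{H} \Rightarrow \overline{G} \circ \alpha_*$; smashing and comparing coends produces the map $\ind_\alpha$. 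Compatibility with the boundary homomorphism, functoriality, and compatibility with conjugation then follow by chasing naturality in the coend construction and noting that the groupoid-level natural transformations displayed above are themselves functorial and satisfy the analogous identities.

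The main obstacle is the bijectivity axiom. Assume $\ker(\alpha)$ acts freely on $X \setminus A$. Filtering $X$ by its relative skeleta and applying the five lemma reduces the claim to showing that for every isotropy group $K \le H$ of an equivariant cell in $X \setminus A$, the induced map $\bfE^H(H/K) \to \bfE^G(G/\alpha(K))$ is a weak equivalence of spectra. Freeness of the $\ker(\alpha)$-action forces $K \cap \ker(\alpha) = \{1\}$, so $\alpha|_K \colon K \xrightarrow{\cong} \alpha(K)$; consequently the functor of transport groupoids $\overline{H}(H/K) \to \overline{G}(G/\alpha(K))$ is an equivalence of groupoids. Because $\bfE$ respects equivalences by hypothesis, the spectrum-level map is a weak equivalence, and bijectivity follows.

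Finally, the value at an orbit is read off the definition: the coend taken against the single $G$-space $G/H$ evaluates to $\bfE^G(G/H) = \bfE(\overline{G}(G/H))$. Choosing the coset $eH$ as basepoint identifies the one-object groupoid $H$ with a full subgroupoid of $\overline{G}(G/H)$ whose inclusion is an equivalence of groupoids, and applying $\bfE$ to this equivalence gives $\pi_n(\bfE^G(G/H)) \cong \pi_n(\bfE(H))$, which is also $H^H_n(\pt;\bfE)$ by the same argument applied to $H$. Naturality in $\bfE$ is automatic, since each step in the construction — forming $\bfE^G$, smashing with $(X_+ \cup_{A_+} \cone(A_+))$, and passing to homotopy groups — is functorial in the input spectrum.
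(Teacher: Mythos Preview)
The paper does not give its own proof of this theorem; it simply refers to L\"uck--Reich, \emph{The Baum--Connes and the Farrell--Jones conjectures in $K$- and $L$-theory}, Proposition~157. Your outline is precisely the Davis--L\"uck construction carried out there: compose $\bfE$ with the transport groupoid functor $\overline{G}\colon\OrG{G}\to\Groupoids$ to obtain an $\OrG{G}$-spectrum, then define $H^G_n$ via the smash product over $\OrG{G}$, and verify the axioms. So in substance you are reproducing the cited argument.

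One small correction: $G$-homotopy invariance does \emph{not} require that $\bfE$ respect equivalences. It follows formally from the fact that the fixed-point diagram of a $G$-$CW$-complex is already cofibrant as a contravariant $\OrG{G}$-space, so the smash product over $\OrG{G}$ sends $G$-homotopy equivalences to weak equivalences regardless of $\bfE$. The hypothesis that $\bfE$ respects equivalences is used exactly where you invoke it elsewhere: for the bijectivity axiom (the transport groupoid map $\overline{H}(H/K)\to\overline{G}(G/\alpha(K))$ is an equivalence when $K\cap\ker\alpha=\{1\}$), and for the identification $H^G_n(G/H;\bfE)\cong\pi_n(\bfE(H))$ via the inclusion of $H$ as a skeleton of $\overline{G}(G/H)$. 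With that adjustment your sketch is correct.
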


\begin{example}[Borel homology]
\label{exa:Borel_homology}
Let $\bfE$ be a spectrum. Let $H_*(-;\bfE)$ be the (non-equivariant)
homology theory associated to $\bfE$.
Given a groupoid $\calg$, denote by $E\calg$ its classifying space.
If $\calg$ has only one object and the automorphism group of this object is $G$,
then $E\calg$ is a model for $EG$. We obtain two covariant functors
\begin{align*}
c_{\bfE} \colon \Groupoids \to \Spectra, &\quad \calg \mapsto \bfE;
\\
b_{\bfE} \colon \Groupoids \to \Spectra, &\quad \calg \mapsto E\calg_+ \wedge \bfE.
\end{align*} 
Thus we obtain two equivariant homology theories
$H^?_*(-;c_{\bfE})$ and $H^?_*(-;b_{\bfE})$ from 
Theorem~\ref{the:GROUPOID-spectra_and_equivariant_homology_theories}.
The second one is called the \emph{equivariant Borel homology associated to $H_*(-;\bfE)$}.
We get for any group $G$ and any $G$-$CW$-complex $X$ natural isomorphisms
\begin{eqnarray*}
H_n^G(X;c_{\bfE}) & \cong & H_n(G\backslash X;\bfE);
\\
H_n^G(X;b_{\bfE}) & \cong & H_n(EG \times_G X;\bfE).
\end{eqnarray*}
\end{example}

Let $\calc$ be a non-empty class of groups which is closed under isomorphisms, passage to
subgroups and passage to quotient groups.  Our main examples will be the class $\calfin$ of
finite groups and the class $\calvcyc$ of virtually cyclic groups. 

Given a group $G$,
denote by $\EGF{G}{\calc}$ the \emph{classifying space of $G$ with respect to the family of subgroups
$\calf_{\calc}(G) = \{ H \subseteq G \mid H \in \calc\}$}. 
It is defined to be a terminal object in the $G$-homotopy category of $G$-$CW$-complexes, whose isotropy groups belong to $\calf_{\calc}(G)$.
A model for $\EGF{G}{\calc}$ is a
$G$-$CW$-complex whose $H$-fixed point set is contractible if $H \in \calc$ and is empty
if $H \notin \calc$.  With this notation $\EGF{G}{\calfin}$ is the \emph{classifying space of
proper actions}, sometimes also denoted by $\eub{G}$. We sometimes denote
$\EGF{G}{\calvcyc}$ by $\edub{G}$. For more information about classifying spaces of
families we refer for instance to~\cite{Lueck(2005s)}.

Given a group homomorphism $f \colon G \to H $, we denote by
\begin{equation*}
 \EGF{f}{\calc} \colon f_*\EGF{G}{\calc}
 \to \EGF{H}{\calc}
\end{equation*}
the up to $H$-homotopy unique $H$-map coming from the universal property of
$\EGF{H}{\calc}$ and the fact that $f_*\EGF{G}{\calc}$ is an $H$-$CW$-complex whose
isotropy groups are of the shape $f(K)$ for $K \in \calc$ and hence all belong to $\calc$
again. Given an equivariant homology theory $\calh^?_*$ with values in $\Lambda$-modules,
it induces homomorphisms of $\IZ$-graded $\Lambda$-modules
\begin{equation*}
f_* \colon \calh^G_*(\EGF{G}{\calc}) \xrightarrow{\ind_f} \calh^H_*(f_*\EGF{G}{\calc})
\xrightarrow{\calh_*^H(\EGF{f}{\calc})}  \calh^H_*(\EGF{H}{\calc}).
\end{equation*}
One easily checks that thus we obtain a covariant functor 
\begin{equation}
  \calh^{\calc}_n \colon \Groups \to \Modcat{\Lambda}, \quad G \mapsto \calh^G_n(\EGF{G}{\calc}).
  \label{calh_upper_calc_n}
\end{equation}

\begin{lemma}\label{lem:amalgamated_products}
Let $G_0$, $G_1$ and $G_2$ be subgroups of $G$ satisfying $G_0 \subseteq G_1,G_2$.
Suppose that the inclusions $i_k \colon G_k \to G$ for $k =0,1,2$ induce an
isomorphism $G_1 \ast_{G_0} G_2 \xrightarrow{\cong} G$.
Let $j_k \colon G_0 \to G_k$ be the inclusion for $k = 1,2$.
Suppose that each element in $\calc$ is a finite group.

Then we obtain a long exact Mayer--Vietoris sequence
  \begin{multline*}
    \cdots \xrightarrow{\partial_{n+1}} \calh^{G_0}_n(\EGF{G_0}{\calc}) \xrightarrow{(j_1)_n \times - (j_2)_n}
    \calh^{G_1}_n(\EGF{G_1}{\calc})  \oplus \calh^{G_2}_n(\EGF{G_2}{\calc})
    \\
    \xrightarrow{(i_1)_n \oplus (i_2)_n}
    \calh^{G}_n(\EGF{G}{\calc})  \xrightarrow{\partial_n} \calh^{G_0}_{n-1}(\EGF{G_0}{\calc})
    \xrightarrow{(j_1)_{n-1} \times -(j_2)_{n-1}} \cdots.
  \end{multline*}
  \end{lemma}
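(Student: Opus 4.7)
The plan is to construct an explicit $G$-$CW$-model for $\EGF{G}{\calc}$ as a double mapping cylinder built from the $\EGF{G_k}{\calc}$, and then to read off the Mayer--Vietoris sequence from the corresponding pushout decomposition together with the induction isomorphism.

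First I would form the $G$-$CW$-complex
\[
  X = \bigl(G \times_{G_1} \EGF{G_1}{\calc}\bigr) \cup \bigl(G \times_{G_0} \EGF{G_0}{\calc} \times [0,1]\bigr) \cup \bigl(G \times_{G_2} \EGF{G_2}{\calc}\bigr),
\]
where the cylinder is glued at $\{0\}$ to the first summand via the map induced by $j_1$ and at $\{1\}$ to the third via the map induced by $j_2$ (composed with the canonical $G_k$-maps $\EGF{G_0}{\calc} \to \EGF{G_k}{\calc}$ coming from $\calf_{\calc}(G_0) \subseteq j_k^{-1}\calf_{\calc}(G_k)$). By construction, every isotropy subgroup of $X$ lies in $\calc$.

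Second I would verify that $X$ is a model for $\EGF{G}{\calc}$ by showing that $X^H$ is contractible for every $H \in \calc$. Let $T$ be the Bass--Serre tree of $G \cong G_1 \ast_{G_0} G_2$, with vertex set $G/G_1 \sqcup G/G_2$ and edge set $G/G_0$. There is a $G$-equivariant projection $\pi\colon X \to T$ that collapses each factor $\EGF{G_k}{\calc}$ to a point. Since $H$ is finite, $T^H$ is a non-empty subtree of $T$ and hence contractible. Over a vertex $gG_k \in T^H$ (so $g^{-1}Hg \subseteq G_k \in \calc$ by closure under subgroups and conjugation), the fibre of $\pi^H$ is homeomorphic to $\EGF{G_k}{\calc}^{g^{-1}Hg}$, which is contractible by the defining property; similarly over $H$-fixed edges. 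Hence $\pi^H \colon X^H \to T^H$ has contractible fibres over each simplex, so $X^H$ is contractible.

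Third, I would apply the Mayer--Vietoris sequence for the pushout decomposition of $X$: by homotopy invariance, the long exact sequence of a pair, and excision, one obtains
\[
  \cdots \to \calh^G_n\bigl(G \times_{G_0} \EGF{G_0}{\calc}\bigr) \to \calh^G_n\bigl(G \times_{G_1} \EGF{G_1}{\calc}\bigr) \oplus \calh^G_n\bigl(G \times_{G_2} \EGF{G_2}{\calc}\bigr) \to \calh^G_n(X) \to \cdots.
\]
Finally, since each $i_k \colon G_k \to G$ is injective, $\ker(i_k) = \{1\}$ acts freely on $\EGF{G_k}{\calc}$, so the bijectivity clause of the induction structure gives
\[
  \ind_{i_k} \colon \calh^{G_k}_n(\EGF{G_k}{\calc}) \xrightarrow{\cong} \calh^G_n\bigl((i_k)_* \EGF{G_k}{\calc}\bigr) = \calh^G_n\bigl(G \times_{G_k} \EGF{G_k}{\calc}\bigr),
\]
and analogously for $G_0$. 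Under these identifications, functoriality and compatibility with conjugation in Definition~\ref{def:equivariant_homology_theory} rewrite the maps in the above sequence as $(j_1)_n \times -(j_2)_n$ and $(i_1)_n \oplus (i_2)_n$ respectively, giving the claimed long exact sequence. The main obstacle is step two, the verification that $X^H$ is contractible for $H \in \calc$; once this is established, the remaining steps are formal consequences of the axioms of an equivariant homology theory.
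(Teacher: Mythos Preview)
Your argument is correct, and it uses the same essential ingredient as the paper (the Bass--Serre tree $T$ and its $G$-pushout decomposition), but the paper packages things more economically. Rather than building a new model $X$ as a tree of spaces from the $\EGF{G_k}{\calc}$, the paper simply takes the product $T \times \EGF{G}{\calc}$ with a fixed model for $\EGF{G}{\calc}$. The pushout for $T$ then yields a pushout with pieces $G/G_k \times \EGF{G}{\calc} \cong (i_k)_*\,i_k^*\EGF{G}{\calc}$, and $i_k^*\EGF{G}{\calc}$ is automatically a model for $\EGF{G_k}{\calc}$; the projection $T \times \EGF{G}{\calc} \to \EGF{G}{\calc}$ is a $G$-homotopy equivalence because for $H \in \calc$ the fixed set $T^H$ is a non-empty subtree. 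This completely sidesteps your step two: no fibrewise analysis of $X^H \to T^H$ is needed, and the only nontrivial input is the classical fact that finite subgroups of an amalgam fix subtrees. Your route, on the other hand, has the virtue of producing an explicit model for $\EGF{G}{\calc}$ built from chosen models for the $\EGF{G_k}{\calc}$, which is of independent interest; just be aware that ``contractible fibres over each simplex'' does not by itself imply the total space is contractible --- you need the (easy) extra step that a tree of contractible spaces is contractible, e.g.\ by inductively collapsing edges.
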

  \begin{proof}
  There is a $1$-dimensional $G$-$CW$-complex $T$ whose underlying space is a tree
such that the $1$-skeleton is obtained from the $0$-skeleton by the $G$-pushout
\[
\xymatrix{G/G_0 \times S^0  \ar[r]^q \ar[d]
  &
  G/G_1 \amalg G/G_2 \ar[d]
\\
G/G_0 \times D^1 \ar[r]
&
T
}
\]
where $q$ is the disjoint union of the canonical projections $G/G_0 \to G/G_1$ and
$G/G_0 \to G/G_2$, see~\cite[Theorem~7 in~\S 4.1 on page~32]{Serre(1980)}.  If we take the
cartesian product with $\EGF{G}{\calc}$ we obtain another cellular $G$-pushout.  Its
associated Mayer--Vietoris sequence yields the long exact sequence
\begin{multline*}
    \cdots\to  \calh^{G}_n(G/G_0 \times \EGF{G}{\calc}) \to
    \calh^{G}_n(G/G_1 \times \EGF{G}{\calc})  \oplus \calh^{G}_n(G/G_2 \times \EGF{G}{\calc})
    \\
    \to
    \calh^{G}_n(T \times \EGF{G}{\calc})  \to  \calh^{G}_{n-1}(G/G_0 \times \EGF{G}{\calc}) \to \cdots.
  \end{multline*}
  There is a $G$-homeomorphism
  $(j_k)_*j_k^* \EGF{G}{\calc} \xrightarrow{\cong} G/G_k \times \EGF{G}{\calc}$,
  where $j_k^* \EGF{G}{\calc}$ is the restriction of $\EGF{G}{\calc}$ to $G_k$ by
  $j_k$. Obviously $ j_k^* \EGF{G}{\calc}$ is a model for $\EGF{G_k}{\calc}$. Using the
  induction structure of the equivariant homology theory $\calh_*^{?}$ we obtain
  identifications for $k = 0,1,2$
  \[
  \calh^{G_k}_*(\EGF{G_k}{\calc}) \xrightarrow{\cong} \calh^G_*(G/G_k \times \EGF{G}{\calc}).
  \]
  The $K$-fixed point set $T^K$ is a non-empty subtree and hence contractible for every
  finite subgroup $K \subseteq G$,
  see~\cite[Theorem~15 in~6.1 on page~58 and~6.3.1 on  page~60]{Serre(1980)}.
  Hence the projection   $T \times \EGF{G}{\calc} \to \EGF{G}{\calc}$ is a $G$-homotopy equivalence
  since every element in $\calc$ is finite by assumption. Hence we get an identification
  \[
  \calh^{G}_*(\EGF{G}{\calc}) \xrightarrow{\cong} \calh^G_*(T \times \EGF{G}{\calc}).
  \]
  Now we obtain the desired long exact sequence from the last long exact sequence and the
  identifications above.
\end{proof}

%%%%%%%%%%%%%%%%%%%%%%%%%%%%%%%%%%%%%%%%%%%%%%%%%%%%%%%%%%%%%%%%%%%%%
%%%%%%%%%%%%%%%%%%%%%%%%%%%%%%%%%%%%%%%%%%%%%%%%%%%%%%%%%%%%%%%%%%%%%
%%%%%%%%%%%%%%%%%%%%%%%%%%%%%%%%%%%%%%%%%%%%%%%%%%%%%%%%%%%%%%%%%%%%%

\typeout{------------- Graph product of groups ----------------------}

\section{Graph products of groups}
\label{sec:Graph_product_of_groups}

In this section we give the definition of a graph product of groups. We show that the value of an equivariant homology theory on the classifying space of a graph product is the colimit over a certain system of subgroups.

Let $X$  be a finite simplicial graph on the vertex set $V$  and suppose that we are
given a collection of groups $\calw \coloneqq \{W_v \mid v \in V\}$.
Then the \emph{graph product} $W(X,\calw)$ is defined as the quotient of
 the free product $\ast_{v \in V} W_v$ of the collection of groups $\calw$  by introducing the relations
\[
  \{[g,g'] = 1\mid v, v' \in V, \;\text{there is an edge joining} \;v \; \text{and} \;v', g \in W_v, g' \in W_{v'}
  \}.
  \]
In other words, elements of subgroups $W_v$ and $W_{v'}$ commute if there is an
edge joining $v$ and $v'$. This notion is due to Green~\cite{Geen(1990)}.

Let $\Sigma$ be the flag complex associated to  $X$. Denote by $\cals = \cals(\Sigma)$ the poset of flag subcomplexes of $\Sigma$, ordered by inclusion, where we also allow the empty
subcomplex.  Then we can assign to $L \in \cals$ the graph product group
$W(X \cap L,\calw|_{V\cap L})$, where $X \cap L$ agrees with  the $1$-skeleton of $L$ and
$\calw|_{V\cap L} = \{W_v \mid v \in V \cap L\}$ is the restriction of $\calw$ to the
vertices in $L$.  Consider $L_0, L_1 \in \cals$ with $L_0 \le L_1$.  Then we
obtain group homomorphisms
\begin{eqnarray*}
  W_*(L_0 \le L_1) \colon W(L_0,\calw|_{V\cap L_0}) & \to & W(L_1,\calw|_{V\cap L_1});
  \\
  W^*(L_0 \le L_1) \colon W(L_1,\calw|_{V\cap L_1}) & \to & W(L_0,\calw|_{V\cap L_0})
\end{eqnarray*}
as follows. The morphism $W_*(L_0 \le L_1)$ is induced by the obvious inclusion
$\ast_{v \in V \cap L_0} W_v\to \ast_{v \in V \cap L_1 } W_v$, whereas the second one
is induced by the projection
$\ast_{v \in V \cap L_1} W_v \to \ast_{v \in V \cap L_0} W_v$ which is given on $W_v$
for $v \in V \cap L_1$ by the inclusion
$W_v \to \ast_{v \in V \cap L_0} W_v$ if $v \in L_0$, and by the trivial homomorphism
if $v \notin L_0$.  One easily checks that thus we obtain a covariant functor
\begin{equation*}
W_* \colon \cals \to \Groups
\end{equation*}
and a contravariant functor
\begin{equation*}
W^* \colon \cals \to \Groups.
\end{equation*}
By construction $W_*$ and $W^*$ agree on objects and we write
\[
  W(L) \coloneqq W_*(L) = W^*(L) \coloneqq W(X \cap L,\calw|_{V \cap L})
\]
for an object $L\in \cals$.

The elementary proof of the following lemma is left to the reader.

\begin{lemma}\label{lem:basic_properties_of_W_ast_and_W_upper_ast}\
\begin{enumerate}

\item\label{lem:basic_properties_of_W_ast_and_W_upper_ast:amalgamation}

  Let $L_0, L_1,L_2, L \in \cals$ be elements such that $L = L_1 \cup L_2$ and $L_0 = L_1 \cap L_2$.
  Then we obtain a group isomorphism
  \[
   W_*(L_1 \le L) \ast_{W_*(L_0 \le L)} W_*(L_2 \le L) \colon W(L_1) \ast_{W(L_0)} W(L_2) \xrightarrow{\cong} W(L);
  \]

\item\label{lem:basic_properties_of_W_ast_and_W_upper_ast:Mackey}
  Let
  $L_1,L_2,L \in \cals$ be elements satisfying $L_1 \le L$ and $L_2 \le L$.
  Then we get an equality of group homomorphisms $W(L_1) \to W(L_2)$
  \[
  W^*(L_2 \le L) \circ W_*(L_1 \le L) = W_*((L_1 \cap L_2) \le L_2) \circ W^*((L_1 \cap L_2) \le L_1).
  \]
\end{enumerate}
\end{lemma}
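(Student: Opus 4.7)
The plan is to prove both assertions by exploiting the universal properties of free products with amalgamation and of graph products, reducing everything to checks on the generating subgroups $W_v$.

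For assertion~\ref{lem:basic_properties_of_W_ast_and_W_upper_ast:amalgamation}, I will construct mutually inverse homomorphisms. In one direction, the universal property of the amalgamated product yields a homomorphism
\[
\Phi \colon W(L_1) \ast_{W(L_0)} W(L_2) \to W(L)
\]
because the two composites $W(L_0) \to W(L_k) \to W(L)$ for $k=1,2$ agree (they are both induced by the inclusion of generators $V \cap L_0 \hookrightarrow V \cap L$). In the other direction, I will use the universal property of the graph product $W(L)$. For each vertex $v \in V \cap L$, since $L = L_1 \cup L_2$, we have $v \in L_1$ or $v \in L_2$, so $W_v$ maps into the corresponding factor $W(L_k)$ and hence into $W(L_1) \ast_{W(L_0)} W(L_2)$; if $v$ lies in both $L_1$ and $L_2$, the two possibilities agree by the amalgamation over $W(L_0)$. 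The key point is then verifying the commutator relations: whenever $v, v' \in V \cap L$ are joined by an edge in $L$, the edge (being a $1$-simplex of $L = L_1 \cup L_2$) lies entirely in $L_1$ or in $L_2$, so both endpoints lie in the same $L_k$, and the desired commutator $[g,g']=1$ already holds inside $W(L_k)$, hence inside the amalgamated product. Thus a well-defined homomorphism $\Psi \colon W(L) \to W(L_1) \ast_{W(L_0)} W(L_2)$ exists, and $\Phi$, $\Psi$ are inverse to each other since they are inverse on the generating subgroups $W_v$.

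For assertion~\ref{lem:basic_properties_of_W_ast_and_W_upper_ast:Mackey}, both sides are group homomorphisms $W(L_1) \to W(L_2)$, so by the universal property of $W(L_1)$ it suffices to verify the equality after restriction to each $W_v$ for $v \in V \cap L_1$. Unwinding the definitions of $W_*$ (inclusion of factors) and $W^*$ (identity on $W_v$ when the vertex is retained, trivial otherwise), the left hand side restricted to $W_v$ is the identity inclusion $W_v \hookrightarrow W(L_2)$ if $v \in L_2$, i.e.\ if $v \in L_1 \cap L_2 = L_0$, and is trivial otherwise. The right hand side first applies $W^*((L_1 \cap L_2) \le L_1)$, which is the identity on $W_v$ if $v \in L_0$ and trivial otherwise, and then includes via $W_*((L_1 \cap L_2) \le L_2)$; the outcome on $W_v$ is the same.

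The only genuinely non-formal step is the commutator check in~\ref{lem:basic_properties_of_W_ast_and_W_upper_ast:amalgamation}, where I must use that $L_1, L_2$ are subcomplexes (so an edge of $L$ belongs to one of them whenever it lies in their union) rather than arbitrary subsets of vertices; everything else reduces to a bookkeeping verification on the generators $W_v$.
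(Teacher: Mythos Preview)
Your proof is correct. The paper itself does not supply a proof of this lemma at all---it simply states that ``the elementary proof of the following lemma is left to the reader''---so there is no detailed argument to compare against; your approach via the universal properties of amalgamated products and graph products, reducing both assertions to checks on the generating subgroups $W_v$, is exactly the kind of straightforward verification the authors had in mind.
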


\begin{remark}\label{rem:retratcions}
Notice that in particular we get from
Lemma~\ref{lem:basic_properties_of_W_ast_and_W_upper_ast}~%
\ref{lem:basic_properties_of_W_ast_and_W_upper_ast:Mackey} that for any two elements $L_0$
and $L_1$ in $\cals$ with $L_0 \le L_1$ the composite
$W^*(L_0 \le L_1) \circ W_*(L_0 \le L_1)$ is the identity on $W(L_0)$ and hence
$W_*(L_0 \le L_1)$ is split injective and $W^*(L_0 \le L_1)$ is split surjective.
\end{remark}

Let $\calh^?_*$ be an equivariant homology theory with values in $\Lambda$-modules.
Let $\calc$ be a non-empty class of groups which is closed under isomorphisms, passage to subgroups
and passage to quotient groups. We have defined a covariant functor
$\calh^{\calc}_*$ and studied its main properties in
Section~\ref{sec:Equivariant_homology_theories}.

We want to study the covariant $\Lambda\cals$-module
\[
  \calh_n^{\calc} \circ W_* \colon \cals \to \Modcat{\Lambda}, \quad
L \mapsto \calh_n^{W(L)}(\EGF{W(L)}{\calc})
\]
and are in particular interested in its value at $\Sigma$ itself.

Viewing a simplex as a flag subcomplex yields for every $L \in \cals$ a map of posets
\[
I_{L} \colon \calp(L) \to \cals(L).
\]
For two elements $L$ and $L'$ in $\cals$ with $L \le L'$ let 
\begin{eqnarray*}
  J^{\calp}(L \le L')\colon \calp(L)  &\to & \calp(L');
  \\
  J^{\cals}(L \le L') \colon \cals(L)  &\to & \cals(L')
\end{eqnarray*}                                    
be the maps of posets induced by the inclusion $L \subseteq L'$.
Define the  $\Lambda$-module
\[C_n(L) \coloneqq \colim_{\calp(L)} \calh_n^{\calc} \circ W_* \circ  J^{\cals}(L \le \Sigma) \circ I_L
\]
to be the colimit of the covariant functor
$\calh_n^{\calc} \circ W_* \circ  J^{\cals}(L \le \Sigma) \circ I_L \colon \calp(L) \to \Modcat{\Lambda}$.
Given elements $L$ and $L'$ in $\cals$ with $L \le L'$,
we obtain a map of $\Lambda$-modules
\[C_n(L \le L') \colon C_n(L) \to C_n(L')
\]
from $J^{\calp}(L \le L')$ because  of $J^{\cals}(L' \le \Sigma) \circ I_{L'} \circ J^{\calp}(L \le L')= J^{\cals}(L \le \Sigma) \circ I_{L}$.
One easily checks that thus we obtain a covariant $\Lambda\cals$-module
\[
  C_n \colon \cals \to \Modcat{\Lambda}, \quad L \mapsto C_n(L).
\]
For every object $L\in\cals$ there is an obvious $\Lambda$-homomorphism
\[
  T_n(L) \colon C_n(L) = \colim_{\calp(L)} \calh_n^{\calc} \circ W_* \circ  J^{\cals}(L \le \Sigma) \circ I_L \to
  \calh_n^{\calc} \circ W_*(L)
\]
coming from the various $\Lambda$-maps
$\calh_n^{\calc} \circ W_*(\sigma) \to \calh_n^{\calc} \circ W_*(L)$
induced by the inclusions $I_L(\sigma) \subseteq L$ for $\sigma$
running through the simplices of $L$.  One easily checks that the collection of the $\Lambda$-homomorphisms $T_n(L)$ fits together to a map
of covariant $\Lambda\cals$-modules
\begin{equation}
  T_n \colon C_n \to \calh_n^{\calc} \circ W_*.
  \label{T:C_n_to_calh_n_upper_calc_circ_W_ast}
\end{equation}

\begin{theorem}
\label{Computing_calh_upper_W(Sigma)(EGF(W(Sigma))(calc)}
Suppose that each element in $\calc$ is a finite group.

Then the map of $\Lambda\cals$-modules $T_n$
of~\eqref{T:C_n_to_calh_n_upper_calc_circ_W_ast} is an isomorphism.  In
particular its evaluation at $\Sigma$ yields a $\Lambda$-isomorphism
  \[T_n(\Sigma) \colon \colim_{\sigma \in \calp} \calh_n^{W(\sigma)}(\EGF{W(\sigma)}{\calc})
  \xrightarrow{\cong} \calh^{W(\Sigma)}_n(\EGF{W(\Sigma)}{\calc}).
  \]
\end{theorem}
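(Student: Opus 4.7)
The plan is to prove by induction on $|V \cap L|$ that $T_n(L)$ is an isomorphism for every $L \in \cals$; specialising to $L = \Sigma$ will then give the second assertion, and pointwise isomorphism is equivalent to $T_n$ being an isomorphism of $\Lambda\cals$-modules. If $L$ is a single simplex (including the empty simplex), then this simplex is the maximum element of $\calp(L)$, so the colimit defining $C_n(L)$ collapses to $\calh_n^{\calc}(W(L))$ and $T_n(L) = \id$. This handles the base case $L = \emptyset$ as well as every case encountered during the induction where $L$ happens to be a single simplex.

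For the inductive step, assume $L \in \cals$ is not a simplex and that $T_n(L')$ is an isomorphism for every $L' \in \cals$ with strictly fewer vertices than $L$. Since $L$ is a flag complex that is not itself a simplex, there exist two vertices $v, w \in V \cap L$ not joined by an edge of $X$. Let $L_1$ and $L_2$ be the full flag subcomplexes of $L$ on the vertex sets $(V \cap L) \setminus \{v\}$ and $(V \cap L) \setminus \{w\}$ respectively, and put $L_0 \coloneqq L_1 \cap L_2$. Non-adjacency of $v$ and $w$ implies that no simplex of $L$ contains both, so $L = L_1 \cup L_2$, and each of $L_0,L_1,L_2$ has strictly fewer vertices than $L$. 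By Lemma~\ref{lem:basic_properties_of_W_ast_and_W_upper_ast}\ref{lem:basic_properties_of_W_ast_and_W_upper_ast:amalgamation}, $W(L) \cong W(L_1) \ast_{W(L_0)} W(L_2)$, and since the assumption on $\calc$ validates its hypothesis, Lemma~\ref{lem:amalgamated_products} delivers a Mayer--Vietoris long exact sequence. By Remark~\ref{rem:retratcions} each homomorphism $W(L_0) \to W(L_i)$ is split injective, hence $(j_1)_n \times -(j_2)_n$ admits the first-coordinate retraction and is split injective in every degree. The Mayer--Vietoris sequence therefore degenerates into short exact sequences, which is equivalent to saying that the canonical map
\[
  \calh_n^{\calc}(W(L_1)) \amalg_{\calh_n^{\calc}(W(L_0))} \calh_n^{\calc}(W(L_2)) \xrightarrow{\;\cong\;} \calh_n^{\calc}(W(L))
\]
is an isomorphism.

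On the source side, non-adjacency of $v$ and $w$ likewise gives $\calp(L) = \calp(L_1) \cup \calp(L_2)$ with intersection $\calp(L_0)$, and it excludes any order relation $\sigma \le \tau$ with $\sigma \in \calp(L_1) \setminus \calp(L_0)$ and $\tau \in \calp(L_2) \setminus \calp(L_0)$, since such a $\tau$ would have to contain both $v$ and $w$. Hence $\calp(L)$ is the pushout of $\calp(L_1) \leftarrow \calp(L_0) \rightarrow \calp(L_2)$ in the category of small categories, and commutation of colimits produces an isomorphism $C_n(L) \cong C_n(L_1) \amalg_{C_n(L_0)} C_n(L_2)$. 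Naturality of $T_n$ then yields a commutative square whose horizontal arrows are these two pushout comparison isomorphisms and whose left vertical arrow is induced by $T_n(L_0), T_n(L_1), T_n(L_2)$; all three are isomorphisms by the inductive hypothesis, so the right vertical arrow $T_n(L)$ is an isomorphism and the induction is complete.

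The main technical point is the identification of $\calp(L)$ as a poset pushout of $\calp(L_1) \leftarrow \calp(L_0) \rightarrow \calp(L_2)$, since this is what allows the source functor $C_n$ to inherit exactly the same amalgamation structure as the target $\calh_n^{\calc} \circ W_*$. The two-vertex decomposition using a non-adjacent pair $\{v,w\}$, rather than a star/link decomposition at a single vertex, is precisely what simultaneously forces a strict vertex reduction in all three pieces $L_0, L_1, L_2$ and rules out any order relations that cross between the two halves of the decomposition.
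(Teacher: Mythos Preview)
Your proof is correct and follows the same inductive strategy as the paper: induction on the number of vertices, reducing via an amalgamated-product decomposition of $W(L)$ together with a matching pushout decomposition of the colimit $C_n(L)$, and concluding by the Five-Lemma (or equivalently by comparing pushouts). The only difference is the choice of covering subcomplexes. The paper takes $L_1$ to be the closed star of a vertex $v_1$ (the full subcomplex on $v_1$ and its neighbours), $L_0$ its link, and $L_2$ the full subcomplex on $V\cap L \setminus\{v_1\}$; the existence of a second vertex $v_2$ not adjacent to $v_1$ is then used only to guarantee that $L_1$ has strictly fewer vertices than $L$. You instead take the symmetric decomposition $L_i = $ full subcomplex on $V\cap L$ minus one of the two non-adjacent vertices. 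Both choices satisfy $L = L_1\cup L_2$, $L_0 = L_1\cap L_2$, give strict vertex reduction in all three pieces, and have the crucial property that every relation $\sigma\le\tau$ in $\calp(L)$ lies entirely in $\calp(L_1)$ or entirely in $\calp(L_2)$, so the colimit over $\calp(L)$ is the pushout of the colimits over the pieces. Your closing remark slightly undersells the star/link decomposition: it works equally well for the same reasons, as the paper's proof shows.
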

\begin{proof}
	Notice that $T_n(L)$ is obviously an isomorphism if $L$ lies in the image of $I \colon \calp \to \cals$, since then $L$ is a terminal object in $\calp(L)$ and hence under the obvious identification $C_n(L) = \colim_{\calp(L)} \calh_n^{\calc} \circ W_* \circ J^{\cals}(L \le \Sigma) \circ I_L \cong \calh_n^{\calc} \circ W_*(L)$ the $\Lambda$-homomorphism $T_n(L)$ becomes the identity.
  
	We show for any $L \in \cals$ that $T_n(L)$ is an isomorphism by induction over the number of vertices of $L$. 
	If $L$ is the empty subcomplex, then $L$ is in the image of $I \colon \calp \to \cals$ and the claim has already been proved.
	The induction step is done as follows. We only have to consider the case, where $L$ is not in the image of $I \colon \calp \to \cals$. Since $L$ is a flag complex, there must be two vertices $v_1$ and $v_2$ in $L$ which are not connected by an edge. Let $L_1$ be the flag subcomplex of $L$ spanned by $v_1$ and all vertices in $L$ which are connected to $v_1$ by an edge. In particular $v_2$ is not a vertex of $L_1$. Let $L_0$ be the flag subcomplex of $L$ which is spanned by all vertices $v$ for which there exists an edge whose terminal points are $v$ and $v_1$. Notice that $v_1$ does not belong to $L_0$ and $L_1$ is the cone over $L_0$ with cone point $v_1$. Let $L_2$ be the flag subcomplex of $L$ spanned by all vertices except $v_1$. Then $L = L_1 \cup L_2$ and $L_0 = L_1 \cap L_2$ and the number of vertices of $L_0$, $L_1$ and $L_2$ is smaller than the number of vertices of $L$. The induction hypothesis applies to $L_k$ and hence $T_n(L_k)$ is an isomorphism for $k = 0,1,2$. 
	Since $\calp(L) = \calp(L_1) \cup \calp(L_2)$ and $\calp(L_0) = \calp(L_1) \cap \calp(L_2)$, the sequence induced by the inclusions $\calp(L_0) \subseteq \calp(L_k)$ and $\calp(L_k) \subseteq \calp(L)$ for $k = 1,2$
  	\[
  		C_n(L_0) \to  C_n(L_1) \oplus C_n(L_2) \to C_n(L) \to 0
  	\]
  	is exact. Since the group homomorphism $W_*(L_0) \to W_*(L_1)$ is split injective, a retraction is given by $W^*(L_1) \to W^*(L_0)$, the $\Lambda$-homomorphism $\calh^{\calc}_n\circ W_*(L_0 \le L_1) \colon \calh^{\calc}_n\circ W_*(L_0) \to \calh^{\calc}_n\circ W_*(L_1)$ is split injective by functoriality. We get from Lemma~\ref{lem:amalgamated_products} and Lemma~\ref{lem:basic_properties_of_W_ast_and_W_upper_ast}~%
\ref{lem:basic_properties_of_W_ast_and_W_upper_ast:amalgamation}
  	an exact sequence
  \[
    \calh_n^{\calc} \circ W_*(L_0) \to   \calh_n^{\calc} \circ W_*(L_1) \oplus  \calh_n^{\calc} \circ W_*(L_2)
    \to  \calh_n^{\calc} \circ W_*(L) \to 0.
\]
One easily checks that we obtain a commutative diagram with exact rows
\[
  \xymatrix{C_n(L_0) \ar[r] \ar[d]^{T_n(L_0)}_{\cong}
 &
 C_n(L_1) \oplus C_n(L_2) \ar[r]  \ar[d]^{T_n(L_1) \oplus T_n(L_0)}_{\cong} &
 C_n(L) \ar[r] \ar[d]^{T_n(L)}
 &
 0
 \\
\calh_n^{\calc} \circ W_*(L_0) \ar[r] 
 &
 \calh_n^{\calc} \circ W_*(L_1) \oplus\calh_n^{\calc} \circ W_*(L_2) \ar[r]   &
 \calh_n^{\calc} \circ W_*(L) \ar[r] 
 &
 0
}
\]
Now the induction step follows from the Five-Lemma.
\end{proof}

We have proven part~\ref{the:main_theorem:iso_T} of the main Theorem~\ref{the:main_theorem}.

 %%%%%%%%%%%%%%%%%%%%%%%%%%%%%%%%%%%%%%%%%%%%%%%%%%%%%%%%%%%%%%%%%%%%%
%%%%%%%%%%%%%%%%%%%%%%%%%%%%%%%%%%%%%%%%%%%%%%%%%%%%%%%%%%%%%%%%%%%%%
%%%%%%%%%%%%%%%%%%%%%%%%%%%%%%%%%%%%%%%%%%%%%%%%%%%%%%%%%%%%%%%%%%%%%

\typeout{------------- Mackey modules ----------------------}

\section{Mackey modules}
\label{sec:Mackey_modules}

In this section we prove the remaining parts of the main Theorem~\ref{the:main_theorem}. More generally, we show that the colimit of any Mackey module splits as a direct sum over its index category.

Let $\Sigma$ be a finite simplicial complex. Denote by $\calp = \calp(\Sigma)$ the  poset of its simplices ordered by inclusion, where we allow the empty simplex as well. The dimension of the empty simplex is defined to be $-1$.
Notice that for two elements $\sigma$ and $\tau$ in $\calp$ the intersection $\sigma \cap \tau$ is again an element in $\calp$ which is uniquely determined by the property that it is maximal among all those elements $\mu$ in $\calp$ satisfying both $\mu\le \sigma$ and $\mu \le \tau$.

Let $\Lambda$ be a commutative ring with unit. A \emph{Mackey $\Lambda\calp$-module } $M = (M_*,M^*)$ is a bifunctor
$\calp \to \Modcat{\Lambda}$, i.e., a covariant functor $M_*$ and a contravariant
functor $M^*$ from $\calp$ to $\Modcat{\Lambda}$ such that $M_*$ and $M^*$ agree
on objects and for objects $\sigma_1$, $\sigma_2$, $\tau$ of $\calp$
satisfying $\sigma_k \le \tau$ for $k = 1,2$, we get
\begin{eqnarray}
  M^*(\sigma_2 \le \tau) \circ M_*(\sigma_1 \le \tau)
  & = &
 M_*(\sigma_1 \cap \sigma_2  \le \sigma_2) \circ M^*(\sigma_1 \cap \sigma_2  \le \sigma_1).
\label{double_coset_formula}
\end{eqnarray}
The name Mackey $\Lambda\calp$-module comes from the analogy to the classical notion of a
Mackey functor, where~\eqref{double_coset_formula} replaces the double coset formula,
see~\cite[Section~6.1]{Dieck(1979)}.
for an object $\tau$ of $\calp$.

\begin{example}[Mackey modules coming from graph products]
\label{exa:Mackey_modules_coming_from_graph_products}
	Our main example comes from Section~\ref{sec:Graph_product_of_groups}.
	Let $X$  be a finite simplicial graph on the vertex set $V$  and suppose that we are given a collection of groups $\calw \coloneqq \{W_v \mid v \in V\}$. Let $F \colon \Groups \to \Modcat{\Lambda}$ be a covariant functor, e.g.\ the functor $\calh^\calc_*$ defined in~\eqref{calh_upper_calc_n}. Define $M_* = F \circ W_* \circ I$ and $M^* = F \circ W^* \circ I$. Then the pair $(M_*,M^*)$ defines a Mackey $\Lambda\calp$-module by Lemma~\ref{lem:basic_properties_of_W_ast_and_W_upper_ast}~%
	\ref{lem:basic_properties_of_W_ast_and_W_upper_ast:Mackey}.
\end{example}

Fix elements $\tau$ in $\calp$ and $d \in \{-2,-1,0,1,2, \ldots\}$.
Consider a covariant $\Lambda\calp$-module $N$, i.e., a covariant functor
$N \colon \calp \to \Modcat{\Lambda}$.  Define the
$\Lambda$-submodules $L^d_{\tau}N$ and $L_{\tau} N$ of $N(\tau)$ to be the
images of the maps
\[
  \bigoplus_{\substack{\sigma \in \calp, \sigma < \tau\\\dim(\sigma) \le d}} N(\sigma \le \tau)  \colon
\bigoplus_{\substack{\sigma \in \calp, \sigma <\tau \\\dim(\sigma) \le d}} N(\sigma) \to N(\tau)
\]
and
\[
\bigoplus_{\sigma \in \calp, \sigma <\tau} N(\sigma\le \tau) \colon \bigoplus_{\sigma \in \calp, \sigma < \tau} N(\sigma)
\to N(\tau),
\]
respectively. Define $\Lambda$-quotient modules of $N(\tau)$ by
\begin{eqnarray*}
S^d_{\tau} N & = & N(\tau)/L^d_{\tau}(N);
  \\
S_{\tau} N & = & N(\tau)/L_{\tau}(N).
\end{eqnarray*}
Then we obtain a sequence of inclusions of $\Lambda$-modules
\[
\{0\} = L^{-2}_{\tau}N\subseteq L^{-1}_{\tau}N\subseteq  L^{0}_{\tau}N \subseteq L^{1}_{\tau}N
\subseteq \cdots \subseteq L^{\dim(\tau)-1}_{\tau}N = L_{\tau}N,
\]
and a sequence of epimorphisms of $\Lambda$-modules
\[
N(\tau) = S^{-2}_{\tau} N \to  S^{-1}_{\tau} N \to  S^{0}_{\tau} N  \to S^{1}_{\tau} N \to \cdots
\to S^{\dim(\tau) -1}_{\tau} N  = S_{\tau} N.
\]
Note that $L^{-1}_{\tau}N = \im\bigl(N(\emptyset) \to N(\tau)\bigr)$ and $S^{-1}_{\tau}N = \cok\bigl(N(\emptyset) \to N(\tau)\bigr)$
for $\tau \not= \emptyset$ and $L^{-1}_{\emptyset}N = \{0\}$ and $S^{-1}_{\emptyset}N = N(\emptyset)$.

Consider a Mackey $\Lambda\calp$-module $M = (M_*,M^*)$. Define a $\Lambda$-homomorphism
\[s^d_{\tau} \colon M_*(\tau) \to M_*(\tau)
\]
by
\[s^d_{\tau} \coloneqq \id _{M_*(\tau)} -\sum_{\substack{\mu \in \calp, \mu< \tau\\ \dim(\mu) = d}}
M_*(\mu \le \tau) \circ M^*(\mu \le \tau).
\]
\begin{lemma}
We have for $d \in \{-1,0,1,2, \ldots\}$
\[
  s^d_{\tau}\bigl(L^d_{\tau} M_*\bigr) \subseteq L^{d-1}_{\tau} M_*.
\]
\label{main_property_of_s_upper_d}
\end{lemma}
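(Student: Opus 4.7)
The plan is to verify the inclusion on generators of $L^d_{\tau} M_*$ and use the Mackey formula~\eqref{double_coset_formula} to match each term of $s^d_{\tau}$ with something landing in $L^{d-1}_{\tau} M_*$. Since $L^d_{\tau} M_*$ is by definition spanned (as a $\Lambda$-module) by elements of the form $x = M_*(\sigma \le \tau)(y)$ with $\sigma \in \calp$, $\sigma < \tau$, $\dim(\sigma) \le d$, and $y \in M_*(\sigma)$, it suffices to show $s^d_{\tau}(x) \in L^{d-1}_{\tau} M_*$ for such an $x$.

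The central calculation is the following. For each $\mu \in \calp$ with $\mu < \tau$ and $\dim(\mu) = d$, the Mackey identity~\eqref{double_coset_formula} applied with $\sigma_1 = \sigma$, $\sigma_2 = \mu$ gives
\[
M_*(\mu \le \tau) \circ M^*(\mu \le \tau)(x)
= M_*(\mu \le \tau) \circ M_*(\sigma \cap \mu \le \mu) \circ M^*(\sigma \cap \mu \le \sigma)(y),
\]
which by functoriality of $M_*$ equals
\[
M_*(\sigma \cap \mu \le \tau) \circ M^*(\sigma \cap \mu \le \sigma)(y).
\]
Note that $\sigma \cap \mu \le \sigma < \tau$, so the outer map $M_*(\sigma \cap \mu \le \tau)$ is one of the maps defining $L^?_{\tau} M_*$.

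Next I split according to whether $\dim(\sigma) \le d - 1$ or $\dim(\sigma) = d$. In the first case, $x$ itself is already in $L^{d-1}_{\tau} M_*$, and since $\dim(\sigma \cap \mu) \le \dim(\sigma) \le d - 1$, every term in the sum defining $s^d_{\tau}(x) - x$ lies in $L^{d-1}_{\tau} M_*$, so we are done. In the second case, the index $\mu = \sigma$ appears in the sum (it satisfies $\mu < \tau$ and $\dim(\mu) = d$); the corresponding term simplifies, via $\sigma \cap \sigma = \sigma$ and $M^*(\sigma \le \sigma) = \id$, to $M_*(\sigma \le \tau)(y) = x$, which cancels the leading $x$ in $s^d_{\tau}(x)$. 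For every other $\mu$ with $\dim(\mu) = d$ and $\mu \ne \sigma$, the two $d$-simplices $\sigma$ and $\mu$ cannot contain one another, so $\sigma \cap \mu < \sigma$ and hence $\dim(\sigma \cap \mu) \le d - 1$; together with $\sigma \cap \mu < \tau$, this shows the term lies in $L^{d-1}_{\tau} M_*$.

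The only real obstacle is bookkeeping: convincing oneself that $\mu = \sigma$ is truly the unique index producing the cancellation, and that the remaining indices produce faces of strictly lower dimension. The edge case $d = -1$ is handled identically, with $\mu = \emptyset$ the only possible index; then the Mackey formula collapses to $M^*(\emptyset \le \tau) \circ M_*(\emptyset \le \tau) = \id_{M_*(\emptyset)}$, so $s^{-1}_{\tau}(x) = 0 \in L^{-2}_{\tau} M_* = \{0\}$, as required.
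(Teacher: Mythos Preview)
Your proof is correct and follows essentially the same route as the paper: apply the Mackey identity~\eqref{double_coset_formula} to rewrite each summand of $s^d_{\tau}\circ M_*(\sigma\le\tau)$ as $M_*(\sigma\cap\mu\le\tau)\circ M^*(\sigma\cap\mu\le\sigma)$, observe that the unique index $\mu=\sigma$ cancels the leading term when $\dim(\sigma)=d$, and note that all remaining terms factor through simplices of dimension $\le d-1$. Your explicit case split according to whether $\dim(\sigma)\le d-1$ or $\dim(\sigma)=d$ is in fact slightly more careful than the paper's presentation, which tacitly assumes $\dim(\sigma)=d$ when asserting that the relevant index set equals $\{\sigma\}$.
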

\begin{proof}
We compute for $\sigma \in \calp$ satisfying $\sigma < \tau$ and $\dim(\sigma) \le d$
\begin{eqnarray*}
  \lefteqn{s^d_{\tau}\circ M_*(\sigma \le \tau)}
  & &
   \\
  & = &
  M_*(\sigma \le \tau) - \sum_{\substack{\mu \in \calp, \mu < \tau\\ \dim(\mu) = d}}
  M_*(\mu \le \tau) \circ M^*(\mu \le \tau) \circ  M_*(\sigma \le \tau)
  \\
  & \stackrel{\eqref{double_coset_formula}}{=} &
  M_*(\sigma \le \tau) - \sum_{\substack{\mu \in \calp, \mu < \tau\\ \dim(\mu) = d}}
  M_*(\mu \le \tau) \circ   M_*(\mu \cap \sigma \le \mu) \circ M^*(\mu \cap \sigma \le \sigma)
  \\
  & = &
  M_*(\sigma \le \tau) - \sum_{\substack{\mu \in \calp, \mu < \tau\\ \dim(\mu) = d}}
  M_*(\mu \cap \sigma \le \tau ) \circ M^*(\mu \cap \sigma \le \sigma)
  \\
  & = &
  M_*(\sigma \le \tau) - \sum_{\substack{\mu \in \calp, \mu < \tau\\ \dim(\mu) = d, \dim(\mu \cap \sigma) =  d}}
  M_*(\mu \cap \sigma \le \tau ) \circ M^*(\mu \cap \sigma \le \sigma)
  \\
  & & \quad
  - \sum_{\substack{\mu \in \calp, \mu < \tau\\ \dim(\mu) = d, \dim(\mu \cap \sigma) \le d-1}}
 M_*(\mu \cap \sigma \le \tau ) \circ M^*(\mu \cap \sigma \le \sigma).
\end{eqnarray*}
Suppose that $\dim(\mu \cap \sigma) =  d$. Since $\dim(\mu) = d$ and $\dim(\sigma) \le  d$,
we conclude $\mu = \mu \cap \sigma =  \sigma$ and hence because of $\sigma < \tau$
\[
  \{\mu \in \calp \mid \mu < \tau, \dim(\mu) = d, \dim(\mu \cap \sigma) =  d\} = \{\sigma\}.
\]
This implies
\[M_*(\sigma \le \tau) - \sum_{\substack{\mu \in \calp, \mu < \tau\\ \dim(\mu) = d, \dim(\mu \cap \sigma) =  d}}
  M_*(\mu \cap \sigma \le \tau ) \circ M^*(\mu \cap \sigma \le \sigma) = 0,
\]
and hence
\[s^d_{\tau}\circ M_*(\sigma \le \tau)
  =
- \sum_{\substack{\mu \in \calp, \mu < \tau\\ \dim(\mu) = d, \dim(\mu \cap \sigma) \le d-1}}
M_*(\mu \cap \sigma \le \tau ) \circ M^*(\mu \cap \sigma \le \sigma).
\]

We conclude for every $\sigma \in \calp$ satisfying $\sigma < \tau$ and $\dim(\sigma) \le d$
\[
 \im\bigl(s^d_{\tau} \circ M_*(\sigma \le \tau)\bigr) \subseteq L^{d-1}_{\tau} M_*.
\]
Now the assertion follows.
\end{proof}

Define a map
\[s_{\tau} \colon M_*(\tau) \to  M_*(\tau)\]
to be the composite $s^{-1}_{\tau} \circ  s^0_{\tau}  \circ \cdots \circ s^{\dim(\tau)-1}_{\tau}$.
Then we conclude by induction from Lemma~\ref{main_property_of_s_upper_d}
that $s_{\tau}$ restricted to $L_{\tau} M_*$ is trivial and hence induces a $\Lambda$-homomorphism
\begin{equation*}
 %\label{overline(s)}
\overline{s}_{\tau} \colon S_{\tau} M_* \to M_*(\tau).
\end{equation*}
    
\begin{lemma}\label{lem:overline(s)_tau_M_is_a_section}
Let  $M = (M_*,M^*)$ be a Mackey $\Lambda\calp$-module.
Consider an element $\tau \in \calp$. Let $p_{\tau} \colon M_*(\tau) \to S_{\tau}M_*$
be the projection. Then
\[
  p_{\tau}\circ \overline{s}_{\tau} = \id_{S_{\tau} M_*}.
\]
\end{lemma}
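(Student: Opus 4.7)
The plan is to reduce the claim to the stronger identity
\[
p_\tau \circ s_\tau = p_\tau \colon M_*(\tau) \to S_\tau M_*.
\]
Once this is established, the lemma follows formally: by construction $s_\tau$ vanishes on $L_\tau M_* = \ker p_\tau$, so the factorization $s_\tau = \overline{s}_\tau \circ p_\tau$ gives
\[
p_\tau = p_\tau \circ s_\tau = p_\tau \circ \overline{s}_\tau \circ p_\tau,
\]
and since $p_\tau$ is surjective, we may cancel it on the right to conclude $p_\tau \circ \overline{s}_\tau = \id_{S_\tau M_*}$.

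To prove $p_\tau \circ s_\tau = p_\tau$, I would first verify the factorwise version $p_\tau \circ s^d_\tau = p_\tau$ for each $d \in \{-1, 0, \ldots, \dim(\tau)-1\}$. This is immediate from the defining formula, because
\[
s^d_\tau - \id_{M_*(\tau)} \;=\; -\sum_{\substack{\mu \in \calp,\; \mu < \tau \\ \dim(\mu) = d}} M_*(\mu \le \tau) \circ M^*(\mu \le \tau),
\]
and every summand on the right factors through $M_*(\mu \le \tau)$ with $\mu < \tau$, hence has image inside $L_\tau M_* = \ker p_\tau$.

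The rest is a one-line telescoping. Starting from
\[
p_\tau \circ s_\tau \;=\; p_\tau \circ s^{-1}_\tau \circ s^0_\tau \circ \cdots \circ s^{\dim(\tau)-1}_\tau,
\]
I would apply $p_\tau \circ s^{-1}_\tau = p_\tau$ to strip the outermost factor, then $p_\tau \circ s^0_\tau = p_\tau$, and continue peeling one factor at a time until only $p_\tau$ is left. The degenerate case $\tau = \emptyset$ (for which the composition defining $s_\tau$ is empty, so $s_\tau = \id$ and $L_\emptyset M_* = \{0\}$) is tautological.

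I expect no real obstacle. The technical nilpotency statement $s^d_\tau(L^d_\tau M_*) \subseteq L^{d-1}_\tau M_*$ — which made $\overline{s}_\tau$ well-defined in the first place — was already proved as Lemma~\ref{main_property_of_s_upper_d} using the Mackey double coset relation~\eqref{double_coset_formula}, and we do not need to revisit that identity here; the present lemma only uses that each $s^d_\tau$ is congruent to the identity modulo $L_\tau M_*$, which is visible directly from the defining formula.
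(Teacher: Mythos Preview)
Your proof is correct and follows exactly the paper's approach; the paper's own proof is a single sentence (``Obviously each map $s^d_{\tau}$ satisfies $p_{\tau} \circ s^d_{\tau} = p_{\tau}$''), and you have simply spelled out the telescoping and the passage from $s_\tau$ to $\overline{s}_\tau$ that the paper leaves implicit.
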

\begin{proof} Obviously each map $s^d_{\tau}$ satisfies $p_{\tau} \circ s^d_{\tau} = p_{\tau}$.
\end{proof}

For every element $\tau \in \calp$ the restriction 
\[
	\RES_\tau\colon \Modcat{\Lambda\calp}\to \Modcat{\Lambda}
\]
has a left adjoint $E_\tau$ by~\cite[Lemma~9.31 on page~171]{Lueck(1989)}.
Explicitly, the functor $E_\tau \colon \Modcat{\Lambda} \to \Modcat{\Lambda\calp}$ is given by $E_\tau(N)=N \otimes_\Lambda \Lambda\mor_\calp(\tau, {?})$, where ${?}$ runs through the objects of $\calp$ and $\Lambda\mor_{\calp}(\tau, {?})$ is the free $\Lambda$-module with the set $\mor_{\calp}(\tau, {?})$ as basis.
Equivalently, $N \otimes_{\Lambda} \Lambda\mor_{\calp}(\tau, {?})$ assigns to an object ${?}$ the $\Lambda$-module $N$ if $\tau \le {?}$, and $\{0\}$ otherwise. Functoriality in ${?}$ is given by the identity on $\{0\}$ or $N$, or by the inclusion $\{0\} \to N$.

We get for every element $\tau\in\calp$ and every map of covariant $\Lambda$-modules $u \colon S_\tau M_* \to M_*(\tau)$ a map of covariant $\Lambda\calp$-modules
\[
	\ad(u) \colon E_\tau(S_\tau M_*) \to M_*
\]
by the adjoint of $u$ under the adjunction $(E_\tau, \RES_\tau)$.
For $?\in \calp$ the map $\ad(u)({?})$ 
is given by the composite $S_{\tau} M_* \xrightarrow{u} M_*(\tau) \xrightarrow{M_*(\tau \le {?})} M_*(?)$ if
 $\tau \le {?}$ and by the inclusion $\{0\} \to M_*(?)$ otherwise.

\begin{lemma}\label{lem:widehat(s)_bijective}
	Consider any collection of homomorphisms $\widehat{s}_{\tau} \colon S_{\tau} M_* \to M_*(\tau)$ satisfying $p_{\tau} \circ \widehat{s}_{\tau} = \id_{S_{\tau} M_*}$, where $\tau$ runs through the elements in $\calp$. 
   
   Then the  homomorphism of covariant $\Lambda \calp$-modules
   \[\ad(\widehat{s}) \coloneqq \bigoplus_{\tau \in \calp} \ad(\widehat{s}_{\tau}) \colon
     \bigoplus_{\tau \in \calp} E_{\tau}(S_{\tau}M_*) \to M_*
  \]
  is an isomorphism.
\end{lemma}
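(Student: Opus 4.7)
The plan is to show that for every $\rho \in \calp$ the evaluation
\[
\ad(\widehat{s})(\rho) \colon \bigoplus_{\tau \le \rho} S_\tau M_* \to M_*(\rho), \qquad (x_\tau)_\tau \mapsto \sum_{\tau \le \rho} M_*(\tau \le \rho)\bigl(\widehat{s}_\tau(x_\tau)\bigr),
\]
is bijective; since a morphism of covariant $\Lambda\calp$-modules is an isomorphism if and only if each of its evaluations is, this will prove the lemma. I would proceed by induction on $\dim(\rho)$. The base case $\rho = \emptyset$ is immediate: $L_\emptyset M_* = \{0\}$ forces $\widehat{s}_\emptyset = \id$.

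For surjectivity in the inductive step, every $x \in M_*(\rho)$ splits as $x = \widehat{s}_\rho(p_\rho(x)) + \bigl(x - \widehat{s}_\rho(p_\rho(x))\bigr)$ with the second summand in $\ker p_\rho = L_\rho M_*$. Using the induction hypothesis to write $M_*(\sigma) = \sum_{\mu \le \sigma} M_*(\mu \le \sigma)\bigl(\widehat{s}_\mu(S_\mu M_*)\bigr)$ for each $\sigma < \rho$, one obtains $L_\rho M_* = \sum_{\sigma < \rho} M_*(\sigma \le \rho)(M_*(\sigma)) = \sum_{\mu < \rho} M_*(\mu \le \rho)\bigl(\widehat{s}_\mu(S_\mu M_*)\bigr)$, which lies in the image of $\ad(\widehat{s})(\rho)$.

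The main obstacle is injectivity; the argument I have in mind relies only on the Mackey identity~\eqref{double_coset_formula} and the section property $p_\tau \circ \widehat{s}_\tau = \id$, not on the induction hypothesis. Suppose $\sum_{\tau \le \rho} M_*(\tau \le \rho)\bigl(\widehat{s}_\tau(x_\tau)\bigr) = 0$. Applying $p_\rho$ annihilates every summand with $\tau < \rho$ (each lies in $L_\rho M_*$) and leaves $p_\rho(\widehat{s}_\rho(x_\rho)) = x_\rho$, so $x_\rho = 0$. If some $x_\tau \neq 0$ with $\tau < \rho$, pick $\tau_0 < \rho$ maximal with this property, apply $M^*(\tau_0 \le \rho)$ to the remaining relation, and invoke~\eqref{double_coset_formula} to rewrite it in $M_*(\tau_0)$ as
\[
\sum_{\tau < \rho} M_*(\tau \cap \tau_0 \le \tau_0)\bigl(M^*(\tau \cap \tau_0 \le \tau)(\widehat{s}_\tau(x_\tau))\bigr) = 0.
\]
A case split on $\tau$ then finishes the job: the $\tau = \tau_0$ term equals $\widehat{s}_{\tau_0}(x_{\tau_0})$; for $\tau \supsetneq \tau_0$ maximality forces $x_\tau = 0$; for every other $\tau < \rho$ one has $\tau \cap \tau_0 \subsetneq \tau_0$, so the summand lies in $L_{\tau_0} M_*$. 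Hence $\widehat{s}_{\tau_0}(x_{\tau_0}) \in L_{\tau_0} M_*$, and applying $p_{\tau_0}$ yields $x_{\tau_0} = 0$, the desired contradiction. This maximality-plus-Mackey device is what makes the conclusion valid for \emph{any} choice of sections $\widehat{s}_\tau$.
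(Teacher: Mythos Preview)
Your proof is correct and follows essentially the same route as the paper's: surjectivity by induction on $\dim(\rho)$ using that $L_\rho M_*$ is covered by the images of lower $M_*(\sigma)$, and injectivity via the Mackey identity~\eqref{double_coset_formula} together with the section property. The only cosmetic difference is that the paper selects $\tau_0$ of maximal \emph{dimension} among the $\tau$ with $a_\tau \neq 0$ (so that $\dim(\tau) \le \dim(\tau_0)$ forces $\tau \cap \tau_0 \subsetneq \tau_0$ whenever $\tau \neq \tau_0$), whereas you first dispose of $x_\rho$ and then take $\tau_0$ maximal in the poset order; both choices make the same Mackey argument go through.
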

\begin{proof}
  We start with injectivity. Suppose that $\ad(\widehat{s})$ is not injective.
  Then there exists an element $\sigma \in \calp$ and a non-trivial
  element $a = (a_{\tau})_{\tau \in \calp} \in \bigoplus_{\tau \in \calp} E_{\tau}(S_{\tau}M_*)(\sigma)$
  such that $\ad(\widehat{s})(\sigma)(a) = 0$. Choose $\tau_0 \in \calp$ with $a_{\tau_0} \not= 0$ such that for all $\tau\in \calp$ with $a_{\tau} \not=0$ we have $\dim(\tau) \le \dim(\tau_0)$.
  If $\tau\in \calp$ satisfies $a_{\tau} \not= 0$ and hence
    $E_{\tau}(S_{\tau}M_*)(\sigma) \not= 0$, we use the explicit description of $E_\tau(S_\tau M_*)$ to conclude $\tau \le \sigma$.
  The composite
  \begin{equation}
  \bigoplus_{\tau \in \calp} E_{\tau}(S_{\tau}M_*)(\sigma) \xrightarrow{\ad(\widehat{s})(\sigma)} M_*(\sigma)
  \xrightarrow{M^*(\tau_0 \le \sigma)} M_*(\tau_0) \xrightarrow{p_{\tau_0}} S_{\tau_0} M_*
  \label{lem:widehat(s)_bijective:(1)}
  \end{equation}
  sends $a$ to $0$ since $\ad(\widehat{s})(\sigma)(a) = 0$.

  Consider an element $\tau  \in \calp$ with $a_{\tau} \not= 0$.
  Then $\dim(\tau) \le \dim(\tau_0)$ and hence we get $\tau \cap \tau_0 < \tau_0$ if
  $\tau\not= \tau_0$ and $\tau \cap \tau_0 = \tau_0$ if $\tau = \tau_0$.  The
  composite
  \[
   M_*(\tau) \xrightarrow{M_*(\tau \le \sigma)} M_*(\sigma)
    \xrightarrow{M^*(\tau_0 \le \sigma)} M_*(\tau_0) \xrightarrow{p_{\tau_0}} S_{\tau_0} M_*
  \]
  agrees because of~\eqref{double_coset_formula}  with the composite
  \[
    M_*(\tau) \xrightarrow{M^*(\tau \cap \tau_0\le \tau)} M_*(\tau\cap \tau_0)
    \xrightarrow{M_*(\tau\cap \tau_0 \le \tau_0)} M_*(\tau_0) \xrightarrow{p_{\tau_0}} S_{\tau_0} M_*.
  \]
  Hence it is zero if $\tau \not= \tau_0$ and it is $p_{\tau_0}$ if $\tau = \tau_0$.
  This implies that the restriction of the composite~\eqref{lem:widehat(s)_bijective:(1)}
  to the summand associated to $\tau$ is trivial if $\tau \not= \tau_0$ and is the
  identity under the obvious
  identification $E_{\tau_0}(S_{\tau_0}M_*)(\sigma) = S_{\tau_0} M_*$ if $\tau = \tau_0$.
  We conclude  that the
  composite~\eqref{lem:widehat(s)_bijective:(1)} sends $a$ to $a_{\tau_0}$
  under the obvious identification $E_{\tau_0}(S_{\tau_0}M_*)(\sigma) = S_{\tau_0} M_*$.
  Since this implies $a_{\tau_0} = 0$, we get a contradiction. This finishes the proof that
$\ad(\widehat{s})$ is injective.

Next we show by induction for $d = -1,0,1,2, \ldots$ that $\ad(\widehat{s})(\sigma)$ is
surjective for all $\sigma \in \calp$ with $\dim(\sigma) \le d$. The induction beginning
is obvious since $\emptyset$ is the unique  initial object,  hence
$S_{\emptyset}M_*= M_*(\emptyset)$ holds and therefore $\ad(\widehat{s})_{\emptyset}(\emptyset)$ is  bijective.
The induction step from $(d-1)$ to
$d \ge 0$ is done as follows.  The composite
\[
  E_{\sigma}(S_{\sigma}M_*)(\sigma) \xrightarrow{\ad(\widehat{s})_{\sigma}} M_*(\sigma) \to S_{\sigma}M_*
\]
is surjective. Hence it suffices to show that $L_{\sigma}M_*$ is contained in the
restriction of $\ad(\widehat{s})(\sigma)$ to
$\bigoplus_{\tau \in \calp, \tau \not= \sigma} E_{\tau}(S_{\tau} M_*)(\sigma)$.  It
suffices to show that for every $\sigma'$ with $\sigma' < \sigma$ the image of
$M_*(\sigma') \to M_*(\sigma)$ is contained in the restriction of $\ad(\widehat{s})(\sigma)$
to $\bigoplus_{\tau \in \calp, \tau \not= \sigma} E_{\tau}(S_{\tau} M_*)(\sigma)$.  By
induction hypothesis $\ad(\widehat{s})(\sigma')$ is surjective.  Now
Lemma~\ref{lem:widehat(s)_bijective} follows from naturality of $\ad(\widehat{s})$ and the
fact that $E_{\sigma}(S_{\sigma} M_*)(\sigma')$ vanishes.
\end{proof}

For a covariant $\Lambda\calp$-module $N$, denote by $H_n(\calp;N)$ its homology.  This is
$H_n(P_* \otimes_{\Lambda\calp} N)$ for the $\Lambda$-homology of the $\Lambda$-chain complex
$P_* \otimes_{\Lambda\calp} N$ for any projective $\Lambda\calp$-resolution $P_*$ of the constant
$\Lambda\calp$-module $\underline{\Lambda}$ with value $\Lambda$. In the notation
of~\cite[Chapter~17]{Lueck(1989)} this is
$\operatorname{Tor}_n^{\Lambda\calp}(\underline{\Lambda},N)$.

\begin{theorem}\label{the:homology_of_Mackey_module}
  Let  $M = (M_*,M^*)$ be a  Mackey $\Lambda\calp$-module.
  Then

  \begin{enumerate}

  \item\label{the_homology_of_Mackey_module:higher_homology}
    $H_n(\calp;M_*)$ vanishes for $n \ge 1$;

    \item\label{the:homology_of_Mackey_module:zeroth_homology}
   We obtain an isomorphism
   \[
   \colim_{\calp} M_* \cong H_0(\calp;M_*) \cong \bigoplus_{\sigma \in \calp} S_{\sigma} M_*;
   \]

   \item\label{the:homology_of_Mackey_module:splitting}
     $S_{\sigma} M_*$ is a direct summand in the $\Lambda$-module $M_*(\sigma)$.

   \end{enumerate}
 \end{theorem}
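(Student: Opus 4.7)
The plan is to deduce all three assertions from the direct sum decomposition furnished by Lemma~\ref{lem:widehat(s)_bijective}. Assertion~\ref{the:homology_of_Mackey_module:splitting} is immediate: the map $\overline{s}_{\tau}$ constructed just before the theorem is a $\Lambda$-linear section of $p_{\tau}$ by Lemma~\ref{lem:overline(s)_tau_M_is_a_section}, so $S_{\tau} M_*$ is a direct summand of $M_*(\tau)$. Inserting any such collection of sections into Lemma~\ref{lem:widehat(s)_bijective} produces an isomorphism of covariant $\Lambda\calp$-modules
\[
\bigoplus_{\tau \in \calp} E_{\tau}(S_{\tau} M_*) \xrightarrow{\cong} M_*.
\]
Since $H_n(\calp;-) = \Tor_n^{\Lambda\calp}(\underline{\Lambda},-)$ commutes with arbitrary direct sums, the assertions~\ref{the_homology_of_Mackey_module:higher_homology} and~\ref{the:homology_of_Mackey_module:zeroth_homology} reduce to the claim that, for every $\Lambda$-module $N$ and every $\tau \in \calp$, one has $H_0(\calp; E_{\tau} N) \cong N$ and $H_n(\calp; E_{\tau} N) = 0$ for $n \ge 1$.

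To prove this I would use two properties of the functor $E_{\tau} \colon \Modcat{\Lambda} \to \Modcat{\Lambda\calp}$, both following directly from the explicit formula $E_{\tau}(N) = N \otimes_{\Lambda} \Lambda\mor_{\calp}(\tau,?)$. First, $E_{\tau}$ is exact, because exactness in $\Modcat{\Lambda\calp}$ is tested pointwise and the evaluation of $E_{\tau}$ at any $? \in \calp$ is either the identity or the zero functor on $\Modcat{\Lambda}$, depending on whether $\tau \le ?$. Second, $E_{\tau}$ preserves projectives, since it is left adjoint to the obviously exact restriction functor $\RES_{\tau}$. Consequently, for every projective resolution $Q_* \to N$ over $\Lambda$, the complex $E_{\tau}(Q_*) \to E_{\tau}(N)$ is a projective resolution over $\Lambda\calp$.

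The computation is completed by the identification $\underline{\Lambda} \otimes_{\Lambda\calp} E_{\tau}(L) \cong \colim_{\calp} E_{\tau}(L) \cong L$ for every $\Lambda$-module $L$; this holds because the subposet $\{? \in \calp \mid \tau \le ?\}$ on which $E_{\tau}(L)$ is supported has $\tau$ as an initial object and all structure maps between two such objects are the identity on $L$. Hence
\[
\Tor_n^{\Lambda\calp}(\underline{\Lambda}, E_{\tau} N) \cong H_n\bigl(\underline{\Lambda} \otimes_{\Lambda\calp} E_{\tau}(Q_*)\bigr) \cong H_n(Q_*),
\]
which equals $N$ for $n = 0$ and vanishes for $n \ge 1$. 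Summing over $\tau \in \calp$, together with the standard identification $H_0(\calp; M_*) \cong \colim_{\calp} M_*$, then yields the remaining assertions. The only slightly delicate point is verifying that $E_{\tau}$ is simultaneously exact and projective-preserving, but both properties drop out of the pointwise description of $E_{\tau}$ and the adjunction $(E_{\tau}, \RES_{\tau})$, so no real obstacle arises.
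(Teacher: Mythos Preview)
Your argument is correct and follows essentially the same route as the paper: both proofs deduce~\ref{the:homology_of_Mackey_module:splitting} from Lemma~\ref{lem:overline(s)_tau_M_is_a_section}, invoke Lemma~\ref{lem:widehat(s)_bijective} to reduce~\ref{the_homology_of_Mackey_module:higher_homology} and~\ref{the:homology_of_Mackey_module:zeroth_homology} to the computation of $H_n(\calp;E_\tau N)$, and then use the adjunction $(E_\tau,\RES_\tau)$ to carry out that computation. The only difference is which variable of $\Tor^{\Lambda\calp}$ is resolved: the paper resolves $\underline{\Lambda}$ by a projective $\Lambda\calp$-resolution $P_*$ and identifies $P_*\otimes_{\Lambda\calp} E_\tau(N)\cong \RES_\tau(P_*)\otimes_\Lambda N$ with $\RES_\tau(P_*)$ a projective $\Lambda$-resolution of $\Lambda$, whereas you resolve $N$ and push the resolution through $E_\tau$ using its exactness and projective-preservation; both are standard and equivalent.
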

 \begin{proof}~\ref{the_homology_of_Mackey_module:higher_homology}
   From Lemma~\ref{lem:widehat(s)_bijective}, we obtain an isomorphism
   \[
    \bigoplus_{\sigma \in \calp}  H_n(\calp;E_{\sigma}(S_{\sigma} M_*)) \cong 
   H_n(\calp;\bigoplus_{\sigma \in \calp} E_{\sigma}(S_{\sigma} M_*)) \cong H_n(\calp;M_*).
 \]
 Since the automorphism group of the object $\sigma$ in $\calp$ is the trivial group
 $\{1\}$, we get for any $\Lambda$-module $N$ an isomorphism
 \[
 H_n(\calp;E_{\sigma}(N)) \cong H_n(\{1\};N) \cong
   \begin{cases} \{0\} & \text{if}\; n \ge 1; \\ N & \text{if}\; n = 0. 
   \end{cases}
 \]
 This follows from the adjunction $(E_{\sigma},\RES_{\sigma})$ of~\cite[Lemma~9.31 on page~171]{Lueck(1989)} and the fact that $\RES_{\sigma}(P_*)$ is a projective
 $\Lambda$-resolution of $\Lambda$.
 \\[1mm]\ref{the:homology_of_Mackey_module:zeroth_homology} For every covariant
 $\Lambda\calp$-module $N$, there are canonical $\Lambda$-isomorphisms
 \[
 \colim_{\calp} N \cong \underline{\Lambda} \otimes_{\Lambda\calp} N \cong H_0(\calp;N),
 \]
 where $\underline{\Lambda}$ is the constant $\Lambda\calp$-module with value $\Lambda$.
 This follows from the adjunction between tensor product and the hom-functor
 and the fact that $-\otimes_{\Lambda \calp} N$ is right-exact,
 see~\cite[9.21 and 9.23 on page~169]{Lueck(1989)}.
 \\[1mm]\ref{the:homology_of_Mackey_module:splitting} This follows from
 Lemma~\ref{lem:overline(s)_tau_M_is_a_section}.
\end{proof}

Let $\ch_n(\Sigma,\sigma)$ be the set of $n$-chains $\sigma_0 < \sigma_1 < \cdots < \sigma_n$ in $\calp$ with
$\sigma_0 = \sigma$. Define the integer
\begin{equation*}
n_{\sigma} \coloneqq  \sum_{n \ge 0} (-1)^n \cdot |\ch_n(\Sigma,\sigma)|.
%\label{n_sigma}
\end{equation*}
  
Fix a class of $\Lambda$-modules $\cale$ with the property that for an exact
sequence $0 \to V_0 \to V_1 \to V_2 \to 0$ the $\Lambda$-module $V_1$ belongs to $\cale$
if and only if both $V_0$ and $V_2$ belong to $\cale$. An example is the class $\cale$ of
$\Lambda$-modules whose underlying set is finite, and for a Noetherian ring $\Lambda$ the
class $\cale$ of finitely generated $\Lambda$-modules. Denote by $G_0(\cale)$ the
Grothendieck group of elements in $\cale$, i.e., the abelian group with the
isomorphism classes of elements in $\cale$ as generators and relations
$[V_1] = [V_0]+ [V_2]$ for every short exact sequence $0 \to V_0 \to V_1 \to V_2 \to 0$ of
$\Lambda$-modules belonging to $\cale$.

\begin{theorem}\label{the:computation_in_G_0(Lambda)}
  Let  $M = (M_*,M^*)$ be a Mackey $\Lambda\calp$-module.
  Suppose that $M_*(\sigma)$ lies in $\cale$ for all $\sigma \in \calp$.

 Then we get in $G_0(\cale)$
 \[
 [\colim_{\calp} M_*] = \sum_{\sigma \in \calp} n_{\sigma} \cdot [M_*(\sigma)].
 \]
\end{theorem}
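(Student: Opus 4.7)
The plan is to assemble the result from Theorem~\ref{the:homology_of_Mackey_module} and Lemma~\ref{lem:widehat(s)_bijective}, thereby reducing the statement to a purely combinatorial identity for the numbers $n_\sigma$. Theorem~\ref{the:homology_of_Mackey_module}~\ref{the:homology_of_Mackey_module:zeroth_homology} provides an isomorphism $\colim_\calp M_* \cong \bigoplus_{\sigma \in \calp} S_\sigma M_*$, and Lemma~\ref{lem:widehat(s)_bijective} evaluated at $\sigma \in \calp$ yields $M_*(\sigma) \cong \bigoplus_{\tau \le \sigma} S_\tau M_*$, since $E_\tau(S_\tau M_*)(\sigma) = S_\tau M_*$ when $\tau \le \sigma$ and is $\{0\}$ otherwise. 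Before descending to $G_0(\cale)$ I would verify that each $S_\sigma M_*$ lies in $\cale$: by Theorem~\ref{the:homology_of_Mackey_module}~\ref{the:homology_of_Mackey_module:splitting} it is a direct summand of $M_*(\sigma) \in \cale$, and the if-and-only-if closure of $\cale$ applied to the resulting split short exact sequence forces $S_\sigma M_* \in \cale$.

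Translating the two isomorphisms into $G_0(\cale)$ gives the identities $[\colim_\calp M_*] = \sum_{\sigma} [S_\sigma M_*]$ and $[M_*(\sigma)] = \sum_{\tau \le \sigma} [S_\tau M_*]$. Substituting the second into the target expression and swapping the order of summation produces
\[
\sum_{\sigma \in \calp} n_\sigma \,[M_*(\sigma)] = \sum_{\tau \in \calp} \biggl( \sum_{\sigma \ge \tau} n_\sigma \biggr) [S_\tau M_*].
\]
Comparison with $[\colim_\calp M_*] = \sum_\tau [S_\tau M_*]$ then reduces the theorem to the combinatorial identity $\sum_{\sigma \ge \tau} n_\sigma = 1$ for every fixed $\tau \in \calp$.

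To prove this identity, I unpack the definition of $n_\sigma$ and interchange summations: the inner expression $\sum_{\sigma \ge \tau} |\ch_n(\Sigma,\sigma)|$ counts all $n$-chains $\sigma_0 < \cdots < \sigma_n$ with $\sigma_0 \ge \tau$, and strict monotonicity forces every $\sigma_i$ to lie in the subposet $\calp_{\ge \tau}$. Hence $\sum_{\sigma \ge \tau} n_\sigma$ equals the Euler characteristic of the order complex of $\calp_{\ge \tau}$. Since $\tau$ is the minimum element of $\calp_{\ge \tau}$, that order complex is a cone with apex $\tau$, hence contractible with Euler characteristic $1$. The main potential obstacle here is not a single difficult step but the bookkeeping: verifying membership in $\cale$ for each $S_\sigma M_*$ and correctly inverting the order of summation on the poset. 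Once these are handled, the proof is a clean assembly of Theorem~\ref{the:homology_of_Mackey_module}, Lemma~\ref{lem:widehat(s)_bijective}, and the cone-point observation.
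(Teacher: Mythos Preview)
Your proof is correct, but it takes a genuinely different route from the paper's. The paper proceeds homologically: it invokes the bar resolution $C_*$ of the constant $\Lambda\calp$-module $\underline{\Lambda}$, in which $C_n = \bigoplus_{\sigma}\bigoplus_{\ch_n(\Sigma,\sigma)}\Lambda\mor_{\calp}({?},\sigma)$, tensors with $M_*$, and equates the alternating sum of the chain modules with the alternating sum of the homology modules in $G_0(\cale)$; the numbers $n_\sigma$ appear automatically as the coefficients in $C_* \otimes_{\Lambda\calp} M_*$, and the homology side collapses to $[\colim_\calp M_*]$ by Theorem~\ref{the:homology_of_Mackey_module}. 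By contrast, you bypass any further homological algebra and work entirely with the direct sum decompositions already established in Lemma~\ref{lem:widehat(s)_bijective} and Theorem~\ref{the:homology_of_Mackey_module}, reducing the statement to the M\"obius-type identity $\sum_{\sigma \ge \tau} n_\sigma = 1$, which you then recognise as the Euler characteristic of the order complex of $\calp_{\ge \tau}$, a cone on $\tau$. The paper's argument explains conceptually where the $n_\sigma$ come from (they are hard-wired into the bar resolution) and handles the combinatorics implicitly; your argument is more elementary, using only the splittings already proved, at the cost of isolating and proving the combinatorial identity by hand.
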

\begin{proof}
  The bar-resolution yields a finite free $\Lambda\calp$-resolution $C_*$ of the constant
  $\Lambda \calp$-module $\underline{\Lambda}$ with value $\Lambda$ such that 
  \[
    C_n = \bigoplus_{\sigma \in \calp} \bigoplus_{\ch_n(\Sigma,\sigma)} \Lambda\mor_{\calp}({?},\sigma),
   \]
   see~\cite[Section~3]{Davis-Lueck(1998)}. Since $C_*$ is a finite free $\Lambda\calp$-chain complex, the
   $\Lambda$-chain complex $C_* \otimes_{\Lambda\calp} M_*$ is a finite-dimensional
   $\Lambda$-complex whose $\Lambda$-chain modules belong to $\cale$, and we get in $G_0(\cale)$
 \begin{eqnarray*}
   \sum_{n \ge 0} (-1)^n \cdot [H_n(C_* \otimes_{\Lambda \calp} M_*)]
   & = &
   \sum_{n \ge 0} (-1)^n \cdot [C_n \otimes_{\Lambda \calp} M_*]
   \\
   & = &
   \sum_{n \ge 0} (-1)^n \cdot \bigl(\sum_{\sigma \in \calp} \sum_{\ch_n(\Sigma,\sigma)} [M_*(\sigma)]\bigr)
   \\
   & = &
   \sum_{\sigma \in \calp} \bigl(\sum_{n \ge 0} (-1)^n \cdot |\ch_n(\Sigma,\sigma)|\bigr)  \cdot [M_*(\sigma)]
   \\
   & = &
   \sum_{\sigma \in \calp} n_{\sigma} \cdot [M_*(\sigma)].
 \end{eqnarray*}
 Since $H_n(C_* \otimes_{\Lambda \calp} M_*)$ agrees with $H_n(\calp;M_*)$, the claim follows
 from Theorem~\ref{the:homology_of_Mackey_module}.
\end{proof}

%%%%%%%%%%%%%%%%%%%%%%%%%%%%%%%%%%%%%%%%%%%%%%%%%%%%%%%%%%%%%%%%%%%%%
%%%%%%%%%%%%%%%%%%%%%%%%%%%%%%%%%%%%%%%%%%%%%%%%%%%%%%%%%%%%%%%%%%%%%
%%%%%%%%%%%%%%%%%%%%%%%%%%%%%%%%%%%%%%%%%%%%%%%%%%%%%%%%%%%%%%%%%%%%%

\subsection{Proof of Theorem~\ref{the:main_theorem}}
\label{subsec:Proof_of_the_main_Theorem}

We defined the functor $\calh^{\calc}_n \colon \Groups \to \Modcat{\Lambda}$
in~\eqref{calh_upper_calc_n}.   We conclude from
Example~\ref{exa:Mackey_modules_coming_from_graph_products} that
$(\calh^{\calc}_n \circ W_* \circ I,\calh^{\calc}_n \circ W^* \circ I)$ is a
Mackey $\Lambda\calp$-module. Now Theorem~\ref{the:main_theorem} follows from
Theorem~\ref{Computing_calh_upper_W(Sigma)(EGF(W(Sigma))(calc)},
Theorem~\ref{the:homology_of_Mackey_module}, and
Theorem~\ref{the:computation_in_G_0(Lambda)}.

%%%%%%%%%%%%%%%%%%%%%%%%%%%%%%%%%%%%%%%%%%%%%%%%%%%%%%%%%%%%%%%%%%%%%
%%%%%%%%%%%%%%%%%%%%%%%%%%%%%%%%%%%%%%%%%%%%%%%%%%%%%%%%%%%%%%%%%%%%%
%%%%%%%%%%%%%%%%%%%%%%%%%%%%%%%%%%%%%%%%%%%%%%%%%%%%%%%%%%%%%%%%%%%%%

\typeout{---  Brief review of the  Baum--Connes Conjecture  and the Farrell--Jones Conjecture ----}

\section{Isomorphism Conjectures in $K$- and $L$-theory}%
\label{sec:Brief_review_of_the_Baum-Connes_Conjecture_and_the_Farrell-Jones_Conjecture}

In this section we review the Isomorphism Conjectures of Baum--Cones and Farrell--Jones and recollect the most important results on the passage from $\calfin$ to $\calvcyc$ in the Farrell--Jones setting.

Let $\calc$ be a non-empty class of groups which is closed under isomorphisms, passage to
subgroups and passage to quotient groups. Recall that given a group $G$, we denote by $\EGF{G}{\calc}$ the classifying space of $G$ with respect to the family of subgroups
$\calf_{\calc}(G) = \{ H \subseteq G \mid H \in \calc\}$.
Consider a covariant $\Groupoids$-spectrum
\[
\bfE\colon \Groupoids \to \Spectra
\]
which respects equivalences. We obtain an equivariant homology theory $H^?_*(-;\bfE)$ associated to $\bfE$ from Theorem~\ref{the:GROUPOID-spectra_and_equivariant_homology_theories}. 

Then the \emph{Meta-Isomorphism Conjecture for $\bfE$ and
  the class $\calc$} predicts that the projection $\pr \colon \EGF{G}{\calc} \to G/G$
  induces for all $n \in \IZ$ an isomorphism
  \[
    H^G_n(\pr;\bfE) \colon H_n^G(\EGF{G}{\calc};\bfE)
    \xrightarrow{\cong} H_n^G(G/G;\bfE) = \pi_n(\bfE(G)).
  \]

If we make the appropriate choices for $\bfE$ and $\calc$, this
  becomes the Baum--Connes Conjecture or it becomes the Farrell--Jones Conjecture for algebraic $K$-theory, for algebraic $L$-theory, for Waldhausen's $A$-theory, or for topological Hochschild homology.

%%%%%%%%%%%%%%%%%%%%%%%%%%%%%%%%%%%%%%%%%%%%%%%%%%%%%%%%%%%%%%%%%%%%%

\subsection{The Baum--Connes Conjecture}\label{subsec:The_Baum_Connes_Conjecture}

Given a discrete group $G$, denote by $C^*_r(G)$ and $C^*_r(G;\IR)$ its \emph{reduced
  complex and reduced real group $C^*$-algebra} and by $C^*_m(G)$ and $C^*_m(G;\IR)$ 
its \emph{maximal complex and maximal real group $C^*$-algebra}. There are covariant
functors
\begin{eqnarray}
  \bfK_{C^*_m} \colon \Groupoids &\to \Spectra;
\label{bfK_(C_upper_ast_m)}
\\                                 
  \bfK_{C^*_m;\IR} \colon \Groupoids &\to \Spectra,
\label{bfK_(C_upper_ast_m_R)}
\end{eqnarray}
which send equivalences of groupoids to weak equivalences of spectra and satisfy
$\pi_n(\bfK_{C^*_m}(G)) = K_n(C^*_m(G))$ and
$\pi_n(\bfK_{C^*_m;\IR}(G)) = KO_n(C^*_m(G;\IR))$ for $n \in \IZ$. Here $K_*$ and $KO_*$ denote
topological $K$-theory. If we consider the class
of finite groups, the Meta-Isomorphism Conjecture
reduces to the \emph{Baum--Connes Conjecture for the maximal group $C^*$-algebra}. It predicts the
bijectivity of the assembly maps for $n \in \IZ$
\begin{eqnarray*}
  K_n^G(\eub{G}) &\to & K_n(C^*_m(G));
  \\
  KO^G_n(\eub{G}) &\to  & KO_n(C^*_m(G;\IR)),
\end{eqnarray*}
where the source is given by equivariant $K$-homology for which we have
the identifications $K_n^G(\eub{G}) = H_n^G(\eub{G};\bfK_{C^*_m})$ and
$KO_n^G(\eub{G}) = H_n^G(\eub{G};\bfK_{C^*_m;\IR})$.

We can apply $K$-theory to the natural maps of $C^*$-algebras $C^*_m(G) \to C^*_r(G)$ and $C^*_m(G;\IR) \to C^*_r(G;\IR)$
to obtain maps $f_n \colon  K_n(C^*_m(G)) \to K_n(C^*_r(G))$ and
$f_n^{\IR} \colon  KO_n(C^*_m(G;\IR)) \to KO_n(C^*_r(G;\IR))$.
The \emph{Baum--Connes Conjecture} predicts that the composites
\begin{eqnarray*}
& K_n^G(\eub{G}) \to  K_n(C^*_m(G)) \xrightarrow{f_n} K_n(C^*_r(G));&
  \\
&  KO^G_n(\eub{G})  \to KO_n(C^*_m(G;\IR)) \xrightarrow{f_n^{\IR}} KO_n(C^*_r(G;\IR)) &
\end{eqnarray*}
are bijective for all $n \in \IZ$.

There are counterexamples to the Baum--Connes Conjecture for the maximal group
$C^*$-algebra, but no counterexamples to the Baum--Connes Conjecture are known. We want to
consider the Baum--Connes Conjecture for the maximal group $C^*$-algebra since
$K_n(C^*_m(G))$ and $KO_n(C^*_m(G;\IR))$ are functorial in $G$ for all group homomorphisms,
whereas $K_n(C^*_r(G))$ and $KO_n(C^*_r(G;\IR))$ are functorial for injective group
homomorphisms, but not in general for any group homomorphism. Moreover the covariant
functors~\eqref {bfK_(C_upper_ast_m)} and~\eqref{bfK_(C_upper_ast_m_R)} are defined on
$\Groupoids$. This ensures that the induction
structure is available for all group homomorphisms and not only for injective group homomorphisms as it is the case if we replace~\eqref {bfK_(C_upper_ast_m)}
and~\eqref{bfK_(C_upper_ast_m_R)} by their versions for the reduced $C^*$-algebras. We
later want to apply the induction structure also to certain split surjective group homomorphisms, 
see Remark~\ref{rem:retratcions}.

There is a more general \emph{Baum--Connes Conjecture with coefficients}, which is known to be true for a large class of groups and which has good inheritance properties. In particular, the class of groups satisfying the Baum--Connes Conjecture with coefficients is closed under taking graph products, since it is stable under finite direct products and amalgamated products, see~\cite{Oyono-Oyono(2001)} and~\cite{Oyono-Oyono(2001b)}.

%%%%%%%%%%%%%%%%%%%%%%%%%%%%%%%%%%%%%%%%%%%%%%%%%%%%%%%%%%%%%%%%%%%%%

\subsection{The Farrell--Jones Conjecture}\label{subsec:The_Farrell-Jones_Conjecture}

Given a ring $R$ (with involution), there are covariant functors
\begin{eqnarray*}
\bfK_R \colon \Groupoids &\to & \Spectra;
%\label{bfK_R}
\\
 \bfL_R^{\langle -\infty \rangle} \colon \Groupoids &\to & \Spectra,
%\label{bfL_R}
\end{eqnarray*}
which send equivalences of groupoids to weak equivalences of spectra and satisfy
$\pi_n(\bfK_R(G)) = K_n(RG)$ and $\pi_n(\bfL_R^{\langle -\infty \rangle}(G)) = L_n^{\langle -\infty \rangle}(RG)$.
Here $K_*$ denotes non-connective algebraic $K$-theory and $L_*^{\langle -\infty \rangle}$ denotes algebraic $L$-theory
with decoration $\langle -\infty \rangle$.
If we consider the class of virtually cyclic groups,
the Meta-Isomorphism Conjecture
reduces to the \emph{$K$-theoretic or the $L$-theoretic Farrell--Jones Conjecture} which
predicts that for all $n \in \IZ$ the corresponding map
\begin{eqnarray*}
  H^G_n(\edub{G};\bfK_R)
  &\to &
   H^G_n(G/G;\bfK_R)  = K_n(RG);
  \\
  H^G_n(\edub{G};\bfL_R^{\langle -\infty \rangle})
  &\to &
  H^G_n(G/G;\bfL_R^{\langle -\infty \rangle}) = L_n^{\langle -\infty \rangle}(RG)
\end{eqnarray*}
is bijective.

There is a more general \emph{Full Farrell--Jones Conjecture} which allows additive $G$-categories as coefficients. It is known to be true for a large class of groups and has good inheritance properties. In particular, the class of groups satisfying the Full Farrell--Jones Conjecture is closed under taking graph products, which is a result of Gandini--R\"uping~\cite{Gandini-Rueping(2013)}. There also is a version of the Farrell--Jones Conjecture for Waldhausen's $A$-theory which we will not discuss here. It satisfies similar inheritance properties as the Full Farrell--Jones Conjecture, see~\cite{Enkelmann-Lueck-Malte-Ullmann-Winges(2018)} and~\cite{Ullmann-Winges(2019)}. Also the following Theorem~\ref{the:passage_from_calfin_to_calvycy} \ref{the:passage_from_calfin_to_calvycy:split_injective} holds in this setting, see~\cite{Bunke-Kasprowski-Winges(2018)}.

%%%%%%%%%%%%%%%%%%%%%%%%%%%%%%%%%%%%%%%%%%%%%%%%%%%%%%%%%%%%%%%%%%%%%

\subsection{The passage from $\calfin$ to $\calvcyc$}%
\label{subsec:The_passage_from_calfin_to_calvcyc}

The Farrell--Jones Conjecture is more complicated than  the Baum--Connes Conjecture
since for the Farrell--Jones Conjecture the class of virtually cyclic groups has to be considered,
whereas for the Baum--Connes Conjecture the class of finite groups suffices.
Hence one has to understand the passage from $\eub{G}$ to $\edub{G}$ in the Farrell--Jones setting.

\begin{theorem}\label{the:passage_from_calfin_to_calvycy}\

  \begin{enumerate}
  \item\label{the:passage_from_calfin_to_calvycy:split_injective}
    Let $G$ be any group and $R$ be a ring. Then the relative assembly maps
    \begin{eqnarray*}
  H^G_n(\eub{G};\bfK_R)
  &\to &
   H^G_n(\edub{G};\bfK_R);
  \\
  H^G_n(\eub{G};\bfL_R^{\langle -\infty \rangle})
  &\to &
  H^G_n(\edub{G};\bfL_R^{\langle -\infty \rangle})
\end{eqnarray*}
   are split injective for all $n \in \IZ$; 
    
  \item\label{the:passage_from_calfin_to_calvycy:general_K-theory_R_reg}
  Let $G$ be any group and $R$ be a regular ring. Then the  relative assembly map
  \[
  H_n^G(\eub{G};\bfK_R) \to H_n^G(\edub{G};\bfK_R)
  \]
  is rationally bijective  for all $n \in \IZ$; 

  \item\label{the:passage_from_calfin_to_calvycy:general_K-theory_R_reg_Q_subseteq_R}
    Let $G$ be any group and $R$ be a regular ring. Suppose that for any finite subgroup $H \subseteq G$
    its order $|H|$ is invertible in $R$. Then the  relative assembly map
  \[
  H_n^G(\eub{G};\bfK_R) \to H_n^G(\edub{G};\bfK_R)
  \]
  is bijective for all $n \in \IZ$; 

  \item\label{the:passage_from_calfin_to_calvycy:special_K-theory}
    Let $R$ be a regular ring. Let $W = W(X,\calw)$ be a graph product and $d$ be a natural number. Suppose that for any vertex $v \in V$ the group $W_v$ is either torsionfree or a finite group whose order divides $d$.
Then the relative assembly map
  \[
  H_n^W(\eub{W};\bfK_R) \to H_n^W(\edub{W};\bfK_R)
  \]
  is bijective after inverting $d$  for all $n \in \IZ$; 

  \item\label{the:passage_from_calfin_to_calvycy:general_L-theory}
  Let $G$ be any group and $R$ be a ring with involution. Then the  relative assembly map
  \[
  H_n^G(\eub{G};\bfL^{\langle -\infty \rangle}_R) \to H_n^G(\edub{G};\bfL^{\langle -\infty \rangle}_R)
  \]
  is bijective after inverting $2$  for all $n \in \IZ$; 

\item\label{the:passage_from_calfin_to_calvycy:general_L-theory_1/2_in_R}
  Let $G$ be any group and $R$ be a ring with involution such that $2$ is invertible in $R$.
  Then the  relative assembly map
  \[
  H_n^G(\eub{G};\bfL^{\langle -\infty \rangle}_R) \to H_n^G(\edub{G};\bfL^{\langle -\infty \rangle}_R)
  \]
  is bijective for all $n \in \IZ$.

  \item\label{the:passage_from_calfin_to_calvycy:special_L-theory}
  Let $R$ be a ring with involution.
  Let $W = W(X,\calw)$ be a graph product. Suppose that for any vertex $v \in V$
  the group $W_v$ is either torsionfree  or a finite  group of odd order.
  Then the  relative assembly map
  \[
  H_n^W(\eub{W};\bfL^{\langle -\infty \rangle}_R) \to H_n^W(\edub{W};\bfL^{\langle -\infty \rangle}_R)
  \]
  is bijective for all $n \in \IZ$.
\end{enumerate}
\end{theorem}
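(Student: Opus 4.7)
The plan is to split the eight parts into two groups: (i), (ii), (iii), (v), (vi), (vii) are standard statements on the passage from $\calfin$ to $\calvcyc$ for arbitrary groups, which we deduce by invoking the existing literature; (iv) and (viii) are the genuinely graph-product-specific statements, which we handle via the transitivity principle combined with Green's structure theorem for finite subgroups of graph products.

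For the first group the strategy is uniform: the cofiber of the relative assembly map $H_n^G(\eub{G};\bfE)\to H_n^G(\edub{G};\bfE)$ is identified with a direct sum of Nil-type contributions indexed by conjugacy classes of maximal infinite virtually cyclic subgroups of $G$. For $\bfE=\bfK_R$ these are Bass-- and Farrell--Nil groups, which make the map split injective (giving (i), due to Bartels), vanish rationally when $R$ is regular (giving (ii)), and are $|H|$-torsion for each finite subgroup $H\subseteq G$ (giving (iii)). For $\bfE=\bfL_R^{\langle-\infty\rangle}$ the analogous UNil obstructions (Cappell--Ranicki, Banagl--Ranicki, Connolly--Davis) are $2$-primary torsion, which yields (v), (vi), and (vii) in the form stated.

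For (iv) and (viii) the plan is to invoke the transitivity principle of Bartels--Farrell--Lück--Reich: the relative assembly map for $W$ is an isomorphism after the indicated localization provided that for every virtually cyclic subgroup $C\subseteq W$ the corresponding relative assembly map for $C$ is an isomorphism after the same localization. For finite $C$ this is trivial. For $C$ infinite virtually cyclic, $C$ is finite-by-$\IZ$ or finite-by-$D_\infty$ with some finite kernel $F$, and the obstruction is a Nil (respectively UNil) group built from $RF$-modules. Green's structure theorem says that every finite subgroup of $W=W(X,\calw)$ is conjugate into some $W(\sigma)$ and hence embeds into a product of those vertex groups $W_v$ that are finite. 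Under the hypothesis of (iv) this forces $|F|$ to divide a power of $d$, so inverting $d$ inverts $|F|$ and kills the $K$-theoretic Nil-terms; under the hypothesis of (viii) one obtains $|F|$ odd, which kills the $2$-primary UNil contributions with no further localization needed.

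The main obstacle is matching the localization requested in the statement with the torsion order of the Nil/UNil obstructions coming from every infinite virtually cyclic subgroup of $W$. The geometric half of this matching is Green's embedding of finite subgroups of $W$ into some $W(\sigma)$; the algebraic half is the classical fact that the relevant Nil (respectively UNil) groups of $RC$ are torsion of order bounded by $|F|$ (respectively by a power of $2$). Once both inputs are in place, the transitivity principle assembles the local statements on virtually cyclic subgroups into the global statement for $W$.
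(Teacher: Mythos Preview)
Your plan for parts (i)--(iii), (v), (vi) is essentially what the paper does: cite the relevant splitting/vanishing results in the literature (Bartels, L\"uck--Steimle, L\"uck--Reich, L\"uck, Cappell). Fine. Note in passing that the theorem has seven parts, not eight; your ``(viii)'' should be (vii).

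For part (iv), however, your proposed mechanism is not the paper's, and as stated it has a gap. You want to kill the twisted Nil-terms $N\!K_n(RF;\phi)$ by inverting $d$, arguing that $|F|$ divides a power of $d$ and invoking a ``classical fact'' that such Nil-groups are $|F|$-torsion. But there is no such classical fact for \emph{twisted} $N\!K$ of $RF$ with $R$ merely regular; what is available (L\"uck--Steimle, Theorem~9.4) is that $N\!K_n(RF;\phi)[1/m]=0$ when the \emph{automorphism} $\phi$ has order dividing $m$. A priori $\phi$ is conjugation by some $h\in W$ and its order as an automorphism of $F$ is not controlled by $|F|$ at all (e.g.\ $\operatorname{Aut}((\IZ/2)^3)$ has order $168$, with $7$-torsion). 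The paper closes exactly this gap with a graph-product-specific trick you omit: the retraction $r\colon W\to W(\sigma)$ from Remark~\ref{rem:retratcions}. Since $K_H\subseteq W(\sigma)$ and $r$ restricts to the identity on $W(\sigma)$, one computes $\phi(w)=hwh^{-1}=r(hwh^{-1})=r(h)\,w\,r(h)^{-1}$, so $\phi$ is conjugation by $r(h)\in W(\sigma)=\prod_v W_v$. Each factor $W_v$ has order dividing $d$, hence $r(h)^d=1$ and $\phi$ has order dividing $d$. Now L\"uck--Steimle applies. Your Green-type input (finite subgroups conjugate into some $W(\sigma)$) is correct and is also used in the paper, but it is the retraction step that makes (iv) go through.

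For part (vii) your argument (UNil is $2$-primary, $|F|$ odd) is not wrong, but the paper's route is shorter: if every finite subgroup of $W$ has odd order then $W$ contains no infinite virtually cyclic subgroup of type~II at all (any such would have an element of even order), so one only needs that $\EGF{W}{\calfin}\to\EGF{W}{\calvcyc_I}$ induces an isomorphism in $L^{\langle-\infty\rangle}$-homology, which holds for all groups by~\cite[Lemma~4.2]{Lueck(2005heis)}. No UNil analysis is required.
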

\begin{proof}~\ref{the:passage_from_calfin_to_calvycy:split_injective}
  See~\cite{Bartels(2003b)} and~\cite{Lueck-Steimle(2016splitasmb)}.
  \\[1mm]~\ref{the:passage_from_calfin_to_calvycy:general_K-theory_R_reg}
  This is proved in~\cite[Theorem~0.3]{Lueck-Steimle(2016splitasmb)}.
  \\[1mm]~\ref{the:passage_from_calfin_to_calvycy:general_K-theory_R_reg_Q_subseteq_R}
  See~\cite[Proposition~70 on page~744]{Lueck-Reich(2005)}.
  \\[1mm]~\ref{the:passage_from_calfin_to_calvycy:special_K-theory}
  Let $\calvcyc_I$ be the class of virtually cyclic groups of type I, i.e., groups which admit a homomorphism to $\IZ$ with finite kernel.
  Then the
  relative assembly map
  \[
  H_n^W(\EGF{W}{\calvcyc_I};\bfK_R) \to H_n^W(\EGF{W}{\calvcyc};\bfK_R)
\]
is bijective for all $n \in \IZ$ by~\cite[Theorem~1.1]{Davis-Quinn-Reich(2011)}.
Hence it suffices to show that
\[
H_n^W(\EGF{W}{\calfin};\bfK_R) \to H_n^W(\EGF{W}{\calvcyc_I};\bfK_R)
\]
is bijective for all $n \in \IZ$. By the same argument as it appears in~\cite[Proof of
Theorem~0.3 on page~370]{Lueck-Steimle(2016splitasmb)} the claim is reduced to showing that
for any infinite virtually cyclic subgroup $H \subseteq W$ of type $I$ we get
\[
N\!K_n(RK_H;\phi)[1/d] = 0,
\]
where $K_H \subseteq H$ is a finite subgroup such that $H/K_H$ is infinite cyclic and
$\phi \colon K_H \to K_H$ is given by conjugation with an element $h \in H$ which is sent under the projection $H \to H/K_H$ to a generator.
Any finite subgroup of $W$ is conjugated into a group $W(\sigma)$ for some simplex
 $\sigma$ of $\Sigma$ such that $W_v$ is finite for every $v \in V \cap \sigma$. 
(Indeed, this is obvious if $\Sigma$ is a simplex. Otherwise, we can express $W$ as an amalgamated product and use the fact that a finite subgroup of an amalgamated product is conjugated into one of the factors, see~\cite[Theorem~8 in~4.3 on page~36]{Serre(1980)}. Then conclude via induction on the number of vertices.)
Hence we can assume that there exists a
simplex $\sigma$ of $\Sigma$ such that $K_H \subseteq W(\sigma)$ and $W(\sigma)$ is finite.
There is a group homomorphism $r \colon W \to W(\sigma)$ whose restriction to $W(\sigma)$ is the identity,
see Remark~\ref{rem:retratcions}. Consider
$w \in K_H$. Then $hwh^{-1}$ belongs to $K_H$ again. 
We compute
	\[\phi(w) = hwh^{-1} = r(hwh^{-1}) = r(h)r(w)r(h)^{-1}=r(h)wr(h)^{-1}.
	\]
	Hence $\phi$ is given by conjugation with $r(h)\in W(\sigma)$. The order of $r(h)$ and thus also of $\phi$ divides $d$. We conclude
	from~\cite[Theorem~9.4]{Lueck-Steimle(2016splitasmb)} that $N\!K_n(RK_H;\phi)[1/d] = 0$.
 \\[1mm]~\ref{the:passage_from_calfin_to_calvycy:general_L-theory} This is proved
 in~\cite[Lemma~4.2]{Lueck(2005heis)}.
 \\[1mm]~\ref{the:passage_from_calfin_to_calvycy:general_L-theory_1/2_in_R}
 The proof is analogous to the first part of the proof of~\ref{the:passage_from_calfin_to_calvycy:special_K-theory}
 using the fact that UNil-groups vanish, if $1/2$ is contained in $R$, see~\cite[Corollary~3]{Cappell(1974c)}
 and that the map
 \[H_n^G(\EGF{G}{\calfin};\bfL_R^{\langle -\infty \rangle}) \to H_n^G(\EGF{G}{\calvcyc_I};\bfL_R^{\langle -\infty \rangle})
 \]
 is bijective for all $n \in \IZ$, see~\cite[Lemma~4.2]{Lueck(2005heis)}.
 \\[1mm]~\ref{the:passage_from_calfin_to_calvycy:special_L-theory} Any finite
 subgroup of $W$ is conjugated into a group $W(\sigma)$ for some simplex
 $\sigma$ of $\Sigma$ such that $W_v$ is finite for every $v \in V \cap \sigma$. 
 Hence every finite subgroup has odd order and thus every infinite
 virtually cyclic subgroup of $W$ is of type I.  Now the claim follows
 from~\cite[Lemma~4.2]{Lueck(2005heis)}.  This finishes the proof of
 Theorem~\ref{the:passage_from_calfin_to_calvycy}.
\end{proof}

%%%%%%%%%%%%%%%%%%%%%%%%%%%%%%%%%%%%%%%%%%%%%%%%%%%%%%%%%%%%%%%%%%%%%
%%%%%%%%%%%%%%%%%%%%%%%%%%%%%%%%%%%%%%%%%%%%%%%%%%%%%%%%%%%%%%%%%%%%%
%%%%%%%%%%%%%%%%%%%%%%%%%%%%%%%%%%%%%%%%%%%%%%%%%%%%%%%%%%%%%%%%%%%%%

\typeout{-------------  Right-angled Artin groups ----------------------}

\section{Right-angled Artin groups}
\label{sec:right-angled_Artin_groups}

In this section we want to compute the group homology, the algebraic $K$- and $L$-theory,
and the topological $K$-theory of a right-angled Artin group $W$.  Recall that a
\emph{right-angled Artin group} is a graph product $W = W(X,\calw)$ for which each of the
groups $W_v$ is infinite cyclic. Note that $W$ is torsionfree. Right-angled Artin groups satisfy the Baum--Connes Conjecture and the Baum--Connes Conjecture for the maximal group $C^*$-algebra, which follows from~\cite{Davis-Januszkiewicz(2000)} and~\cite{Higson-Kasparov(2001)}. Both the $K$-theoretic and the $L$-theoretic Farrell--Jones Conjecture are
satisfied for right-angled Artin groups,
see~\cite{Bartels-Lueck(2012annals)} and~\cite{Wegner(2012)}.
For general information
about right-angled Artin groups we refer for instance to Charney~\cite{Charney(2007)}.

In the sequel we denote by $r_k$ the number of $k$-simplices in $\calp = \calp(\Sigma)$ and put $r = |\calp| = \sum_{k = -1}^{\dim(\Sigma)} r_k$.
Recall that the empty simplex is allowed in $\calp$ and has dimension $-1$.

%%%%%%%%%%%%%%%%%%%%%%%%%%%%%%%%%%%%%%%%%%%%%%%%%%%%%%%%%%%%%%%%%%%%%

Let $\calk_*$ be a (non-equi\-va\-riant) generalized homology theory with values in
$\Lambda$-modules.  Let $X$ be a $CW$-complex. 
It follows from the axioms of a generalized homology theory that there is an isomorphism, natural in $X$
\begin{equation*}
  B_n(X) :=\calk_n(\pr) \times \bigl(s_n \circ \calk_n(\id_X \times i)\bigr) \colon \calk_n(X \times S^1)
  \xrightarrow{\cong} \calk_n(X) \times \calk_{n-1}(X),
%\label{iso_B_n(X)}
\end{equation*}
where we denote by $\pr \colon X \times S^1 \to X$ the projection, by $i \colon S^1 = (S^1,\emptyset) \to (S^1,\pt)$ the inclusion, and by $s_n \colon \calk_n(X \times (S^1,\{\pt\})) \xrightarrow{\cong} \calk_{n-1}(X)$ the suspension isomorphism.

By induction over $k \ge 0$ we obtain an isomorphism
\begin{equation}
  B_n^k \colon \calk_n(T^k) \xrightarrow{\cong} \prod_{i = 0}^k\;
\prod_{j = 1}^{\binom{k}{i}} \calk_{n-i}(\pt), 
  \label{homological_Bass-Heller_Swan}
\end{equation}where we denote by $T^k$ the $k$-dimensional torus $\prod_{i=1}^k S^1$. Note that $T^0 = \pt$.
For $i =1,2, \ldots, k$, let $T^k_i \subseteq T^k$ be the subspace consisting of elements $(z_1,z_2, \ldots, z_k)$ with $z_i = \ast$, where $\ast$ is a fixed base point in $S^1$. Let $j_i^k \colon T^k_i \to T^k$ be the inclusion.
We will identify $T_k^k = T^{k-1}$. 

\begin{lemma}\label{lem:top_component}
  For every $k \ge 0$ and $n \in \IZ$ there is an isomorphism 
  \[
   c_n^k \colon \cok\left(\bigoplus_{i = 1}^k \calk_n(j^k_i) \colon \bigoplus_{i = 1}^k \calk_n(T^k_i)
    \to  \calk_n(T^k) \right)
    \xrightarrow{\cong} \calk_{n-k}(\pt).
  \]
  Its inverse is induced by the restriction of the inverse of the isomorphism $B_n^k$
  of~\eqref{homological_Bass-Heller_Swan} to the factor $\calk_{n-k}(\pt)$ for the index $i = k$.
\end{lemma}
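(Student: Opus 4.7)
The plan is to argue by induction on $k \ge 0$, exploiting the naturality of the Bass--Heller--Swan splitting $B_n(-)$ and the inductive construction of $B_n^k$.

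The base case $k = 0$ is trivial: $T^0 = \pt$, the direct sum is empty, and the cokernel is $\calk_n(\pt) = \calk_{n-0}(\pt)$. For the inductive step I write $T^k = T^{k-1} \times S^1$, where the last $S^1$-factor corresponds to the index $i = k$. The isomorphism $B_n(T^{k-1}) \colon \calk_n(T^k) \xrightarrow{\cong} \calk_n(T^{k-1}) \times \calk_{n-1}(T^{k-1})$ is natural in $T^{k-1}$, so for $i = 1, \ldots, k-1$ the inclusion $T^k_i = T^{k-1}_i \times S^1 \hookrightarrow T^k$ is identified with the product map $\calk_n(j_i^{k-1}) \times \calk_{n-1}(j_i^{k-1})$. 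For $i = k$ the inclusion $j_k^k \colon T^{k-1} = T^k_k \hookrightarrow T^k$ lands in the basepoint of the last coordinate, so its composite with $\id \times i \colon T^{k-1} \times S^1 \to T^{k-1} \times (S^1, \pt)$ factors through the relative pair $T^{k-1} \times (\pt, \pt)$, which has trivial relative $\calk_n$. Hence after the splitting $\calk_n(j_k^k)$ becomes $(\id, 0) \colon \calk_n(T^{k-1}) \to \calk_n(T^{k-1}) \times \calk_{n-1}(T^{k-1})$.

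Consequently, the image of $\bigoplus_{i=1}^{k} \calk_n(j_i^k)$ inside $\calk_n(T^{k-1}) \times \calk_{n-1}(T^{k-1})$ equals $\calk_n(T^{k-1}) \times \im\bigl(\bigoplus_{i=1}^{k-1} \calk_{n-1}(j_i^{k-1})\bigr)$: the $i = k$ summand surjects onto the first factor, and taking the $\calk_n(T^{k-1}_i)$-components of the $i < k$ summands to be zero produces the full $\{0\} \times \im\bigl(\bigoplus_{i=1}^{k-1} \calk_{n-1}(j_i^{k-1})\bigr)$. Passing to cokernels and applying the induction hypothesis to $\calk_{n-1}$ in dimension $k-1$ yields
\[
\cok\Bigl(\bigoplus_{i=1}^{k} \calk_n(j_i^k)\Bigr) \;\cong\; \cok\Bigl(\bigoplus_{i=1}^{k-1} \calk_{n-1}(j_i^{k-1})\Bigr) \;\cong\; \calk_{(n-1)-(k-1)}(\pt) \;=\; \calk_{n-k}(\pt).
\]
Moreover, the iterative definition of $B_n^k$ proceeds by first splitting off the last coordinate via $B_n(T^{k-1})$ and then applying $B_{n-\ast}^{k-1}$ to each factor; tracing this shows that the unique factor for $i = k$ in~\eqref{homological_Bass-Heller_Swan} is obtained by projecting onto the $\calk_{n-1}(T^{k-1})$ summand and then onto the top factor of $B_{n-1}^{k-1}$. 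Restricting $(B_n^k)^{-1}$ to this summand therefore produces a section of the projection $\calk_n(T^k) \to \cok$, giving the claimed inverse.

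The main technical point is the combinatorial bookkeeping: one must identify the $i = k$ factor in the iterated splitting with the summand missing from the image, which requires keeping careful track of how the inductive construction of $B_n^k$ nests. Apart from this, the argument relies only on naturality of $B_n$ and functoriality of $\calk_\ast$.
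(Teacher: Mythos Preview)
Your proof is correct and follows essentially the same approach as the paper. Both argue by induction on $k$, split $\calk_n(T^k)$ via $B_n(T^{k-1})$, observe that under this splitting the $i=k$ inclusion becomes $(\id,0)$ while the $i<k$ inclusions become products $\calk_n(j_i^{k-1}) \times \calk_{n-1}(j_i^{k-1})$, and conclude that the cokernel reduces to $\cok\bigl(\bigoplus_{i=1}^{k-1}\calk_{n-1}(j_i^{k-1})\bigr)$; the paper packages this in a large commutative diagram with a block-diagonal matrix $\begin{pmatrix} f_0 & 0 \\ 0 & f_1 \end{pmatrix}$ where $f_0$ is surjective, while you compute the image directly, but the content is identical.
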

\begin{proof} We use induction over $k = 0,1,2, \ldots$. If $k = 0$, take
  $c_n^0 = \id_{\calk_{n}(\pt)}$.  The induction step from $(k-1)$ to $k \ge 1$ is done as
  follows.  We have the following commutative diagram of $\Lambda$-modules
  \[
    \xymatrix@!C=18em{\bigoplus_{i =1}^k \calk_n(T^k_i)
      \ar[r]^-{\bigoplus_{i =1}^k \calk_n(j^k_i)} \ar[d]_{\id}^{\cong}
    &
    \calk_n(T^k) \ar[dd]^{\id}_{\cong}
    \\
    \calk_n(T^k_k) \oplus \bigoplus_{i =1}^{k-1} \calk_n(T^k_i)
    \ar[d]_{\id \oplus \bigoplus_{i =1}^{k-1} \id}^{\cong}
    &
    \\
    \calk_n(T^{k-1}) \oplus \bigoplus_{i =1}^{k-1} \calk_n(T^{k-1}_i \times S^1)
    \ar[d]_{\id \oplus \bigoplus_{i =1}^{k-1} B_n(T^{k-1}_i)}^{\cong}
    &
    \calk_n(T^{k-1} \times S^1) \ar[dd]^{B_n(T^{k-1})}_{\cong}
    \\
    \calk_n(T^{k-1}) \oplus \bigoplus_{i =1}^{k-1} \bigl(\calk_n(T^{k-1}_i)
    \oplus \calk_{n-1}(T^{k-1}_i)\bigr) \ar[d]_-{f}^-{\cong}
    &
    \\
    {\begin{array}{c}
    \left(\calk_n(T^{k-1}) \oplus \bigoplus_{i =1}^{k-1} \calk_n(T^{k-1}_i)\right)
    \\
    \oplus 
    \\
    \bigoplus_{i =1}^{k-1}  \calk_{n-1}(T^{k-1}_i)
    \end{array}}
    \ar[r]_-{\begin{pmatrix} f_0 & 0\\ 0 & f_1 \end{pmatrix}}
    &
    {\begin{array}{c}
    \calk_n(T^{k-1})
    \\
    \oplus
    \\
    \calk_{n-1}(T^{k-1})
    \end{array}}
  }
    \]
    where $f$ is the obvious isomorphism,
    $f_0 \coloneqq \id_{\calk_n(T^{k-1})} \oplus \bigoplus_{i =1}^{k-1}\calk_n(
      j_i^{k-1})$, and
    $f_1 \coloneqq \bigoplus_{i =1}^{k-1} \calk_{n-1}(j^{k-1}_i)$. Since $f_0$ is surjective, the
    diagram above induces an isomorphism
    \[\cok\left(\bigoplus_{i =1}^k \calk_n(j^k_i)\right)
      \xrightarrow{\cong} \cok(f_0) \oplus \cok(f_1)
      = \{0\} \oplus \cok\left(\bigoplus_{i =1}^{k-1} \calk_{n-1}(j^{k-1}_i)\right).
   \]
   By induction hypothesis we have an isomorphism
   \[
     c^{k-1}_{n-1}\colon \cok\left(\bigoplus_{i =1}^{k-1} \calk_{n-1}(j^{k-1}_i)\right)
     \xrightarrow{\cong} \calk_{n-k}(\pt).
   \]
   This finishes the induction step and hence the proof of
   Lemma~\ref{lem:top_component}.
 \end{proof}

 %%%%%%%%%%%%%%%%%%%%%%%%%%%%%%%%%%%%%%%%%%%%%%%%%%%%%%%%%%%%%%%%%%%%%

\subsection{Group homology}
\label{subsec:Group_homology_of_right-angled_Artin_groups}

Let $\calk_*$ be any generalized homology theory with values in $\Lambda$-modules. Notice
that for any group $G$ the $CW$-complex $EG \times_G \eub{G}$ is a model for $BG$ since $\eub{G}$ is contractible after
forgetting the $G$-action.  We have introduced the equivariant homology theory
given by the Borel construction and $\calk_*$ in
Example~\ref{exa:Borel_homology}. We conclude from
Theorem~\ref{the:main_theorem} and Lemma~\ref{lem:top_component} that there is an explicit
$\Lambda$-isomorphism
\[
\bigoplus_{ \sigma \in \calp} \calk_{n- \dim(\sigma)-1}(\pt) \xrightarrow{\cong} \calk_n(BW).
\]

If we take for $\calk_*$ singular homology $H_*(-;\Lambda)$ with coefficients in $\Lambda$,
this boils down to the well-known, see for example~\cite[Corollary~11]{Kim-Roush(1980)}, isomorphism of $\Lambda$-modules
\[
 \Lambda^{r_{n-1}} \xrightarrow{\cong} H_n(BW;\Lambda).
\]
In particular we get the following relation for  the Euler characteristics
\[
\chi(BW) = 1- \chi(\Sigma).
\]

%%%%%%%%%%%%%%%%%%%%%%%%%%%%%%%%%%%%%%%%%%%%%%%%%%%%%%%%%%%%%%%%%%%%%

\subsection{Algebraic $K$-theory}
\label{subsec:Algebraic_K-theory_of_right-angled_Artin_groups}
Let $R$ be a regular ring. 
We conclude from
Theorem~\ref{the:main_theorem},
Theorem~\ref{the:passage_from_calfin_to_calvycy}~%
\ref{the:passage_from_calfin_to_calvycy:special_K-theory}, and Lemma~\ref{lem:top_component}
that there is an explicit isomorphism of abelian groups
\[
\bigoplus_{ \sigma \in \calp} K_{n- \dim(\sigma) - 1}(R) \xrightarrow{\cong} K_n(RW).
\]

Its restriction to the summand belonging to $\sigma$ is the composite of the map
$K_n(RW(\sigma)) \to K_n(RW)$ coming from the inclusion $\IZ^{\dim(\sigma)+1} = W(\sigma) \to W$
with the restriction of the inverse of the iterated Bass-Heller-Swan isomorphism
\[
\bigoplus_{i = 0}^{\dim(\sigma)+1} \bigoplus_{j = 1}^{\binom{\dim(\sigma)+1}{i}} K_{n-i}(R) 
\xrightarrow{\cong} K_n(R[\IZ^{\dim(\sigma)+1}])
\]
to the summand $K_{n- \dim(\sigma)-1}(R)$ belonging to $i = \dim(\sigma)+1$.

Since for a regular ring $R$ its negative $K$-theory vanishes, we conclude
$K_n(RW) = 0$ for $n \le -1$. If we take $R = \IZ$, we conclude
that $K_n(\IZ W)$ for $n \le -1$, $\widetilde{K}_0(\IZ W)$, and $\Wh(W)$ vanish what is actually true
if we replace $W$ by any torsionfree group satisfying the Farrell--Jones Conjecture.

%%%%%%%%%%%%%%%%%%%%%%%%%%%%%%%%%%%%%%%%%%%%%%%%%%%%%%%%%%%%%%%%%%%%%

\subsection{Algebraic $L$-theory}
\label{subsec:Algebraic_L-theory_of_right-angled_Artin_groups}

Let $R$ be a ring with involution.  We conclude from Theorem~\ref{the:main_theorem},
Theorem~\ref{the:passage_from_calfin_to_calvycy}~%
\ref{the:passage_from_calfin_to_calvycy:special_L-theory}, and
Lemma~\ref{lem:top_component} that there is an explicit isomorphism of abelian groups
\[
  \bigoplus_{ \sigma \in \calp} L_{n- \dim(\sigma)-1}^{\langle -\infty \rangle}(R)
  \xrightarrow{\cong} L_n^{\langle -\infty \rangle}(RW).
\]

Its restriction to a summand comes from the Shaneson splitting.

%%%%%%%%%%%%%%%%%%%%%%%%%%%%%%%%%%%%%%%%%%%%%%%%%%%%%%%%%%%%%%%%%%%%%

\subsection{Topological  $K$-theory}
\label{subsec:Topological_K-theory_of_right-angled_Artin_groups}

 We conclude from Theorem~\ref{the:main_theorem}  and
Lemma~\ref{lem:top_component} that there are explicit isomorphisms of abelian groups
\begin{eqnarray*}
  \bigoplus_{\sigma \in \calp} K_{n-\dim(\sigma)-1}(\IC)
  & \xrightarrow{\cong} &
  K_n(C^*_m(W)) \cong K_n(C^*_r(W)); 
  \\
   \bigoplus_{\sigma \in \calp} KO_{n-\dim(\sigma)-1}(\IR)
  & \xrightarrow{\cong} &
   KO_n(C^*_m(W;\IR)) \cong KO_n(C^*_r(W;\IR)).
\end{eqnarray*}                      
In particular we get an isomorphism of abelian groups
\[
  K_n(C^*_m(W)) \cong K_n(C^*_r(W)) \cong \IZ^{t_n},
\]
if we put $t_n= \sum_{\substack{k \in \{-1,0,1,2,\ldots, \dim(\Sigma)\}\\ (n-k) \;\text{odd}}} r_k$.

%%%%%%%%%%%%%%%%%%%%%%%%%%%%%%%%%%%%%%%%%%%%%%%%%%%%%%%%%%%%%%%%%%%%%
%%%%%%%%%%%%%%%%%%%%%%%%%%%%%%%%%%%%%%%%%%%%%%%%%%%%%%%%%%%%%%%%%%%%%
%%%%%%%%%%%%%%%%%%%%%%%%%%%%%%%%%%%%%%%%%%%%%%%%%%%%%%%%%%%%%%%%%%%%%

\typeout{-------------  Right-angled  Coxeter groups ----------------------}

\section{Right-angled Coxeter groups}
\label{sec:right-angled_Coxeter_groups}

In this section we want to compute the group homology, the algebraic $K$- and $L$-theory,
and the topological $K$-theory of a right-angled Coxeter group $W$.  Recall that a
\emph{right-angled Coxeter group} is a graph product $W = W(X,\calw)$ for which each of the
groups $W_v$ is cyclic of order two. 
Right-angled Coxeter groups satisfy the Baum--Connes Conjecture and the Baum--Connes Conjecture for the maximal group $C^*$-algebra, which follows from~\cite{Davis-Januszkiewicz(2000)} and~\cite{Higson-Kasparov(2001)}.
Both the $K$-theoretic and the $L$-theoretic Farrell--Jones Conjecture are
satisfied for right-angled Coxeter groups,
see~\cite{Bartels-Lueck(2012annals)} and~\cite{Wegner(2012)}.

In the sequel we denote by $r_k$ the number of $k$-simplices in $\calp=\calp(\Sigma)$ and put
$r = |\calp| = \sum_{k = -1}^{\dim(\Sigma)} r_k$.  Recall that the empty simplex is
allowed in $\calp$ and has dimension $-1$.

During this section we denote by $C_2$ the cyclic group of order two.
Fix an integer $k \ge 1$. We will identify $C_2^k = C_2^{k-1} \times C_2$ and put $C_2^0=\{1\}$. For $i=1,2,\ldots,k$, let $(C_2^k)_i$ be the subgroup of $C_2^k$ consisting of those elements $(a_1,a_2,\ldots,a_k)$ satisfying $a_i=0$ and denote by $j_i^k\colon (C_2^k)_i\to C_2^k$ the inclusion.

%%%%%%%%%%%%%%%%%%%%%%%%%%%%%%%%%%%%%%%%%%%%%%%%%%%%%%%%%%%%%%%%%%%%%

\subsection{Group homology}
\label{subsec:Group_homology}

Define for $n \ge 0$ and $k \ge 0$ an integer
\begin{equation*}
\rho_{n,k} \coloneqq \sum_{j = k}^{n-1} (-1)^{n-1-j} \cdot \binom{j}{k},
%\label{rho_(n,k)}
\end{equation*}
where here and in the sequel we use the convention  $\sum_{j = a}^b c_j = 0$ for $a > b$. 
\begin{theorem}\label{the:group_homology_Coxeter_groups}
  We have for $n \ge 1$
  \[
  H_n(W;\IZ) \cong \bigoplus_{\sigma \in \calp} \bigoplus_{j=1}^{\rho_{n,\dim(\sigma)}}C_2.
  \]
\end{theorem}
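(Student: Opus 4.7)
The plan is to apply Theorem~\ref{the:main_theorem}~\ref{the:main_theorem:splitting_the_colimit_iso_T} to the equivariant Borel homology associated to singular $\IZ$-homology (Example~\ref{exa:Borel_homology}) and the family $\calfin$. Since $EW\times\eub{W}$ is free and contractible, $\calh_n^W(\eub{W})\cong H_n(W;\IZ)$, so the theorem yields
\[
H_n(W;\IZ) \;\cong\; \bigoplus_{\sigma\in\calp} S_\sigma\bigl(\calh^{\calfin}_n\circ W_*\circ I\bigr).
\]
The empty simplex contributes nothing for $n\geq 1$ since $H_n(\{1\};\IZ)=0$. For $\sigma$ of dimension $k\geq 0$ one has $W(\sigma)=C_2^{k+1}$, so it suffices to show that, setting
\[
s_n(m)\coloneqq\dim_{\IF_2}\cok\Bigl(\textstyle\bigoplus_{i=1}^m H_n((C_2^m)_i;\IZ)\to H_n(C_2^m;\IZ)\Bigr),
\]
one has $s_n(m)=\rho_{n,m-1}$ for all $m,n\geq 1$.

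I would carry this out via the cellular chain model for $B(C_2^m)=(\IR P^{\infty})^m$. With $\IR P^{\infty}$ equipped with its minimal $CW$-structure ($d(e_{2j})=2e_{2j-1}$, $d(e_{2j+1})=0$), the product chain complex $B_*$ has basis $\{e_{a_1}\otimes\cdots\otimes e_{a_m}:\sum a_i=n\}$ in degree $n$, and the inclusion $(C_2^m)_i\hookrightarrow C_2^m$ corresponds to basepoint inclusion in the $i$-th factor. Let $A_*\coloneqq\sum_i C_*(B((C_2^m)_i))$ be the subcomplex of basis vectors having some $a_i=0$, and $Q_*\coloneqq B_*/A_*$ the quotient. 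Because the differential never turns an $a_i\geq 1$ into $a_i=0$, the obvious set-theoretic section $Q_*\hookrightarrow B_*$ is in fact a chain map, so $B_*=A_*\oplus Q_*$ as chain complexes, giving $H_n(B_*)=H_n(A_*)\oplus H_n(Q_*)$. A Mayer--Vietoris argument, inductive on $m$ and exploiting that every iterated intersection of the $(C_2^m)_i$ is again a coordinate subgroup, then yields the surjectivity $\bigoplus_i H_n((C_2^m)_i;\IZ)\twoheadrightarrow H_n(A_*)$, and hence $S_n(m)\cong H_n(Q_*)$.

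The substitution $b_i=a_i-1$ identifies $Q_*$ with the $m$-fold degree shift of $D^{\otimes m}$, where $D_*$ is the chain complex $\IZ\langle g_b\rangle_{b\geq 0}$ with $d(g_b)=2g_{b-1}$ for $b$ odd and $0$ otherwise. Since $D^{\otimes m}\otimes\IF_2$ has vanishing differential and Poincar\'e series $1/(1-x)^m$, and $H_*(D^{\otimes m};\IZ)$ is $2$-torsion in every positive degree, universal coefficients force its Poincar\'e series to equal $1/((1+x)(1-x)^m)$. Multiplying by $x^m$ and comparing with
\[
\sum_{n\geq 1}\rho_{n,m-1}\,x^n \;=\; \frac{1}{1+x}\sum_{j\geq m-1}\binom{j}{m-1}\,x^{j+1} \;=\; \frac{x^m}{(1+x)(1-x)^m}
\]
gives $s_n(m)=\dim H_{n-m}(D^{\otimes m})=\rho_{n,m-1}$, completing the proof. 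I expect the main obstacle to be the surjectivity $\bigoplus_i H_n((C_2^m)_i;\IZ)\twoheadrightarrow H_n(A_*)$: immediate for $m\leq 2$ by the wedge axiom, it requires careful Mayer--Vietoris bookkeeping in higher dimensions, using that the nerve of the cover $\{B((C_2^m)_i)\}$ is a full simplex and every intersection is another classifying space of a coordinate subgroup.
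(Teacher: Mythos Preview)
Your approach is correct and genuinely different from the paper's. Two small points first. The ``main obstacle'' you flag is not one: the same observation about the differential gives the finer chain-level decomposition
\[
B_* \;=\; \bigoplus_{J\subseteq\{1,\dots,m\}} Q_*^J,\qquad Q_*^J=\operatorname{span}\{e_{a_1}\otimes\cdots\otimes e_{a_m}:a_i\ge 1\iff i\in J\},
\]
since applying $d$ to a factor $e_{a_i}$ with $a_i\ge 1$ never produces $e_0$ and applying it to $e_0$ gives zero. Then $C_*(B((C_2^m)_i))=\bigoplus_{J\not\ni i}Q_*^J$ is a chain-level direct summand of $B_*$, so the image of $\bigoplus_i H_n((C_2^m)_i;\IZ)$ in $H_n(B_*)$ is exactly $\bigoplus_{J\ne\{1,\dots,m\}}H_n(Q_*^J)=H_n(A_*)$ and the cokernel is $H_n(Q_*)$ on the nose; no Mayer--Vietoris bookkeeping is needed. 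Second, the assertion that $H_*(D^{\otimes m};\IZ)$ is annihilated by $2$ deserves a word: either observe that $2\cdot\id_{D}$ is null-homotopic via $h(g_b)=g_{b+1}$ for $b$ even and $h(g_b)=0$ for $b$ odd, or note that $H_n(Q_*)$ is a direct summand of $H_n(C_2^m;\IZ)$, which is elementary abelian for $n\ge 1$ by K\"unneth.

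By contrast, the paper never touches a chain complex. It first computes $t_{n,k}=\dim_{\IF_2}H_n(C_2^k;\IZ)$ inductively via the K\"unneth formula (Lemma~\ref{lem:group_homology_C2k}), obtaining $t_{n,k}=\sum_{j=1}^k\binom{k}{j}\rho_{n,j-1}$, and then applies the Mackey splitting of Theorem~\ref{the:main_theorem} a \emph{second} time, now to the full simplex on $k$ vertices, to get $t_{n,k}=\sum_{j=0}^k\binom{k}{j}s_{n,j}$; subtracting the two identities solves for $s_{n,k}=\rho_{n,k-1}$ (Lemma~\ref{lem:s_versus_rho}). Your argument trades this double use of the abstract splitting for a single explicit chain-level splitting plus a generating-function identity; it is more hands-on but has the advantage of exhibiting the decomposition $B_*=\bigoplus_J Q_*^J$ as a chain-level incarnation of the Mackey decomposition, with $H_n(Q_*^J)\cong S_{\sigma_J}$ for the face $\sigma_J$ of the simplex.
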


Its proof needs some preparation. Firstly, the numbers $\rho_{n,k} $ satisfy the following.

\begin{eqnarray}
\rho_{n,0}
& = &
\begin{cases}
   1 &  \quad \text{for}\; n \ge 1,  n\; \text{odd};
   \\
   0 &  \quad \text{for}\; n \ge 0,  n\; \text{even};
  \end{cases}
\label{rho_(n,0)}
  \\
  \rho_{n,k}
  & = &
 \rho_{n-1,k-1} + \rho_{n-1,k} \quad \text{for}\; k,n \ge 1;
  \label{rho_(n,k)_two_summands}
  \\
  \rho_{n,k}
 & = &
\sum_{i= 1}^{n-1} \rho_{i,k-1}\quad  \text{for}\; n \ge 0 \; \text{and} \; k \ge 1.
\label{rho_(n,k)_total}   
\end{eqnarray}
Equation~\eqref{rho_(n,0)} follows directly from the definition and equation~\eqref{rho_(n,k)_two_summands} follows from an easy calculation. Then equation~\eqref{rho_(n,k)_total} follows by induction from equation~\eqref{rho_(n,k)_two_summands}.

\begin{lemma}\label{lem:group_homology_C2k}
	We have for $n \ge 1$ and $k \ge 0$
	\[
		H_n(C_2^k;\IZ) \cong \bigoplus_{i=1}^{t_{n,k}} C_2
	\]
	with $t_{n,k} = \sum_{j=1}^k \binom{k}{j} \cdot \rho_{n,j-1}$.
\end{lemma}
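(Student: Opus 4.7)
Plan. The plan is to bypass Theorem~\ref{the:main_theorem} for this intermediate lemma and compute $H_n(C_2^k;\IZ)$ directly via its reduction modulo $2$ together with the universal coefficient theorem. First, an induction on $k$ using the K\"unneth formula shows that for every $n\ge 1$ the group $H_n(C_2^k;\IZ)$ is a finite elementary abelian $2$-group: the only $\IZ$-summand across all factors sits in bidegree $(0,0)$, and tensor and Tor of copies of $C_2$ produce only copies of $C_2$, so no $C_4$ extension can arise. Let $t_{n,k}$ denote the number of $C_2$-summands in $H_n(C_2^k;\IZ)$.

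Next, the K\"unneth formula over $\IF_2$ combined with $H^*(C_2;\IF_2)=\IF_2[x]$, $\deg x=1$, gives $H^*(C_2^k;\IF_2)\cong \IF_2[x_1,\ldots,x_k]$ with each generator in degree one. In particular $\dim_{\IF_2}H_n(C_2^k;\IF_2)=\binom{n+k-1}{k-1}$. Feeding the elementary abelian identification into the universal coefficient theorem yields
\begin{equation*}
\binom{n+k-1}{k-1}\;=\;t_{n,k}+t_{n-1,k}\qquad (n\ge 1),
\end{equation*}
with the convention $t_{0,k}=0$ (since $H_0(C_2^k;\IZ)=\IZ$ carries no $C_2$-summand). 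This uniquely determines the sequence $\{t_{n,k}\}_{n\ge 0}$.

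It then suffices to check that the proposed $\widetilde{t}_{n,k}:=\sum_{j=1}^k\binom{k}{j}\rho_{n,j-1}$ satisfies the same initial condition and recursion. A telescoping of the alternating sum defining $\rho_{n,m}$ yields the identity
\begin{equation*}
\rho_{n,m}+\rho_{n-1,m}\;=\;\binom{n-1}{m}\qquad (n\ge 1,\ m\ge 0),
\end{equation*}
and $\widetilde{t}_{0,k}=0$ because each $\rho_{0,m}$ is an empty sum. Therefore, by Vandermonde's identity,
\begin{equation*}
\widetilde{t}_{n,k}+\widetilde{t}_{n-1,k}\;=\;\sum_{j=1}^k\binom{k}{j}\binom{n-1}{j-1}\;=\;\binom{n+k-1}{k-1},
\end{equation*}
so $\widetilde{t}_{n,k}=t_{n,k}$ and the lemma follows.

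The main obstacle is the combinatorial matching in the last paragraph; the topological input (K\"unneth and UCT) is routine. A minor point worth checking carefully in the first step is that the iterated K\"unneth short exact sequences really do split as sums of $C_2$'s in positive total degree, which is automatic since each individual tensor and Tor summand is either a $\IZ$ (only in total degree $0$) or a $C_2$, and so no nontrivial extensions can appear.
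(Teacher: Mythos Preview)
Your proof is correct and takes a genuinely different route from the paper's. The paper proceeds by induction on $k$ via the integral K\"unneth formula, arriving at the recursion $t_{n,k}=t_{n,1}+\sum_{i=1}^n t_{i,k-1}$, and then verifies this combinatorially using the identities~\eqref{rho_(n,0)},~\eqref{rho_(n,k)_two_summands},~\eqref{rho_(n,k)_total} for $\rho$. You instead pass to $\IF_2$-coefficients, exploit the polynomial description $H^*(C_2^k;\IF_2)\cong\IF_2[x_1,\ldots,x_k]$ to get the closed form $\dim_{\IF_2}H_n(C_2^k;\IF_2)=\binom{n+k-1}{k-1}$, and then the universal coefficient theorem converts this into a recursion in $n$ rather than $k$, namely $t_{n,k}+t_{n-1,k}=\binom{n+k-1}{k-1}$. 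The combinatorics then collapses to the single telescoping identity $\rho_{n,m}+\rho_{n-1,m}=\binom{n-1}{m}$ together with Vandermonde. Your argument is arguably cleaner and avoids the longer chain of $\rho$-manipulations in the paper; the paper's approach, on the other hand, stays entirely within integral coefficients and mirrors the $C_2^{k-1}\times C_2$ decomposition used elsewhere. One small point: when you say ``no nontrivial extensions can appear'' in the first step, the reason is that the K\"unneth short exact sequence over $\IZ$ always splits (non-naturally), not merely that the outer terms are sums of $C_2$'s; it would be worth stating this explicitly.
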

\begin{proof}
	The assertion is obviously true for $k = 0$.
	The induction step from $k-1$ to $k \ge 1$ is done as follows.
	Recall that $H_n(C_2;\IZ)$ is $\IZ$ if $n = 0$, $C_2$ if $n \ge 1 $ and $n$ is odd, and $\{0\}$ otherwise. 
	The K\"unneth formula gives the following short exact sequence of $\IZ$-modules, which is natural in $C_2^{k-1}$
	\begin{multline*}
  		0 \to \bigoplus_{i+j = n} H_i(C_2^{k-1};\IZ) \otimes_{\IZ} H_j(C_2;\IZ) \to H_n(C_2^{k-1} \times C_2;\IZ)
  		\\
  		\to \bigoplus_{i+j = n-1} \Tor_1^{\IZ}(H_i(C_2^{k-1};\IZ),H_j(C_2;\IZ)) \to  0.
	\end{multline*}
	It splits but the spitting is not natural in $C_2^{k-1}$.
	By rearranging the summands we obtain an isomorphism of $\IZ$-modules
	\[
		H_n(C_2^k;\IZ) \cong H_n(C_2;\IZ) \oplus \bigoplus_{i = 1}^n H_i(C_2^{k-1};\IZ).
	\]
	Using the induction hypothesis we calculate
	\begin{eqnarray*}
 		t_{n,k}
  		& = &
  		t_{n,1} + \sum_{i = 1}^n t_{i,k-1}
  		\\
		& \stackrel{\eqref{rho_(n,0)}}{=}  &  		
		\rho_{n,0} + \sum_{i = 1}^n \sum_{j = 1}^{k-1}  \binom{k-1}{j} \cdot \rho_{i,j-1}
  \\
  & = &
     \rho_{n,0} + \sum_{j = 1}^{k-1} \binom{k-1}{j} \cdot \sum_{i = 1}^{n-1}  \rho_{i,j-1} + \sum_{j = 1}^{k-1} \binom{k-1}{j} \cdot \rho_{n,j-1}
  \\
  & \stackrel{\eqref{rho_(n,k)_total}}{=} &
  \rho_{n,0} + \sum_{j = 1}^{k-1} \binom{k-1}{j} \cdot \rho_{n,j} + \sum_{j = 1}^{k-1} \binom{k-1}{j} \cdot \rho_{n,j-1}
  \\
  & = &
  \rho_{n,0} + \sum_{j = 2}^{k} \binom{k-1}{j-1} \cdot \rho_{n,j-1} + \sum_{j = 1}^{k-1} \binom{k-1}{j} \cdot  \rho_{n,j-1} 
  \\
  & = &
  \sum_{j = 1}^{k} \binom{k-1}{j-1} \cdot \rho_{n,j-1} + \sum_{j = 1}^{k} \binom{k-1}{j} \cdot \rho_{n,j-1}
  \\  
  & = &
  \sum_{j = 1}^{k} \left(\binom{k-1}{j-1}  + \binom{k-1}{j}\right) \cdot \rho_{n,j-1}
  \\
  & = &
 \sum_{j = 1}^{k}   \binom{k}{j} \cdot \rho_{n,j-1}.
\end{eqnarray*}
This finishes the proof of Lemma~\ref{lem:group_homology_C2k}.
\end{proof}

For $n \ge 1$ and $k \ge 0$ define
\[
	S_kH_n \coloneqq \cok \left( \bigoplus_{i=1}^k H_n(j_i^k;\IZ)\colon \bigoplus_{i=1}^k H_n((C_2^k)_i;\IZ) \to H_n(C_2^k;\IZ) \right).
\]
Let the integer $s_{n,k} \ge 0$ be defined by
\[
	S_kH_n \cong \bigoplus_{j=1}^{s_{n,k}} C_2.
\]

\begin{lemma}\label{lem:s_versus_rho}
	For $n \ge 1$ and $k \ge 1$ we have
	\[
		s_{n,k} = \rho_{n,k-1}
	\]
	and $s_{n,0} = 0$.
\end{lemma}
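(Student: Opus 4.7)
The plan is to induct on $k$, reducing $C_2^k$ to $C_2^{k-1} \times C_2$ via the K\"unneth formula.

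For $k = 0$ the defining sum is empty, so $S_0 H_n = H_n(\{1\}) = 0$ for $n \ge 1$, hence $s_{n, 0} = 0$. For $k = 1$ the only subgroup $(C_2)_1$ is trivial, so $S_1 H_n = H_n(C_2; \IZ)$, which is $C_2$ for $n$ odd and $0$ for $n$ even, matching $\rho_{n, 0}$ by~\eqref{rho_(n,0)}.

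For the inductive step from $k - 1$ to $k \ge 2$, I identify $C_2^k = C_2^{k-1} \times C_2$, so that $(C_2^k)_k = C_2^{k-1} \times \{1\}$ and $(C_2^k)_i = (C_2^{k-1})_i \times C_2$ for $i < k$. The K\"unneth short exact sequence is functorial, so the restriction maps respect the K\"unneth decomposition of $H_n(C_2^k)$ into summands $H_p(C_2^{k-1}) \otimes H_q(C_2)$ with $p + q = n$ and $\Tor(H_p(C_2^{k-1}), H_q(C_2))$ with $p + q = n - 1$. Two observations drive the argument. First, the image of $H_n((C_2^k)_k) \to H_n(C_2^k)$ is precisely the summand $H_n(C_2^{k-1}) \otimes H_0(C_2)$. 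Second, for $i < k$ and any K\"unneth summand, the component of $H_n((C_2^k)_i) \to H_n(C_2^k)$ is the map induced by $H_p((C_2^{k-1})_i) \to H_p(C_2^{k-1})$ tensored (respectively $\Tor$-ed) with the identity on $H_q(C_2)$. For $p = 0$ this component is the identity on $H_q(C_2)$, so the $p = 0$ summands lie entirely in the image. For $p \ge 1$ the group $H_p(C_2^{k-1})$ is $2$-torsion, so $- \otimes H_q(C_2)$ and $\Tor(-, H_q(C_2))$ with $q$ odd both act as the identity on it, and the component is identified with the restriction $H_p((C_2^{k-1})_i) \to H_p(C_2^{k-1})$ itself.

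As in the proof of Lemma~\ref{lem:group_homology_C2k}, each $H_p(C_2^{k-1})$ with $1 \le p \le n - 1$ appears in the K\"unneth decomposition of $H_n(C_2^k)$ exactly once: as $H_p(C_2^{k-1}) \otimes C_2$ when $n - p$ is odd, and as $\Tor(H_p(C_2^{k-1}), C_2)$ when $n - p$ is even. Combining this with the two observations above yields
\[
S_k H_n \cong \bigoplus_{p = 1}^{n - 1} S_{k-1} H_p,
\]
equivalently $s_{n, k} = \sum_{p = 1}^{n-1} s_{p, k-1}$. The induction hypothesis $s_{p, k - 1} = \rho_{p, k - 2}$ together with identity~\eqref{rho_(n,k)_total} then gives $s_{n, k} = \rho_{n, k - 1}$. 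The main obstacle is the K\"unneth bookkeeping, i.e., verifying that after modding out the image of $H_n((C_2^k)_k)$ together with all $p = 0$ summands, the remaining K\"unneth pieces contribute exactly $\bigoplus_{p=1}^{n-1} S_{k-1} H_p$; once this is in place,~\eqref{rho_(n,k)_total} closes the induction.
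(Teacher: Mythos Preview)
Your route is correct but genuinely different from the paper's. The paper applies Theorem~\ref{the:main_theorem} to the complete graph on $k$ vertices (so the flag complex is the simplex $\Delta^{k-1}$ and the graph product is $C_2^k$), obtaining $H_n(C_2^k;\IZ)\cong\bigoplus_{j=0}^k\binom{k}{j}\,S_jH_n$; combining this with Lemma~\ref{lem:group_homology_C2k} and the induction hypothesis gives $s_{n,k}=t_{n,k}-\sum_{j=1}^{k-1}\binom{k}{j}\rho_{n,j-1}=\rho_{n,k-1}$ in one line. You instead strip off a single $C_2$ factor via K\"unneth to produce the recursion $s_{n,k}=\sum_{p=1}^{n-1}s_{p,k-1}$ and then close with~\eqref{rho_(n,k)_total}. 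Your argument is more self-contained in that it does not invoke the Mackey-module splitting of Theorem~\ref{the:main_theorem}; the paper's argument is shorter precisely because that machinery is already in place.

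The one step in your plan that needs more than ``the K\"unneth short exact sequence is functorial'' is the passage from the natural two-step filtration to a direct-sum decomposition respected by the maps $H_n(j^k_i)$. Functoriality of the sequence gives compatibility with the filtration only; a priori the snake-lemma connecting map $\ker\gamma\to\cok\alpha$ could be nonzero, which would spoil the dimension count. The cleanest fix is to observe that the K\"unneth splitting for $A\times B$ can be chosen \emph{naturally in $A$} once $B$ is fixed (all non-canonical choices in the standard proof live on the $C_*B$ side). This handles the maps $j^k_i=(j^{k-1}_i)\times\id_{C_2}$ for $i<k$, while $j^k_k=\id_{C_2^{k-1}}\times\iota$ has trivial Tor part in its source and causes no trouble. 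With that point made explicit, your displayed isomorphism $S_kH_n\cong\bigoplus_{p=1}^{n-1}S_{k-1}H_p$ follows and the induction goes through.
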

\begin{proof}
	Since $H_n(\{1\};\IZ)=\{0\}$ for $n \ge 1$, we have $s_{n,0}=0$.
	The induction step from $k-1$ to $k\ge 1$ is done as follows.
	Theorem~\ref{the:main_theorem} yields an isomorphism
	\[
		\bigoplus_{j=0}^k \bigoplus_{i=1}^{\binom{k}{j}} S_jH_n \cong H_n(C_2^k;\IZ).
	\]
	Using the induction hypothesis and Lemma~\ref{lem:group_homology_C2k} we conclude
	\begin{eqnarray*}
		s_{n,k}
		& = &
		t_{n,k} - \sum_{j=0}^{k-1} \binom{k}{j} \cdot s_{n,j}
		\\
		& = &
		\sum_{j=1}^k \binom{k}{j} \rho_{n,j-1} - \sum_{j=1}^{k-1} \binom{k}{j} \cdot \rho_{n,j-1}
		\\
		& = &
		\rho_{n,k-1}.
	\end{eqnarray*}
	This finishes the proof of Lemma~\ref{lem:s_versus_rho}.
\end{proof}

Now Theorem~\ref{the:group_homology_Coxeter_groups} follows from
Theorem~\ref{the:main_theorem} applied to the equivariant homology theory given by Borel homology and singular homology with $\IZ$-coefficients, see
Example~\ref{exa:Borel_homology}, and from
Lemma~\ref{lem:s_versus_rho}. Here we use the fact that for any group $G$ the space
$EG \times_G \eub{G}$ is a model for $BG$.

\begin{remark}\label{rem:replacing_Z/2_byZ/2_upperl}
  If we replace in this subsection $C_2 = \IZ/2$ everywhere by $\IZ/p^l$ for some prime
  number $p$ and some natural number $l$, then Theorem~\ref{the:group_homology_Coxeter_groups}
  remains true. This follows from  two facts.  Since $\IZ/p^l$ is a local ring, we conclude
  from~\cite[Lemma~1.2 on page~5]{Milnor(1971)} that
  for every natural number $a$, every summand of the abelian
group $\bigoplus_{i = 1}^a \IZ/p^l$ is isomorphic to $\bigoplus_{j = 1}^b \IZ/p^l$ for some
natural number $b$. The group homology $H_n(\IZ/p^l;\IZ)$ is isomorphic to $\IZ/p^l$ if $n$ is
odd and vanishes for even $n$ with $n \ge 2$.
\end{remark}

\subsection{Negative $K$-groups for $R = \IZ$}
\label{subsec:negative_K_groups_for_Coxeter_groups}

\begin{theorem}%[Negative algebraic $K$-theory]
	\label{the:negative_alg_K-theory_of_Coxeter_groups}
  We have $K_n(\IZ W) =\{0\}$ for $n \le -1$.
\end{theorem}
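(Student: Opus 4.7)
The plan is to combine the $K$-theoretic Farrell--Jones Conjecture for right-angled Coxeter groups with Theorem~\ref{the:main_theorem} applied to the equivariant homology theory $H_*^?(-;\bfK_{\IZ})$. Since right-angled Coxeter groups satisfy the $K$-theoretic Farrell--Jones Conjecture, it suffices to prove that $H_n^W(\edub{W};\bfK_{\IZ}) = 0$ for $n \le -1$. I will reduce first from $\edub{W}$ to $\eub{W}$ and then use Theorem~\ref{the:main_theorem}\ref{the:main_theorem:iso_T} to reduce to a calculation for finite elementary abelian $2$-groups.

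For the reduction to the family $\calfin$, I would use Theorem~\ref{the:passage_from_calfin_to_calvycy}\ref{the:passage_from_calfin_to_calvycy:split_injective} to split off $H_n^W(\eub{W};\bfK_{\IZ})$ as a direct summand of $H_n^W(\edub{W};\bfK_{\IZ})$, and then identify the cokernel. Following the strategy in the proof of Theorem~\ref{the:passage_from_calfin_to_calvycy}\ref{the:passage_from_calfin_to_calvycy:special_K-theory}, this cokernel is a sum of Nil-terms $N\!K_n(\IZ K_H; \phi)$ indexed by conjugacy classes of maximal infinite virtually cyclic subgroups $H \subseteq W$. Because every finite subgroup of $W$ is conjugate into some $W(\sigma) \cong (C_2)^{\dim(\sigma)+1}$, each $K_H$ is a finite $2$-group, and the retraction argument of that proof shows that the twisting automorphism $\phi$ has order dividing $2$. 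In the range $n \le -1$, the negative Nil-groups of group rings of finite groups vanish (a consequence of Farrell's theorem together with the finite generation of negative $K$-groups of $\IZ$-orders), so the cokernel vanishes and the passage map is an isomorphism in this range.

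Next, I would apply Theorem~\ref{the:main_theorem}\ref{the:main_theorem:iso_T} with $\calc = \calfin$ to obtain
\[
 H_n^W(\eub{W};\bfK_{\IZ}) \;\cong\; \colim_{\sigma \in \calp} K_n(\IZ W(\sigma)),
\]
where $W(\sigma)$ is either trivial (if $\sigma = \emptyset$) or the elementary abelian $2$-group $(C_2)^{\dim(\sigma)+1}$. By Carter's theorem, $K_n(\IZ G) = 0$ for $n \le -2$ and every finite group $G$. For $n = -1$, Carter's explicit formula for $K_{-1}(\IZ G)$ in terms of representation-theoretic data vanishes when $G = (C_2)^k$, since every irreducible complex character of $(C_2)^k$ takes values in $\{\pm 1\}$ and is therefore already rational, so the free rank and the $p$-adic contributions in Carter's formula are all zero. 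Thus each term in the colimit vanishes, and the colimit vanishes.

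The main obstacle will be establishing the vanishing of the $NK$-contributions integrally, i.e.\ \emph{without} inverting $2$. The result in Theorem~\ref{the:passage_from_calfin_to_calvycy}\ref{the:passage_from_calfin_to_calvycy:special_K-theory} only gives an isomorphism after inverting $d = 2$, which is not sufficient here; promoting this to an integral statement in negative degrees requires the sharper vanishing of $N\!K_n(\IZ K_H;\phi)$ for $n \le -1$ when $K_H$ is a finite $2$-group and $\phi$ has order at most $2$, which does not follow from Theorem~9.4 of~\cite{Lueck-Steimle(2016splitasmb)} directly but from general negative-degree vanishing of Nil. The secondary obstacle is the explicit computation $K_{-1}(\IZ[(C_2)^k]) = 0$, which is standard but uses Carter's formula non-trivially.
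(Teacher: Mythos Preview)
Your strategy is valid in outline, but the paper's proof is considerably shorter and avoids both of the obstacles you flag. The paper does \emph{not} use Theorem~\ref{the:main_theorem} here at all. Instead, it cites \cite[page~749]{Lueck-Reich(2005)} for the statement that, once the Farrell--Jones Conjecture holds for $W$, one has $K_n(\IZ W)=0$ for $n\le -2$ and an isomorphism
\[
\colim_{H \in \SubGF{W}{\calfin}} K_{-1}(\IZ H)\;\xrightarrow{\cong}\; K_{-1}(\IZ W),
\]
where the colimit runs over \emph{all} finite subgroups of $W$, not just the $W(\sigma)$. Since every finite subgroup of $W$ is isomorphic to $(\IZ/2)^k$, and $K_{-1}(\IZ A)=0$ for finite abelian $p$-groups $A$ by Bass~\cite[Theorem~10.6]{Bass(1968)} or Carter~\cite{Carter(1980)}, the colimit vanishes and the proof is complete. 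Your detour through Theorem~\ref{the:main_theorem} to reduce to the $W(\sigma)$'s is harmless but unnecessary: the cruder colimit over all finite subgroups already does the job, and you only need the structure of finite subgroups of $W$, not the simplicial bookkeeping.

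More importantly, the cited result from L\"uck--Reich already packages the integral passage from $\calvcyc$ to $\calfin$ in negative degrees, which is exactly the step you identify as the main obstacle. Your proposed justification for that step---Farrell's dichotomy for $NK$ combined with finite generation of negative $K$-groups of $\IZ$-orders---is not quite right as stated: $NK_n(\IZ K_H)$ is not itself a $K$-group of a $\IZ$-order (it sits inside $K_n(\IZ K_H[t])$, and $\IZ K_H[t]$ is not a $\IZ$-order), so finite generation is not immediate from the order theory. The vanishing of the relevant Nil-contributions in degrees $n\le -1$ is true, but its proof goes through different arguments (essentially the content behind the L\"uck--Reich citation), not the Farrell/finite-generation combination you sketch. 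So while your endpoint is correct, you would end up re-deriving, with some effort, precisely the result the paper simply quotes.
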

\begin{proof}
  Since right-angled Coxeter groups satisfy the Farrell--Jones Conjecture, we get $K_n(\IZ W) = 0$ for $n \le -2$ and an isomorphism
  \[
    \colim_{H \in \SubGF{W}{\calfin}} K_{-1}(\IZ H) \to K_{-1}(\IZ W)
  \]
  from~\cite[page~749]{Lueck-Reich(2005)}.
 
  Since any finite subgroup of $W$ is isomorphic to $(\IZ/2)^k$ for some natural number $k$,
  and $K_{-1}(\IZ A) = 0$ holds for a finite abelian group whose order is a prime power,
  see~\cite[Theorem~10.6 on page~695]{Bass(1968)} or~\cite{Carter(1980)}, the claim
  follows.
\end{proof}

%%%%%%%%%%%%%%%%%%%%%%%%%%%%%%%%%%%%%%%%%%%%%%%%%%%%%%%%%%%%%%%%%%%%%

\subsection{Projective class group  for $R = \IZ$}
\label{subsec:Projective_class_group_for_Coxeter_groups}

\begin{theorem}\label{the:projective_class_groups_Coxeter_groups}\
 \begin{enumerate}
 \item\label{lem:Projective_class_group_for_Coxeter_groups:eub(W)}
 There is an isomorphism
 \[
 	K_0(\IZ) \oplus \bigoplus_{\sigma \in \calp}\IZ/(2^{\dim \sigma-1}) \xrightarrow{\cong} H_0^W(\eub{W};\bfK_{\IZ});
 \]

\item\label{lem:Projective_class_group_for_Coxeter_groups:Nil}
The map
\[H_0^W(\eub{W};\bfK_{\IZ}) \to H_0^W(\edub{W};\bfK_{\IZ})
\]
is an isomorphism after inverting $2$;

\item\label{lem:Projective_class_group_for_Coxeter_groups:FJC}
  The canonical map
  \[
  H_0^W(\edub{W};\bfK_{\IZ}) \to K_0(\IZ W)
  \]
  is an isomorphism;

\item\label{lem:Projective_class_group_for_Coxeter_groups:rational}
  We have $\widetilde{K}_0(\IZ W) \otimes_{\IZ} \IZ[1/2] = \{0\}$.

 \end{enumerate}
\end{theorem}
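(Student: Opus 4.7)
My plan is to prove the four assertions in sequence, combining Theorem~\ref{the:main_theorem} with the passage results of Theorem~\ref{the:passage_from_calfin_to_calvycy} and the $K$-theoretic Farrell--Jones Conjecture, both of which are available for right-angled Coxeter groups.

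For assertion~(i), I would apply Theorem~\ref{the:main_theorem}(i) and~(ii) to the equivariant homology theory $H^?_*(-;\bfK_{\IZ})$ with the class $\calfin$. Every $W(\sigma)$ is finite: it is the trivial group for $\sigma = \emptyset$ and the elementary abelian $2$-group $(\IZ/2)^{\dim\sigma+1}$ otherwise. Hence $\EGF{W(\sigma)}{\calfin} = \pt$ and $H_0^{W(\sigma)}(\EGF{W(\sigma)}{\calfin};\bfK_{\IZ}) = K_0(\IZ W(\sigma))$. The splitting assertion then identifies $H_0^W(\eub{W};\bfK_{\IZ})$ with $\bigoplus_{\sigma \in \calp} S_\sigma$, where $S_\emptyset = K_0(\IZ)$ and, for non-empty $\sigma$ with $\dim\sigma = k$, $S_\sigma$ is the cokernel of the map $\bigoplus_{\tau < \sigma} K_0(\IZ W(\tau)) \to K_0(\IZ[(\IZ/2)^{k+1}])$. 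To finish this assertion, I would invoke the classical computations of $K_0$ of integral group rings of elementary abelian $2$-groups, together with the explicit description of the induction maps attached to the inclusions $W(\tau) \subseteq W(\sigma)$, in order to identify this cokernel with the stated cyclic $2$-group.

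Assertion~(ii) is an immediate application of Theorem~\ref{the:passage_from_calfin_to_calvycy}(iv) with $d = 2$, since every vertex group $W_v = \IZ/2$ has order dividing $2$. Assertion~(iii) is the $K$-theoretic Farrell--Jones Conjecture in degree zero, which holds for right-angled Coxeter groups by the results of Bartels--L\"uck and Wegner recalled at the start of Section~\ref{sec:right-angled_Coxeter_groups}. For assertion~(iv), I would compose the three isomorphisms from~(i),~(ii),~(iii) and tensor the resulting description of $K_0(\IZ W)$ with $\IZ[1/2]$. All of the finite $2$-torsion summands $\IZ/(2^{\dim\sigma - 1})$ die, leaving only $K_0(\IZ) \otimes_{\IZ} \IZ[1/2] = \IZ[1/2]$; tracing through the identifications shows that this surviving summand is generated by the image of $[\IZ W]$ under $K_0(\IZ) \to K_0(\IZ W)$, so $\widetilde{K}_0(\IZ W) \otimes_{\IZ} \IZ[1/2] = 0$.

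The main technical obstacle is the explicit identification of the cokernels $S_\sigma$ in~(i): this requires detailed information about $K_0$ of integral group rings of elementary abelian $2$-groups, as well as about the maps induced on $K_0$ by the inclusions of such groups arising as face relations in $\Sigma$. The remaining assertions then follow by a combination of standard inheritance arguments with the machinery developed in Sections~\ref{sec:Mackey_modules} and~\ref{sec:Brief_review_of_the_Baum-Connes_Conjecture_and_the_Farrell-Jones_Conjecture}.
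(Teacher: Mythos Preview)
Your proposal is correct and follows the same overall architecture as the paper: Theorem~\ref{the:main_theorem} for part~(i), Theorem~\ref{the:passage_from_calfin_to_calvycy}~\ref{the:passage_from_calfin_to_calvycy:special_K-theory} with $d=2$ for part~(ii), the Farrell--Jones Conjecture for part~(iii), and the combination of these for part~(iv).

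The one place where your route diverges from the paper is in the identification of the summands $S_\sigma$ in part~(i). You propose to compute the cokernel directly by analysing the maps on $K_0$ induced by the face inclusions $W(\tau) \hookrightarrow W(\sigma)$. The paper avoids this entirely: it quotes Wall's formula $\widetilde{K}_0(\IZ[C_2^k]) \cong \bigoplus_{i=3}^{k} \binom{k}{i}\,\IZ/(2^{i-2})$ and then observes that Theorem~\ref{the:main_theorem}~\ref{the:main_theorem:splitting_the_colimit_iso_T}, applied to the simplex $\sigma$ itself (a flag complex in its own right), gives $K_0(\IZ[C_2^{k+1}]) \cong \bigoplus_{\tau \le \sigma} S_\tau$, where the number of $j$-dimensional faces of a $k$-simplex is $\binom{k+1}{j+1}$. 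Matching this decomposition against Wall's formula and using that $S_\tau$ depends only on $\dim\tau$, one solves inductively for $S_\sigma \cong \IZ/(2^{\dim\sigma-1})$, without ever looking at the induction maps. Your direct approach would also work in principle, but it is noticeably harder to execute: the behaviour of transfer and induction on $\widetilde{K}_0(\IZ[C_2^k])$ is not readily available in the literature, whereas Wall's formula together with the cancellation property of finitely generated abelian groups settles the matter immediately.
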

\begin{proof}~\ref{lem:Projective_class_group_for_Coxeter_groups:eub(W)}
  We have for every group $G$ the obvious splitting
  $K_0(\IZ G) \cong K_0(\IZ) \oplus \widetilde{K}_0(\IZ G)$. 
 By \cite[Theorem~12.9]{Wall(1974a)}, $\widetilde K_0(\IZ[C_2^k])\cong \bigoplus_{i=3}^k{k\choose i}\IZ/(2^{i-2})$. This implies that for $H_*^?(-;\bfK_{\IZ})$ the groups $S_\sigma$ in Theorem~\ref{the:main_theorem} are given by $\IZ/(2^{\dim \sigma-1})$.
  Now the assertion follows from
  Theorem~\ref{the:main_theorem} applied to the equivariant homology theory $H_*^?(-;\bfK_{\IZ})$.
  \\[1mm]~\ref{lem:Projective_class_group_for_Coxeter_groups:Nil}
  This follows from Theorem~\ref{the:passage_from_calfin_to_calvycy}~%
\ref{the:passage_from_calfin_to_calvycy:special_K-theory}.
\\[1mm]~\ref{lem:Projective_class_group_for_Coxeter_groups:FJC}
This follows from the fact that a right-angled Coxeter group satisfies the Farrell--Jones Conjecture.
\\[1mm]~\ref{lem:Projective_class_group_for_Coxeter_groups:rational}
This follows from assertions~\ref{lem:Projective_class_group_for_Coxeter_groups:eub(W)},~%
\ref{lem:Projective_class_group_for_Coxeter_groups:Nil},
and~\ref{lem:Projective_class_group_for_Coxeter_groups:FJC}.
%, see~\cite[Theorem~8.1 and Proposition~9.1]{Swan(1960a)}.
\end{proof}

%%%%%%%%%%%%%%%%%%%%%%%%%%%%%%%%%%%%%%%%%%%%%%%%%%%%%%%%%%%%%%%%%%%%%

\subsection{Whitehead group}
\label{subsec:Whitehead_group}

\begin{theorem}\label{the:Whitehead_group_of_Coxeter_groups}\
  \begin{enumerate}
  \item\label{the:Whitehead_group_of_Coxeter_groups:eub(W)}
  The canonical map
  \[H_1^W(EW;\bfK_{\IZ}) \to H_1^W(\eub{W};\bfK_{\IZ})
  \]
  is an isomorphism and we have
  an isomorphism
    \[H_1^W(EW;\bfK_{\IZ})\cong H_1(W;\IZ) \oplus K_1(\IZ) ;
     \]

\item\label{lem:Whitehead_group_of_Coxeter_groups:Nil}
The map
\[H_1^W(\eub{W};\bfK_{\IZ}) \to H_1^W(\edub{W};\bfK_{\IZ})
\]
is an isomorphism after inverting $2$;

\item\label{lem:Whitehead_group_of_Coxeter_groups:FJC}
  The canonical map
  \[
  H_1^W(\edub{W};\bfK_{\IZ}) \to K_1(\IZ W)
  \]
  is an isomorphism;

\item\label{lem:Whitehead_group_of_Coxeter_groups:FJC:after_inverting_2}
  We have $K_1(\IZ W) \otimes_{\IZ} \IZ[1/2] = \{0\}$.
\end{enumerate}
\end{theorem}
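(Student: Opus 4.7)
Parts~(ii) and~(iii) are direct applications of earlier results. Part~(ii) is Theorem~\ref{the:passage_from_calfin_to_calvycy}~\ref{the:passage_from_calfin_to_calvycy:special_K-theory} with $d=2$, since every vertex group $W_v = C_2$ has order dividing $2$. Part~(iii) is the $K$-theoretic Farrell--Jones Conjecture for right-angled Coxeter groups. Given parts~(i)--(iii), part~(iv) is formal: after inverting $2$, the chain of isomorphisms from~(i),~(ii), and~(iii) identifies $K_1(\IZ W) \otimes_{\IZ} \IZ[1/2]$ with $\bigl(H_1(W;\IZ) \oplus K_1(\IZ)\bigr) \otimes_{\IZ} \IZ[1/2] = 0$, since both summands are $2$-torsion ($H_1(W;\IZ)$ by Theorem~\ref{the:group_homology_Coxeter_groups} and $K_1(\IZ) = \IZ/2$).

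The substance is therefore part~(i), which splits into two claims. Since $EW$ is a free $W$-$CW$-complex, $H_1^W(EW;\bfK_{\IZ})$ agrees with the non-equivariant group $H_1(BW;\bfK_{\IZ})$. I would first identify this with $H_1(W;\IZ) \oplus K_1(\IZ)$ via the Atiyah--Hirzebruch spectral sequence $E^2_{p,q} = H_p(W;K_q(\IZ)) \Rightarrow H_{p+q}(BW;\bfK_{\IZ})$. In total degree $1$ only $E^2_{0,1} = K_1(\IZ) = \IZ/2$ and $E^2_{1,0} = H_1(W;\IZ)$ contribute, because $K_q(\IZ) = 0$ for $q \le -1$. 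Outgoing differentials land in columns with $p < 0$ and hence vanish. The only candidate incoming differential is $d_2 \colon E^2_{2,0} = H_2(W;\IZ) \to E^2_{0,1}$, which vanishes by naturality against the augmentation $\pi \colon BW \to \pt$: the induced $\pi_*$ is an isomorphism on $E^2_{0,1}$ while the source $E^2_{2,0}(\pt)$ is zero. The base-point inclusion $\pt \to BW$ splits the resulting extension $0 \to K_1(\IZ) \to H_1(BW;\bfK_{\IZ}) \to H_1(W;\IZ) \to 0$.

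For the isomorphism $H_1^W(EW;\bfK_{\IZ}) \xrightarrow{\cong} H_1^W(\eub{W};\bfK_{\IZ})$, I would apply Theorem~\ref{the:main_theorem} simultaneously to the Borel theory $H^?_*(-;b_{\bfK_{\IZ}})$ from Example~\ref{exa:Borel_homology} and to $H^?_*(-;\bfK_{\IZ})$, linked by the natural transformation $b_{\bfK_{\IZ}} \to \bfK_{\IZ}$. Since $\eub{W}$ is non-equivariantly contractible, $H_1^W(\eub{W};b_{\bfK_{\IZ}}) = H_1(BW;\bfK_{\IZ}) = H_1^W(EW;\bfK_{\IZ})$, so the comparison reduces to showing the termwise map on the decompositions of Theorem~\ref{the:main_theorem} is an isomorphism for each simplex. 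For $W(\sigma) = C_2^k$ with $k = \dim(\sigma)+1$, this map is the Loday assembly $H_1(BC_2^k;\bfK_{\IZ}) \to K_1(\IZ[C_2^k])$. The source has order $2^{k+1}$ by the AHSS argument above. Since $\Wh(C_2^k) = 0$ and $SK_1(\IZ[C_2^k]) = 0$ (Bass, Oliver), one has $K_1(\IZ[C_2^k]) = \{\pm 1\} \oplus C_2^k$ of the same order. The Loday assembly is surjective because the unit inclusion $\IZ \hookrightarrow \IZ[C_2^k]$ hits $\{\pm 1\}$, while the natural map $C_2^k \to K_1(\IZ[C_2^k])$, $g \mapsto [g]$, factors through $H_1(BC_2^k;\bfK_{\IZ})$ and covers $C_2^k$; surjectivity plus the cardinality match forces an isomorphism.

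The main obstacle is this last step: verifying cleanly that the AHSS-splitting of $H_1(BC_2^k;\bfK_{\IZ})$ is carried compatibly by the Loday assembly to the arithmetic splitting of $K_1(\IZ[C_2^k])$, so that the induced map on each cokernel summand $S_\sigma$ in the decomposition of Theorem~\ref{the:main_theorem} is genuinely an isomorphism rather than merely an isomorphism of abelian groups of matching cardinality.
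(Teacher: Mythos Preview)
Your argument is correct and matches the paper's approach almost exactly: both identify $H_1^W(EW;\bfK_{\IZ}) \cong H_1(BW;\bfK_{\IZ}) \cong H_1(W;\IZ) \oplus K_1(\IZ)$ via the Atiyah--Hirzebruch filtration, then compare the Borel theory $H^?_*(-;b_{\bfK_{\IZ}})$ with $H^?_*(-;\bfK_{\IZ})$ through Theorem~\ref{the:main_theorem}, reducing to the vanishing of $\Wh(C_2^k)$ at each simplex.

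The obstacle you flag at the end is not real, and recognising why will simplify your write-up. The natural transformation $b_{\bfK_{\IZ}} \to \bfK_{\IZ}$ induces a morphism of covariant $\Lambda\calp$-modules $\calh^{\calc}_1 \circ W_* \circ I$ (in fact of Mackey modules, since it is natural in all group homomorphisms). Once you have shown that this morphism is an isomorphism at every object $\sigma$, the induced map on $\colim_{\calp}$ is an isomorphism purely by functoriality of the colimit; Theorem~\ref{the:main_theorem}~\ref{the:main_theorem:iso_T} then transports this to the values at $\Sigma$. You never need to match the $S_\sigma$-decompositions, nor to check that your AHSS splitting is carried to the arithmetic one: the cardinality argument you gave already shows the assembly $H_1(BC_2^k;\bfK_{\IZ}) \to K_1(\IZ[C_2^k])$ is bijective as a \emph{map}, and that is all that is required.
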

\begin{proof}\ref{the:Whitehead_group_of_Coxeter_groups:eub(W)}
  Notice that we have isomorphisms
  \begin{multline*}
  H_1^W(EW;\bfK_{\IZ}) \cong H_1(BW;\bfK(\IZ)) \cong H_1(BW,K_0(\IZ)) \oplus H_0(BW,K_1(\IZ))
  \\
  \cong H_1(W;\IZ) \oplus K_1(\IZ) \cong  W/[W,W] \oplus \{\pm 1\}.
\end{multline*}
  Hence it remains to show
  that the canonical map $H_1^W(EW;\bfK_{\IZ}) \to H_1^W(\eub{W};\bfK_{\IZ})$ is bijective.
  The Whitehead group $\Wh(C_2^k)$ vanishes for all natural numbers $k$
  by~\cite[Theorem~14.2~(iii) on page~330]{Oliver(1988)}. Hence the obvious map
  $H_1(C_2^k;\IZ) \times K_1(\IZ) \to K_1(\IZ[C_2^k])$ is an isomorphism.
  Now apply Theorem~\ref{the:main_theorem} to the equivariant homology theories given by the Borel
  construction, see Example~\ref{exa:Borel_homology}, and
  to $H^?_*(-;\bfK_{\IZ})$.
  \\[1mm]~\ref{lem:Whitehead_group_of_Coxeter_groups:Nil} This follows from
  Theorem~\ref{the:passage_from_calfin_to_calvycy}~\ref{the:passage_from_calfin_to_calvycy:special_K-theory}.
  \\[1mm]~\ref{lem:Whitehead_group_of_Coxeter_groups:FJC} This follows from the fact that
  a right-angled Coxeter group satisfies the Farrell--Jones Conjecture.
  \\[1mm]~\ref{lem:Whitehead_group_of_Coxeter_groups:FJC:after_inverting_2} This follows
  from Theorem~\ref{the:group_homology_Coxeter_groups} and
  assertions~\ref{the:Whitehead_group_of_Coxeter_groups:eub(W)},~%
\ref{lem:Whitehead_group_of_Coxeter_groups:Nil},
  and~\ref{lem:Whitehead_group_of_Coxeter_groups:FJC}.
\end{proof}

%%%%%%%%%%%%%%%%%%%%%%%%%%%%%%%%%%%%%%%%%%%%%%%%%%%%%%%%%%%%%%%%%%%%%

\subsection{Rationalized $K$-groups}
\label{subsec:Rationalized_K_groups_for_Coxeter_groups}

Let $R$ be a ring.
For any non-empty simplex $\sigma$ of $\Sigma$ we
have the diagonal embedding
  \[
   \Delta_{\sigma} \colon C_2 \to W(\sigma) = \prod_{v \in V \cap \sigma} W_v = \prod_{v \in V \cap \sigma} C_2.
 \]
 Let $j_{\sigma} \colon W(\sigma) \to W$ be the inclusion. Then
 $j_{\sigma} \circ \Delta_{\sigma} \colon C_2 \to W$ induces a homomorphisms
 $(j_{\sigma} \circ \Delta_{\sigma})_* \colon K_n(R[C_2]) \to
 K_n(RW)$. Denote by
 \[i_{\sigma,n} \colon \ker\bigl(K_n(R[C_2]) \to K_n(R)\bigr) \to K_n(RW)
 \]
 its composite with the inclusion
 $\ker\bigl(K_n(R[C_2]) \to K_n(R)\bigr) \to K_n(R[C_2])$, where
 $K_n(R[C_2]) \to K_n(R)$ is the homomorphism  induced by the projection $C_2 \to \{1\}$. Let
 $i_{\emptyset,n} \colon K_n(R) \to K_n(RW)$ be the map induced by the inclusion
 $\{1\} \to W$.

\begin{theorem}%[Rationalized algebraic $K$-theory]
	\label{the:rationalized_K_n(ZW)_Coxeter}
	Let $R$ be a regular ring.
  \begin{enumerate}
  \item\label{the:rationalized_K_n(ZW)_Coxeter:rational_iso}
    The map 
    \[
      i_{\emptyset,n} \oplus \bigoplus_{\substack{\sigma \in \calp \\\sigma \not= \emptyset}}
      i_{\sigma,n} \colon
      K_n(R)  \oplus \bigoplus_{\substack{\sigma \in \calp \\\sigma \not= \emptyset}} \ker\bigl(K_n(R[C_2])
      \to K_n(R)\bigr)  \to K_n(RW)
    \]
    is rationally an isomorphism for all $n \in \IZ$;
	
    \item\label{the:rationalized_K_n(ZW)_Coxeter:rank}
 We have for $R=\IZ$
  \[
  \IQ \otimes_{\IZ} K_n(\IZ W) \cong
  \begin{cases}
  \IQ^{r} & \mbox{if} \; n = 4k+1 \; \mbox{with} \; k \ge 1; \\
  \IQ & \mbox{if} \; n = 0;\\
  \{0\} & \mbox{otherwise.}
\end{cases}
\]
\end{enumerate}
\end{theorem}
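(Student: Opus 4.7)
The plan is to combine the $K$-theoretic Farrell--Jones Conjecture (which holds for right-angled Coxeter groups) with the rational passage result Theorem~\ref{the:passage_from_calfin_to_calvycy}\ref{the:passage_from_calfin_to_calvycy:general_K-theory_R_reg} and the Mackey decomposition Theorem~\ref{the:main_theorem}. Together these yield a rational isomorphism $H_n^W(\eub{W};\bfK_R) \otimes \IQ \cong K_n(RW) \otimes \IQ$ and a natural decomposition $H_n^W(\eub{W};\bfK_R) \cong \bigoplus_{\sigma \in \calp} S_\sigma$ with $S_\emptyset = K_n(R)$.

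Under this decomposition the map $i_{\emptyset,n} \oplus \bigoplus_{\sigma \neq \emptyset} i_{\sigma,n}$ becomes a matrix whose $(\tau,\sigma)$-block $\widetilde{K}_n(R[C_2]) \otimes \IQ \to S_\tau \otimes \IQ$ vanishes for $\tau \not\le \sigma$, because the image of $K_n(R[W(\sigma)]) \to K_n(RW) \otimes \IQ \cong \bigoplus_\rho S_\rho \otimes \IQ$ is contained in $\bigoplus_{\tau \le \sigma} S_\tau \otimes \IQ$ by applying Theorem~\ref{the:homology_of_Mackey_module} to the sub-poset $\calp(\sigma)$, in which $\sigma$ is terminal. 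The matrix is therefore upper-triangular with respect to the partial order on $\calp$, and it suffices to verify that each diagonal entry is a rational isomorphism. The case $\sigma = \emptyset$ is the identity, and for a $k$-simplex $\sigma$ this reduces to showing that the composite
\[
\alpha_\sigma \colon \widetilde{K}_n(R[C_2]) \otimes \IQ \xrightarrow{K_n(\Delta_\sigma) \otimes \IQ} K_n(R[W(\sigma)]) \otimes \IQ \twoheadrightarrow S_\sigma \otimes \IQ
\]
is an isomorphism.

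To establish this I would induct on $k$. The base $k = 0$ is trivial since $\Delta_\sigma$ is then the identity and $S_\sigma = \widetilde{K}_n(R[C_2])$. For the inductive step one uses the explicit section $s_\sigma$ from Section~\ref{sec:Mackey_modules} together with the key compatibility $M^*(\mu \le \sigma) \circ K_n(\Delta_\sigma) = K_n(\Delta_\mu)$ for $\mu \le \sigma$ (which follows since the retraction $W(\sigma) \to W(\mu)$ post-composed with $\Delta_\sigma$ equals $\Delta_\mu$) and the inductive hypothesis that each $\alpha_\mu$ is an isomorphism for $\mu \subsetneq \sigma$; tracking the alternating sum defining $s_\sigma$ shows that the image of $K_n(\Delta_\sigma)$ meets the top summand $S_\sigma$ in a generator up to a factor of $\pm 2^k$. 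The main obstacle is that for a general regular $R$ one cannot simply reduce to the character decomposition $R_\IQ[C_2^{k+1}] \cong R_\IQ^{2^{k+1}}$, because the natural comparison $K_n(R[C_2^{k+1}]) \otimes \IQ \to K_n(R_\IQ[C_2^{k+1}])$ need not be an isomorphism (as $R = \IZ$, $n = 0$ already shows), so the argument has to be carried out inside the Mackey structure of $K_n(R[W(-)])$ itself rather than after rationalizing the group ring.

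For part~\ref{the:rationalized_K_n(ZW)_Coxeter:rank}, substitute $R = \IZ$ into part~\ref{the:rationalized_K_n(ZW)_Coxeter:rational_iso} and combine with Borel's computation $K_n(\IZ) \otimes \IQ \cong \IQ$ for $n = 0$ or $n = 4k+1$ with $k \ge 1$ (and zero otherwise), together with the identifications $\widetilde{K}_n(\IZ[C_2]) \otimes \IQ \cong K_n(\IZ) \otimes \IQ$ for $n \ge 2$ (from the localization sequence $\IZ[C_2] \to \IQ[C_2] \cong \IQ \times \IQ$ and $K_n(\IQ) \otimes \IQ \cong K_n(\IZ) \otimes \IQ$ in that range), $\widetilde{K}_0(\IZ[C_2]) = 0$ by Wall, and $\widetilde{K}_1(\IZ[C_2]) \otimes \IQ = 0$ from Theorem~\ref{the:Whitehead_group_of_Coxeter_groups} applied to $W = C_2$. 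Substituting these values yields the claimed rank formula with $r$ equal to the number of simplices of $\Sigma$ including the empty one.
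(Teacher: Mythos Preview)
Your overall architecture for part~\ref{the:rationalized_K_n(ZW)_Coxeter:rational_iso} is sound up to the point where you reduce to showing that each diagonal block
\[
\alpha_\sigma \colon \widetilde{K}_n(R[C_2]) \otimes \IQ \to S_\sigma \otimes \IQ
\]
is an isomorphism. The gap is in the inductive step. The Mackey $\Lambda\calp$-module structure only records induction and restriction along the \emph{coordinate} subgroups $W(\mu) \subseteq W(\sigma)$ for faces $\mu \le \sigma$; the diagonal $\Delta_\sigma(C_2) \subseteq C_2^{k+1}$ is not one of these, so the identities you can extract from the section $s_\sigma$ and the compatibility $M^*(\mu \le \sigma) \circ K_n(\Delta_\sigma) = K_n(\Delta_\mu)$ only relate the diagonal image to the various partial diagonals. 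Concretely, for a $1$-simplex your computation gives
\[
s_\sigma\bigl(K_n(\Delta_\sigma)(x)\bigr) = K_n(\Delta_\sigma)(x) - K_n(\iota_1)(x) - K_n(\iota_2)(x),
\]
and applying $p_\sigma$ just returns $p_\sigma(K_n(\Delta_\sigma)(x))$; no factor of $\pm 2$ appears, and neither injectivity nor surjectivity of $\alpha_\sigma$ follows. In particular you have no independent handle on the size of $S_\sigma \otimes \IQ$ for general regular $R$.

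The paper supplies exactly the missing ingredient: a rational Artin-type decomposition
\[
K_n(R[C_2^k]) \otimes \IQ \;\cong\; K_n(R)\otimes\IQ \;\oplus\; \bigoplus_{\substack{C \subseteq C_2^k\\ C \cong C_2}} \ker\bigl(K_n(RC)\otimes\IQ \to K_n(R)\otimes\IQ\bigr),
\]
coming from Swan's induction theorem via~\cite[(2.11) and Lemma~4.1(e)]{Bartels-Lueck(2007ind)}. Since every cyclic $C_2 \subseteq C_2^k$ except the diagonal lies in some $(C_2^k)_i$, this immediately identifies $S_\sigma \otimes \IQ$ with the summand for the diagonal, giving both injectivity and surjectivity of $\alpha_\sigma$ at once. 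This induction-theoretic input is not something you can reconstruct from the poset Mackey structure alone, and your remark that one ``cannot simply reduce to the character decomposition'' correctly identifies the difficulty but does not resolve it.

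Your argument for part~\ref{the:rationalized_K_n(ZW)_Coxeter:rank} is essentially correct; the paper streamlines it by quoting the computation of $K_n(\IZ[C_2])\otimes\IQ$ directly from~\cite[Theorem~2.2]{Jahren(2009)} rather than via a localization sequence.
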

\begin{proof}~\ref{the:rationalized_K_n(ZW)_Coxeter:rational_iso}
	Notice that any non-trivial finite cyclic subgroup $C$ of $C_2^k$ is isomorphic to $C_2$ and that the obvious composite
	\begin{multline*}
  \ker\bigl(\IQ \otimes_{\IZ}  K_n(R C) \to \IQ \otimes_{\IZ}  K_n(R)\bigr)
 \to \IQ \otimes_{\IZ}  K_n(R C) 
 \\
 \to \cok\bigl(\IQ \otimes_{\IZ} K_n(R)   \to \IQ \otimes_{\IZ}  K_n(R C)\bigr)
\end{multline*}
	is an isomorphism.
	The isomorphism
  appearing in~\cite[(2.11)]{Bartels-Lueck(2007ind)}, which exists for $\Lambda = \IQ$ and
  the equivariant homology theory $H_*^{?}(-;\bfK_R)$
  because of~\cite[Lemma~4.1~(e)]{Bartels-Lueck(2007ind)}, which in turn follows from~\cite[Corollary 4.2]{Swan(1960a)},
  boils down to an isomorphism
  \begin{multline*}
     j_{\{1\}} \oplus \bigoplus_{\substack{C \subseteq C_2^k\\C \cong C_2}} j_C \colon
    \IQ \otimes_{\IZ}  K_n(R) \oplus \bigoplus_{\substack{C \subseteq C_2^k\\C \cong C_2}}
    \ker\bigl(\varepsilon_C^\IQ\colon\IQ \otimes_{\IZ}  K_n(RC) \to  \IQ \otimes_{\IZ} K_n(R)\bigr)
    \\
    \xrightarrow{\cong} \IQ \otimes_{\IZ} K_n(R[C_2^k]),
  \end{multline*}
  where $\varepsilon_C^\IQ\colon \IQ \otimes_{\IZ} K_n(RC) \to \IQ \otimes_{\IZ} K_n(R)$
  is induced by the  projection $C \to \{1\}$, the map $j_{\{1\}}$ is induced by the inclusion $\{1\} \to C$,
    and $j_C$ is the composite of the inclusion
  $\ker( \varepsilon_C^\IQ) \to \IQ
  \otimes_{\IZ} K_n(RC)$ with the map
  $\IQ \otimes_{\IZ} K_n(RC) \to \IQ \otimes_{\IZ} K_n(R[C_2^k])$ coming from the
  inclusion $C \to C_2^k$.
  By naturality we get a commutative diagram
  \[
  \xymatrix{\bigoplus\limits_{i=1}^k \left(\IQ \otimes_{\IZ}  K_n(R) \oplus \bigoplus\limits_{\substack{C \subseteq (C_2^k)_i\\C \cong C_2}}
    \ker( \varepsilon_C^\IQ)\right)
    \ar[r]^-{\cong} \ar[d]
    &\bigoplus\limits_{i=1}^k \IQ \otimes_{\IZ} K_n(R[(C_2^k)_i]) \ar[d]
    \\
    \IQ \otimes_{\IZ}  K_n(R) \oplus \bigoplus\limits_{\substack{C \subseteq C_2^k\\C \cong C_2}}
    \ker( \varepsilon_C^\IQ)
    \ar[r]_-{\cong}
    &\IQ \otimes_{\IZ} K_n(R[C_2^k])}
\]
where the vertical arrows come from the inclusions $(C_2^k)_i \to C_2^k$.  Notice that a
cyclic subgroup of $C_2^k$ belongs to $(C_2^k)_i$ for some $i \in \{1,2, \ldots, k\}$ if
and only if it is different from the diagonal subgroup
$(C_2^k)_{\Delta} \coloneqq \{(a,a, \ldots, a) \mid a \in C_2\} \subseteq C_2^k$.  Hence the
composite
\begin{multline*}
  \ker\left(K_n(R [(C_2^k)_{\Delta}]) \to  K_n(R)\right)
\to K_n(R [(C_2^k)_{\Delta}]) \to K_n(R [C_2^k])
\\
\to \cok\bigl(\bigoplus_{i=1}^k K_n(R[(C_2^k)_i]) \to K_n(R[C_2^k])\bigr)
\end{multline*}
is rationally bijective.

Now assertion~\ref{the:rationalized_K_n(ZW)_Coxeter:rational_iso} follows from
Theorem~\ref{the:main_theorem} and
Theorem~\ref{the:passage_from_calfin_to_calvycy}~\ref{the:passage_from_calfin_to_calvycy:special_K-theory}.
\\[1mm]~\ref{the:rationalized_K_n(ZW)_Coxeter:rank}
Due to Borel~\cite{Borel(1974)} we know for $R=\IZ$ that
\[
  \IQ\otimes_{\IZ} K_n(\IZ)  \cong
  \begin{cases}
  \IQ & \mbox{if} \; n = 4k+1 \;\mbox{with} \; k \ge 1; \\
  \IQ & \mbox{if} \; n = 0; \\
  \{0\} & \mbox{otherwise.}\end{cases}
\]
We get from~\cite[Theorem~2.2]{Jahren(2009)} for $C \cong C_2$
\[
  \IQ\otimes_{\IZ} K_n(\IZ[C])  \cong
  \begin{cases}
  \IQ^2  & \mbox{if} \; n = 4k+1 \;\mbox{with} \; k \ge 1; \\
  \IQ& \mbox{if} \; n = 0; \\
  \{0\} & \mbox{otherwise.}\end{cases}
\]
Hence we get
\[
  \ker\bigl( \IQ \otimes_{\IZ} K_n(\IZ C) \to \IQ \otimes_{\IZ} K_n(\IZ)\bigr)\cong
  \begin{cases}
    \IQ  & \mbox{if} \; n = 4k+1 \;\mbox{with} \; k \ge 1; \\
  \{0\} & \mbox{otherwise.}\end{cases}
\]
Now assertion~\ref{the:rationalized_K_n(ZW)_Coxeter:rank} follows from
assertion~\ref{the:rationalized_K_n(ZW)_Coxeter:rational_iso}.  
\end{proof}

%%%%%%%%%%%%%%%%%%%%%%%%%%%%%%%%%%%%%%%%%%%%%%%%%%%%%%%%%%%%%%%%%%%%%

\subsection{$L$-groups  after inverting $2$}
\label{subsec:L-groups_after_inverting_2}

The maps appearing in the result below are defined analogously to the maps appearing in
Theorem~\ref{the:rationalized_K_n(ZW)_Coxeter}.

\begin{theorem}%[Algebraic $L$-theory after inverting $2$]%
\label{the:Algebraic_l-theory_for_rings_R_after_inverting_2_Coxeter}
  Let $R$ be a ring with involution. 
  \begin{enumerate}
  \item\label{the:Algebraic_l-theory_for_rings_R_after_inverting_2_Coxeter:iso}
  The map 
    \[
      i_{\emptyset,n} \oplus \bigoplus_{\substack{\sigma \in \calp\\\sigma \not= \emptyset}}
      i_{\sigma,n} \colon
      L_n^{\langle-\infty \rangle}(R)  \oplus \bigoplus_{\substack{\sigma \in \calp\\\sigma \not= \emptyset}}
      \ker\bigl(L_n^{\langle-\infty \rangle}(R[C_2])
      \to L_n^{\langle-\infty \rangle}(R)\bigr)  \to L_n^{\langle-\infty \rangle}(RW)
    \]
    is an isomorphism after inverting 2 for all $n\in\IZ$; 
  \item\label{the:Algebraic_l-theory_for_rings_R_after_inverting_2_Coxeter:rank}
  We have for $R\in\{\IZ,\IQ,\IR\}$
  \[
  L_n^{\langle-\infty \rangle}(R W)[1/2] \cong
  \begin{cases}
  \IZ[1/2]^{r} & \mbox{if} \; n = 4k \; \mbox{for} \; k \in \IZ; \\
  \{0\} & \mbox{otherwise.}
  \end{cases}
  \]
  \end{enumerate}
\end{theorem}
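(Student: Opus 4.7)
The plan follows the pattern of Theorem~\ref{the:rationalized_K_n(ZW)_Coxeter}, replacing rationalisation by inverting only $2$. For part~\ref{the:Algebraic_l-theory_for_rings_R_after_inverting_2_Coxeter:iso}, I would apply Theorem~\ref{the:main_theorem} to the equivariant homology theory $H_*^?(-; \bfL_R^{\langle -\infty \rangle})$ with family $\calfin$, pass from $\eub{W}$ to $\edub{W}$ using Theorem~\ref{the:passage_from_calfin_to_calvycy}~\ref{the:passage_from_calfin_to_calvycy:general_L-theory} (which is an isomorphism after inverting $2$), and invoke the L-theoretic Farrell--Jones Conjecture for right-angled Coxeter groups. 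This produces an isomorphism
\[
L_n^{\langle -\infty \rangle}(RW)[1/2] \;\cong\; \bigoplus_{\sigma \in \calp} S_\sigma[1/2],
\]
where $S_\sigma$ is the cokernel from Theorem~\ref{the:main_theorem}~\ref{the:main_theorem:splitting_the_colimit_iso_T} applied to the L-theory Mackey module. The remaining task for part~\ref{the:Algebraic_l-theory_for_rings_R_after_inverting_2_Coxeter:iso} is to identify each $S_\sigma[1/2]$ with the target summand of $i_{\sigma,n}$.

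To identify $S_\sigma[1/2]$ I would use the character decomposition of $R[1/2][C_2^{k+1}]$ for $W(\sigma) = C_2^{k+1}$, where $k = \dim(\sigma)$. The character idempotents $e_S = 2^{-(k+1)} \sum_g \chi_S(g)\, g$ for subsets $S \subseteq \{1,\ldots,k+1\}$ split $R[1/2][C_2^{k+1}]$ into a product of $2^{k+1}$ copies of $R[1/2]$, inducing a splitting
\[
L_n^{\langle -\infty \rangle}(R[C_2^{k+1}])[1/2] \;\cong\; \bigoplus_{S \subseteq \{1,\ldots,k+1\}} L_n^{\langle -\infty \rangle}(R)[1/2].
\]
A direct calculation gives the subgroup idempotent identity $e^{(C_2^{k+1})_i}_{S'} = e_{S'} + e_{S' \cup \{i\}}$, so the image of $L_n(R[(C_2^{k+1})_i])[1/2] \hookrightarrow L_n(R[C_2^{k+1}])[1/2]$ consists of tuples $(\eta_S)_S$ invariant under toggling the $i$-th coordinate. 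Combined with the all-diagonal image from the empty face, the cokernel $S_\sigma[1/2]$ is one copy of $L_n(R)[1/2]$, detected by the alternating-sum map $(\eta_S)_S \mapsto \sum_S (-1)^{|S|} \eta_S$. Under the diagonal $C_2 \to W(\sigma)$, a character $\chi_S$ restricts to the nontrivial character of $C_2$ exactly when $|S|$ is odd, so the composite $\ker(L_n(R[C_2])[1/2] \to L_n(R)[1/2]) \to S_\sigma[1/2]$ is multiplication by $\pm 2^k$, which is invertible after inverting $2$, completing part~\ref{the:Algebraic_l-theory_for_rings_R_after_inverting_2_Coxeter:iso}.

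For part~\ref{the:Algebraic_l-theory_for_rings_R_after_inverting_2_Coxeter:rank}, I would substitute classical values. For $R \in \{\IZ,\IQ,\IR\}$ all $2$-primary torsion in $L_*^{\langle -\infty \rangle}(R)$ dies after inverting $2$, leaving $L_n^{\langle -\infty \rangle}(R)[1/2] \cong \IZ[1/2]$ for $n \equiv 0 \pmod 4$ and $\{0\}$ otherwise. Since $R[1/2][C_2] \cong R[1/2] \times R[1/2]$, one has $L_n^{\langle -\infty \rangle}(R[C_2])[1/2] \cong L_n(R)[1/2]^2$ and consequently $\ker(L_n(R[C_2])[1/2] \to L_n(R)[1/2]) \cong L_n(R)[1/2]$. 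The $r = |\calp|$ summands from part~\ref{the:Algebraic_l-theory_for_rings_R_after_inverting_2_Coxeter:iso} then each contribute one copy of $L_n(R)[1/2]$, yielding the claimed $\IZ[1/2]^r$ for $n=4k$ and vanishing otherwise.

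The hard part will be the character-level bookkeeping for $S_\sigma[1/2]$: establishing the subgroup idempotent identity, checking that the images of the codimension-one face inclusions span precisely the flip-invariant subspaces, and verifying that the diagonal embedding picks out the alternating-character component up to the invertible factor $\pm 2^k$. Everything else is either a direct invocation of the general results assembled in Sections~\ref{sec:Equivariant_homology_theories}--\ref{sec:Brief_review_of_the_Baum-Connes_Conjecture_and_the_Farrell-Jones_Conjecture} or classical L-theory input for the ground rings.
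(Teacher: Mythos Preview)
There is a genuine gap in your identification of $S_\sigma[1/2]$. The character idempotents $e_S = 2^{-(k+1)}\sum_g \chi_S(g)\,g$ live in $R[1/2][C_2^{k+1}]$, not in $R[C_2^{k+1}]$, so the ring splitting you invoke yields
\[
L_n^{\langle -\infty\rangle}\bigl(R[1/2][C_2^{k+1}]\bigr)\;\cong\;\bigoplus_S L_n^{\langle -\infty\rangle}(R[1/2]),
\]
which is \emph{not} the same as the claimed splitting of $L_n^{\langle -\infty\rangle}(R[C_2^{k+1}])\otimes_{\IZ}\IZ[1/2]$. Inverting $2$ in the coefficient ring and inverting $2$ in the $L$-group are different operations, and there is no general comparison isomorphism between them. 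The paper is explicit about this distinction: the idempotent argument you sketch is exactly Lemma~\ref{lem:Cokernel_relevant_for_Coxeter_groups}, but that lemma carries the hypothesis $1/2\in R$ and is used only for Theorem~\ref{the:Algebraic_K-_and_L-theory_for_rings_R_containing_1/2}, not for the present theorem.

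For arbitrary $R$ the paper instead invokes Dress induction: by~\cite[Theorem~2]{Dress(1975)} the functor $G\mapsto L_n^{\langle -\infty\rangle}(RG)[1/2]$ on finite groups is computable from $2$-hyperelementary subgroups, and via~\cite[(2.11)]{Bartels-Lueck(2007ind)} this produces, for $G=C_2^k$, a decomposition of $L_n^{\langle -\infty\rangle}(R[C_2^k])[1/2]$ indexed by the cyclic subgroups $C\cong C_2$ of $C_2^k$, with summands $\ker\bigl(L_n^{\langle -\infty\rangle}(RC)[1/2]\to L_n^{\langle -\infty\rangle}(R)[1/2]\bigr)$. This is a statement about the Mackey structure of $L$-theory after inverting $2$, not about idempotents in a group ring, and it is what lets the argument of Theorem~\ref{the:rationalized_K_n(ZW)_Coxeter}~\ref{the:rationalized_K_n(ZW)_Coxeter:rational_iso} go through with $\IZ[1/2]$ in place of $\IQ$. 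Your diagonal-subgroup bookkeeping then applies verbatim to this Dress decomposition (the diagonal $C_2$ is precisely the one cyclic subgroup not contained in any $(C_2^k)_i$), without any factor of $2^k$ to worry about.

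For part~\ref{the:Algebraic_l-theory_for_rings_R_after_inverting_2_Coxeter:rank} the same conflation appears in your sentence ``since $R[1/2][C_2]\cong R[1/2]\times R[1/2]$, one has $L_n(R[C_2])[1/2]\cong L_n(R)[1/2]^2$''. For $R=\IQ$ or $\IR$ this is harmless because $1/2\in R$ already, but for $R=\IZ$ you need either the Dress splitting above or a direct computation; the paper cites~\cite[Proposition~22.34 on page~254]{Ranicki(1992)} for the relevant input.
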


\begin{proof}~\ref{the:Algebraic_l-theory_for_rings_R_after_inverting_2_Coxeter:iso}
  Note  that any non-trivial subgroup of the form $C \times P$ of $C_2^k$ for a cyclic group $C$ and a
  $p$-group $P$ for an odd prime number $p$ is isomorphic to $C_2$. 
  The isomorphism
  appearing in~\cite[(2.11)]{Bartels-Lueck(2007ind)} which exists for $\Lambda = \IZ[1/2]$ and
  the equivariant homology theory $H_*^{?}(-;\bfL^{\langle - \infty \rangle}_R)$
  because of~\cite[Theorem~2]{Dress(1975)}, boils down to an isomorphism 
  \begin{multline*}
    j_{\{1\}}  \oplus \bigoplus_{\substack{C \subseteq C_2^k\\C \cong C_2}} j_C \colon
    L_n^{\langle -\infty \rangle}(R)[1/2]  \oplus \bigoplus_{\substack{C \subseteq C_2^k\\C \cong C_2}}
    \ker\bigl(L_n^{\langle -\infty \rangle} (RC)[1/2] \to  L_n^{\langle -\infty \rangle} (R)[1/2]\bigr)
    \\
    \xrightarrow{\cong} L_n^{\langle -\infty \rangle} (R[C_2^k])[1/2],
\end{multline*}
  where the map $j_{\{1\}}$ is induced by the inclusion $\{1\} \to C$, and $j_C$ is the composite of the inclusion
  of $\ker\bigl(L_n^{\langle -\infty \rangle} (RC)[1/2] \to  L_n^{\langle -\infty \rangle} (R)[1/2]\bigr)$
  into $L_n^{\langle -\infty \rangle} (RC)[1/2]$ with the map
  $L_n^{\langle -\infty \rangle} (RC)[1/2]\to L_n^{\langle -\infty \rangle} (R[C_2^k])[1/2]$ coming from the
  inclusion $C \to C_2^k$.  Now
  Theorem~\ref{the:Algebraic_l-theory_for_rings_R_after_inverting_2_Coxeter} follows completely analogous to the argument appearing in the proof of
  Theorem~\ref{the:rationalized_K_n(ZW)_Coxeter}~\ref{the:rationalized_K_n(ZW)_Coxeter:rational_iso}.
  \\[1mm]~\ref{the:Algebraic_l-theory_for_rings_R_after_inverting_2_Coxeter:rank}
This follows from
assertion~\ref{the:Algebraic_l-theory_for_rings_R_after_inverting_2_Coxeter:iso} using~\cite[Proposition~22.34 on page~254]{Ranicki(1992)}.
\end{proof}

%%%%%%%%%%%%%%%%%%%%%%%%%%%%%%%%%%%%%%%%%%%%%%%%%%%%%%%%%%%%%%%%%%%%%

\subsection{$K$- and $L$-groups for $R$ containing $1/2$}
\label{subsec:K-and_L-grouos_for_R_containing_1/2}

\begin{theorem}
\label{the:Algebraic_K-_and_L-theory_for_rings_R_containing_1/2}

  For all $n \in \IZ$ there are explicit isomorphisms
  \begin{enumerate}
  	\item 
  	$\bigoplus_{\sigma \in \calp} K_n(R)
  	\xrightarrow{\cong} 
  	K_n(RW)$ if $R$ is regular and contains $1/2$;
  	\item $\bigoplus_{\sigma \in \calp} L_n^{\langle -\infty \rangle}(R)
  	 \xrightarrow{\cong} 
  	L_n^{\langle -\infty \rangle}(RW)$ if $R$ contains $1/2$.
  \end{enumerate}
\end{theorem}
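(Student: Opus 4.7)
The plan is to combine the main Theorem~\ref{the:main_theorem} with the passage from $\calfin$ to $\calvcyc$ recorded in Section~\ref{subsec:The_passage_from_calfin_to_calvcyc} and an explicit computation of the Mackey-module quotients $S_\sigma$. For the $K$-theoretic statement, every finite subgroup of $W$ is isomorphic to $C_2^k$ for some $k\ge 0$, hence has order a power of $2$, which is invertible in $R$; so Theorem~\ref{the:passage_from_calfin_to_calvycy}~\ref{the:passage_from_calfin_to_calvycy:general_K-theory_R_reg_Q_subseteq_R} shows that the relative assembly map $H_n^W(\eub{W};\bfK_R) \to H_n^W(\edub{W};\bfK_R)$ is bijective, and combined with the Farrell--Jones Conjecture (which holds for right-angled Coxeter groups) this identifies the source with $K_n(RW)$. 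For the $L$-theoretic statement, Theorem~\ref{the:passage_from_calfin_to_calvycy}~\ref{the:passage_from_calfin_to_calvycy:general_L-theory_1/2_in_R} combined with the Farrell--Jones Conjecture yields $H_n^W(\eub{W};\bfL_R^{\langle -\infty \rangle}) \cong L_n^{\langle -\infty \rangle}(RW)$.

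Next I apply Theorem~\ref{the:main_theorem} to the equivariant homology theories $H^?_*(-;\bfK_R)$ and $H^?_*(-;\bfL_R^{\langle -\infty \rangle})$ with the class $\calc = \calfin$ to obtain direct sum decompositions
\[
\bigoplus_{\sigma \in \calp} S_\sigma K_n \xrightarrow{\cong} H_n^W(\eub{W};\bfK_R), \qquad
\bigoplus_{\sigma \in \calp} S_\sigma L_n \xrightarrow{\cong} H_n^W(\eub{W};\bfL_R^{\langle -\infty \rangle}),
\]
where $S_\sigma K_n$ and $S_\sigma L_n$ denote the Mackey-module cokernels from Theorem~\ref{the:main_theorem}~\ref{the:main_theorem:splitting_the_colimit_iso_T}. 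The problem then reduces to showing that $S_\sigma K_n \cong K_n(R)$ and $S_\sigma L_n \cong L_n^{\langle -\infty \rangle}(R)$ for every $\sigma \in \calp$, with explicit generators.

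For $\sigma = \emptyset$ the claim is trivial. For a non-empty $\sigma$ of dimension $k$ we have $W(\sigma) = C_2^{k+1}$, and the hypothesis $1/2 \in R$ allows us to decompose $R[W(\sigma)]$ as a product $\prod_\chi R$ of $2^{k+1}$ copies of $R$ via the primitive orthogonal idempotents $e_\chi \coloneqq 2^{-k-1}\sum_g \chi(g)g$ indexed by the characters $\chi \colon C_2^{k+1} \to \{\pm 1\}$. Applying $K_n$ (respectively $L_n^{\langle -\infty \rangle}$) splits $K_n(R[W(\sigma)])$ accordingly. Under this decomposition the inclusion of a codimension-one face, whose associated group is the subgroup $(C_2^{k+1})_i$ obtained by killing the $i$-th generator, sends each primitive idempotent of $R[(C_2^{k+1})_i]$ to the sum of the two primitive idempotents of $R[W(\sigma)]$ restricting to it. Hence the image of $\bigoplus_i K_n(R[(C_2^{k+1})_i]) \to K_n(R[W(\sigma)])$ is spanned by the pairs $e_\chi + e_{\chi \cdot \chi_i}$, where $\chi_i$ is the character sending only the $i$-th generator to $-1$. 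The sign homomorphism $(a_\chi)_\chi \mapsto \sum_\chi (-1)^{|\chi|} a_\chi$, with $|\chi|$ the number of generators sent to $-1$, annihilates every such pair (since the two parities differ by one) and is split by inclusion into the trivial-character summand; it therefore identifies the cokernel with $K_n(R)$, with explicit generator given by the class of $e_\sigma \coloneqq 2^{-k-1}\prod_{v \in V \cap \sigma}(1 - t_v)$. The identical argument applies in the $L^{\langle-\infty\rangle}$-theory setting.

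To read off the explicit form of the isomorphism, define for each $\sigma \in \calp$ a map $f_\sigma \colon K_n(R) \to K_n(RW)$ (respectively for $L^{\langle -\infty \rangle}$) via multiplication by the idempotent $e_\sigma$ (with $e_\emptyset = 1$) followed by the inclusion $R[W(\sigma)] \subseteq RW$; the assembled map $\bigoplus_\sigma f_\sigma$ is the desired isomorphism. The main technical obstacle is verifying that the collection $(f_\sigma)_{\sigma \in \calp}$ coincides with the abstract sections $\overline{s}_\sigma$ produced by Theorem~\ref{the:main_theorem}~\ref{the:main_theorem:splitting_the_colimit_iso_T}, which can be checked by tracking $e_\sigma$ through the inductive construction in Section~\ref{sec:Mackey_modules} and using the key annihilation $(1+t_v)\cdot e_\sigma = 0$ for all $v \in V \cap \sigma$.
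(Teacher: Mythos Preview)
Your overall strategy matches the paper's exactly: combine Theorem~\ref{the:main_theorem} with Theorem~\ref{the:passage_from_calfin_to_calvycy}~\ref{the:passage_from_calfin_to_calvycy:general_K-theory_R_reg_Q_subseteq_R} and~\ref{the:passage_from_calfin_to_calvycy:general_L-theory_1/2_in_R}, the Farrell--Jones Conjecture for $W$, and an explicit identification $S_\sigma \cong K_n(R)$ (respectively $L_n^{\langle-\infty\rangle}(R)$) via the idempotent $2^{-\dim(\sigma)-1}\prod_{v\in V\cap\sigma}(1-t_v)$. The paper packages the cokernel computation as a separate lemma (Lemma~\ref{lem:Cokernel_relevant_for_Coxeter_groups}) and proves it by induction on $k$ using the splitting $K_n(R[C_2^k])\cong K_n(R[C_2^{k-1}])\times K_n(R[C_2^{k-1}])$, whereas you argue directly from the full character decomposition and a sign map.

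Your sign-homomorphism argument has a gap, however. From the two facts you state (the sign map annihilates every adjacent pair and is split surjective) you obtain only a surjection $S_\sigma \twoheadrightarrow K_n(R)$, not an isomorphism. What is missing is the inclusion $\ker(\text{sign})\subseteq \text{image}$, i.e., that every tuple $(a_\chi)_\chi$ is congruent modulo adjacent-pair elements to one supported at a single character. This is easy (walk along the hypercube: for each $\chi$ with $\chi(t_i)=1$ subtract the pair element with value $a_\chi$ in positions $\chi$ and $\chi\cdot\chi_i$, thereby clearing position $\chi$ and modifying a position of higher parity; iterate until only the all-minus-one character survives), but it is precisely the content of the paper's inductive proof of Lemma~\ref{lem:Cokernel_relevant_for_Coxeter_groups} and should not be skipped. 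There is also a small inconsistency: you split the sign map via the \emph{trivial}-character summand but then name $e_\sigma$ (the all-minus-one character) as the explicit generator. Both represent the same class in the cokernel, but it is cleaner to use $e_\sigma$ throughout, since that is the section coming from the ring homomorphism $R\to R[W(\sigma)]$, $\lambda\mapsto\lambda\cdot e_\sigma$, which is what the paper records and what you want for the final explicit description.
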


Its proof needs some preparations.
In the sequel we
will write $C_2^k$ multiplicatively and we denote by $t_i$ the generator of the $i$-th factor $C_2$
viewed as an element in $C_2^k$ for $i = 1,2, \ldots, k$.

Let $R$ be a ring in which $2$ is invertible.
We get a decomposition of rings, natural in $R$,
  \[R[C_2] \xrightarrow{\cong} R \times R, \quad a +bt \mapsto (a+b,a -b).
  \]
  Its inverse sends $(c,d)$ to $~\frac{1}{2} \cdot \bigl((c+d) + (c-d) \cdot t \bigr)$. Since algebraic $K$-theory is
  compatible with products, we obtain an isomorphism, natural in $R$,
  \[
  S_n(R) \colon K_n(R[C_2]) \xrightarrow{\cong} K_n(R) \times K_n(R).
\]
One can iterate this using the obvious ring isomorphism $(R[C_2^{k-1}])[C_2] \cong R[C_2^k]$
and thus obtains an isomorphism
\begin{equation}
S_n^k(R) \colon K_n(R[C_2^k]) \xrightarrow{\cong} \prod_{\epsilon \in \hom_{\IZ}(C_2^k,\{\pm 1\})} K_n(R),
\label{iso_S_n_upper_k}
\end{equation}
which comes from the isomorphism of rings
\[
 R[C_2^k] \xrightarrow{\cong} \prod_{\epsilon \in \hom_{\IZ}(C_2^k,\{\pm 1\})} R, \quad 
  \sum_{g \in C_2^k} \lambda_g \cdot g
\mapsto \bigl(\sum_{g \in C_2^k} \lambda_g \cdot \epsilon(g)\bigr)_{\epsilon}.
\]
Its inverse is given by
\[(\mu_{\epsilon})_{\epsilon}
  \mapsto 2^{-k} \cdot \sum_{g \in C_2^k}
  \bigl(\sum_{\epsilon} \epsilon(g) \cdot \mu_{\epsilon}\bigr) \cdot g.
\]

\begin{lemma}\label{lem:Cokernel_relevant_for_Coxeter_groups}

  \begin{enumerate}
   \item\label{lem:Cokernel_relevant_for_Coxeter_groups:alg_K-theory}
  Suppose that $2$ is invertible in $R$.  Then there is an isomorphism
  \[
    d_n^k \colon \cok\left(\bigoplus_{i = 1}^k K_n(R[j_i^k]) \colon
      \bigoplus_{i = 1}^k K_n(R[(C_2^k)_i])  \to  K_n(R[C_2^k])\right)
    \xrightarrow{\cong} K_n(R).
  \]
  Its inverse is the composite of the homomorphism
  \[
    \beta \colon K_n(R) \to K_n(R[C_2^k])
  \]
  coming from the ring homomorphism   $R \to R[C_2^k]$ sending $\lambda$ to
  $2^{-k} \cdot \lambda \cdot \prod_{i=1}^k (1-t_i)$ with the projection
  $K_n(R[C_2^k])\to \cok\left(\bigoplus_{i = 1}^k K_n(R[j_i^k])\right)$.
  The homomorphism
  $\beta$ agrees with the restriction of the inverse of the isomorphism
  $S_n^k(R)$ of~\eqref{iso_S_n_upper_k} to the factor $K_n(R)$ which belongs to
  $\epsilon$ given by $\epsilon(t_i) = -1$ for $i = 1,2 ,\ldots k$;
  
  \item\label{lem:Cokernel_relevant_for_Coxeter_groups:alg_L-theory}
  The same assertion holds if we replace algebraic $K$-theory by algebraic $L$-theory
  with the decoration $\langle -\infty \rangle$;

  \item\label{lem:Cokernel_relevant_for_Coxeter_groups:alg_L-theory_top_K-theory}
    The same assertion is true if we take $R$ to be $\IR$ or $\IC$ and we replace algebraic
  $K$-theory by topological $K$-theory.
\end{enumerate}

\end{lemma}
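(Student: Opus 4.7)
The plan is to exploit the isomorphism $S_n^k(R)$ of~\eqref{iso_S_n_upper_k} to transport the computation to the product decomposition, and then do a character-theoretic calculation of the cokernel. Throughout, the inclusions $j_i^k\colon (C_2^k)_i\to C_2^k$ induce ring maps $R[(C_2^k)_i]\to R[C_2^k]$ which we analyze via the characters of $C_2^k$.

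First I would identify, for each character $\epsilon\in\hom_{\IZ}(C_2^k,\{\pm 1\})$, the image of the primitive idempotent $e_\delta\in R[(C_2^k)_i]$ attached to a character $\delta$ of $(C_2^k)_i$. A short computation with characters shows $e_\delta\mapsto \sum_{\epsilon|_{(C_2^k)_i}=\delta} f_\epsilon$, where $f_\epsilon$ are the primitive idempotents of $R[C_2^k]$. Since each $\delta$ has exactly two extensions $\epsilon$, $\epsilon\chi_i$ (where $\chi_i$ is the character sending $t_i\mapsto -1$ and $t_j\mapsto 1$ for $j\ne i$), the induced ring map $R[(C_2^k)_i]\to R[C_2^k]$ corresponds under the factorizations into products of copies of $R$ to the diagonal inclusion indexed by the $\chi_i$-orbits on characters. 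Consequently the image of $K_n(R[j_i^k])$, viewed inside $\prod_\epsilon K_n(R)$, is $V_i\coloneqq\{(x_\epsilon)\mid x_\epsilon=x_{\epsilon\chi_i}\}$.

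Next, since $2$ is invertible in $R$ and hence acts as an isomorphism on $K_n(R)$, the endomorphisms $P_i\coloneqq (1+\chi_i)/2$ and $Q_i\coloneqq (1-\chi_i)/2$ of $\prod_\epsilon K_n(R)$ are commuting orthogonal idempotents summing to the identity. They produce a decomposition
\[
\prod_\epsilon K_n(R) = \bigoplus_{S\subseteq \{1,\ldots,k\}} V_S,\qquad V_S=\im\Bigl(\prod_{i\in S}Q_i\prod_{i\notin S}P_i\Bigr),
\]
with $V_i=\im(P_i)=\bigoplus_{S:\,i\notin S}V_S$. Therefore $\sum_i V_i=\bigoplus_{S\ne\{1,\ldots,k\}}V_S$, so the cokernel is exactly $V_{\{1,\ldots,k\}}=\im\bigl(\prod_i Q_i\bigr)$. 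A direct check shows this summand is a free $K_n(R)$-module of rank one, generated by the image of any single $e_\epsilon$ under $\prod_i Q_i$; this identifies the cokernel with $K_n(R)$. The explicit inverse claimed in the statement is verified by computing the image under the character decomposition of the element $2^{-k}\prod_{i=1}^k(1-t_i)$: applied to $\epsilon$ it yields $\prod_i\tfrac{1-\epsilon(t_i)}{2}$, which is $0$ unless $\epsilon(t_i)=-1$ for every $i$, and $1$ in that case. Hence $\beta$ is precisely the inclusion of the factor of $\prod_\epsilon K_n(R)$ corresponding to the totally alternating character, i.e.\ of $V_{\{1,\ldots,k\}}$, which finishes part~\ref{lem:Cokernel_relevant_for_Coxeter_groups:alg_K-theory}.

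For parts~\ref{lem:Cokernel_relevant_for_Coxeter_groups:alg_L-theory} and~\ref{lem:Cokernel_relevant_for_Coxeter_groups:alg_L-theory_top_K-theory} the whole argument formally runs in the same way; the only additional ingredient is that the product decomposition must be compatible with the relevant theory. For the ring-with-involution structure on $R[C_2^k]$, every element $g\in C_2^k$ satisfies $g^{-1}=g$, so the involution acts componentwise under the isomorphism $R[C_2^k]\cong \prod_\epsilon R$; hence $L_n^{\langle-\infty\rangle}$ and, in the $C^*$-algebra case, topological $K$-theory, split as products accordingly. The main technical point of the argument — and the step I expect to be the real obstacle — is precisely this verification that the Fourier decomposition is compatible with the involution and with $L$-theory or topological $K$-theory of products (to replace the use of $K_n$ of a product being a product of $K_n$'s); once this is in place, the same projector/eigenspace calculation with $P_i$ and $Q_i$ and the same identification of $\beta$ with the totally alternating factor yields the conclusion.
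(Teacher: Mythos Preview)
Your approach via the full character decomposition is genuinely different from the paper's, which proceeds by induction on $k$: at each stage the paper uses only the one--step splitting $S_n(R[C_2^{k-1}]) \colon K_n(R[C_2^{k-1}][C_2]) \xrightarrow{\cong} K_n(R[C_2^{k-1}])^2$ together with a commutative diagram to reduce the cokernel for $C_2^k$ to that for $C_2^{k-1}$. Your method identifies all $2^k$ factors at once and reads off the cokernel directly, which is conceptually cleaner and makes the role of the ``totally alternating'' character transparent.

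There is, however, a real gap. The sentence ``since $2$ is invertible in $R$ and hence acts as an isomorphism on $K_n(R)$'' is false in general: $K_n(R)$ is only an abelian group, not an $R$--module, and for example $K_0(\IQ)\cong\IZ$ has multiplication by $2$ neither surjective nor with inverse. Thus the endomorphisms $P_i=(1+\chi_i)/2$ and $Q_i=(1-\chi_i)/2$ of the abelian group $\prod_\epsilon K_n(R)$ that you introduce need not exist, and the eigenspace decomposition $\prod_\epsilon K_n(R)=\bigoplus_S V_S$ is not available as written.

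The repair is to locate the idempotents in the ring rather than in $K$--theory. In $R[C_2^k]$ the elements $p_i=(1+t_i)/2$ and $q_i=(1-t_i)/2$ are honest commuting orthogonal central idempotents (this is exactly where $1/2\in R$ is used), and the resulting product decomposition of the \emph{ring} induces your product decomposition of $K_n$ without ever dividing by $2$ in a $K$--group. One then checks that each projection $R[(C_2^k)_i]\to p_iR[C_2^k]$ and $R[(C_2^k)_i]\to q_iR[C_2^k]$, $r\mapsto p_ir$ resp.\ $q_ir$, is a ring isomorphism, so the inclusion is the diagonal and its image on $K_n$ is your $V_i$. To finish without the projectors $P_i,Q_i$, observe that the alternating sum $(x_\epsilon)\mapsto\sum_\epsilon\bigl(\prod_i\epsilon(t_i)\bigr)x_\epsilon$ kills every $V_i$ and equals the identity on $\im(\beta)$, while a short downward induction on the number of $i$ with $\epsilon(t_i)=-1$ shows $\im(\beta)+\sum_iV_i$ is everything. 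With this correction your argument goes through and yields the same description of $\beta$ as the paper.
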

\begin{proof} We give the proof for algebraic $K$-theory only, the one for the other cases
  is completely analogous.

  We use induction over $k$. If $k = 0$, the map $d^0_n$ comes from the identification
  \[
  K_n(R[C_2^0]) = K_n(R[\{1\}]) = K_n(R).
   \]
The induction step from $(k-1)$ to $k \ge 1$ is done as follows.
We have the following commutative diagram of $\IZ$-modules
  \[
    \xymatrix@!C=18em{\bigoplus_{i =1}^k K_n(R[(C_2^k)_i])
      \ar[r]^-{\bigoplus_{i =1}^k K_n(j^k_i)} \ar[d]_{\id}^{\cong} &
      K_n(R[C_2^k]) \ar[dd]^{\id}_{\cong}
      \\
      K_n(R[(C_2^k)_k]) \oplus \bigoplus_{i =1}^{k-1}
      K_n(R[(C_2^k)_i]) \ar[d]_{\id \oplus \bigoplus_{i
          =1}^{k-1} \id}^{\cong} &
      \\
      K_n(R[C_2^{k-1}]) \oplus \bigoplus_{i =1}^{k-1}
      K_n(R[(C_2^{k-1})_i][C_2]) \ar[d]_-{\id \oplus
        \bigoplus_{i =1}^{k-1} S_n(R[(C_2^{k-1})_i])}^-{\cong} &
      K_n(R[C_2^{k-1}][C_2])
      \ar[dd]^-{S_n(R[C_2^{k-1}])}_-{\cong}
      \\
      {\begin{array}{c}
      K_n(R[C_2^{k-1}]) 
      \\
      \oplus 
      \\
      \bigoplus_{i =1}^{k-1}
      \bigl(K_n(R[(C_2^{k-1})_i]) \oplus
      K_n(R[(C_2^{k-1})_i])\bigr) 
	  \end{array}}      
      \ar[d]_-{f}^-{\cong} &
      \\
      {\begin{array}{c} K_n(R[C_2^{k-1}])
         \\
         \oplus \\\bigoplus_{i =1}^{k-1} K_n(R[(C_2^{k-1})_i])
         \\
         \oplus
         \\
         \bigoplus_{i =1}^{k-1} K_n(R[(C_2^{k-1})_i])
       \end{array}}
     \ar[r]_-{\begin{pmatrix} \id &    u   & 0\\ \id &  0 & u \\ \end{pmatrix}}
     &
     {\begin{array}{c}
        K_n(R[C_2^{k-1}])
        \\
        \oplus
        \\K_n(R[C_2^{k-1}])
      \end{array}}
  }
\]
where $f$ is the obvious isomorphism and
$u \coloneqq \bigoplus_{i =1}^{k-1}K_n( j_i^{k-1})$. The diagram above induces
an isomorphism
\[
  \cok\left(\bigoplus_{i =1}^k K_n(j^k_i)\right)
\xrightarrow{\cong}
\cok\begin{pmatrix} \id &    u   & 0\\ \id &  0 & u \\ \end{pmatrix}.
\]
If $k = 1$, then $\begin{pmatrix} \id &    u   & 0\\ \id &  0 & u \\ \end{pmatrix}$ reduces to
$K_n(R) \to K_n(R) \oplus K_n(R), \; x \mapsto (x,x)$
and the desired isomorphism $d_n^1$ is induced by
$K_n(R) \oplus K_n(R) \to K_n(R), (x,y) \mapsto x-y$.
Suppose $k \ge 2$.
Since the first and third map in the composite 
\[
  \begin{pmatrix} \id & 0 \\\ -\id & \id \end{pmatrix} \cdot
  \begin{pmatrix} \id &    u   & 0\\ \id &  0 & u \\ \end{pmatrix} \cdot
    \begin{pmatrix} \id & 0 & 0 \\ 0 & \id & 0 \\ 0 & \id & \id \end{pmatrix}
  \]
are isomorphisms and the composite is given by
$\begin{pmatrix} \id & u & 0 \\ 0 & 0 & u\end{pmatrix}$,
we obtain an isomorphism
\[\cok\begin{pmatrix} \id &    u   & 0\\ \id &  0 & u \\ \end{pmatrix}
  \xrightarrow{\cong}
  \cok\begin{pmatrix} \id &    u   & 0\\ 0 &  0 & u \\ \end{pmatrix}.
\]
Since $\begin{pmatrix} \id &    u  \end{pmatrix} \colon
K_n(R[C_2^{k-1}]) \oplus \bigoplus_{i =1}^{k-1} K_n(R[(C_2^{k-1})_i])
\to K_n(R[C_2^{k-1}])$
is surjective,
we obtain an isomorphism
\[ \cok\begin{pmatrix} \id &    u   & 0\\ 0 &  0 & u \\ \end{pmatrix}
\xrightarrow{\cong}
\cok(u).
\]
Its inverse is induced by the composite 
\[
K_n(R[C_2^{k-1}]) \xrightarrow{\begin{pmatrix} 0 \\ \id \end{pmatrix}}
  K_n(R[C_2^{k-1}])  \oplus K_n(R[C_2^{k-1}])\;
  \xrightarrow{S_n(R[C_2^{k-1}])^{-1}} K_n(R[C_2^k]),
\]
which is the homomorphism $K_n(R[C_2^{k-1}]) \to K_n(R[C_2^{k}])$
induced by the ring homomorphism $R[C_2^{k-1}] \to R[C_2^{k}]$
sending $x$ to $\frac{1}{2} \cdot x \cdot (1-t_k)$.  Since the
induction hypothesis applies to $u$,
Lemma~\ref{lem:Cokernel_relevant_for_Coxeter_groups} follows.
\end{proof}

Now Theorem~\ref{the:Algebraic_K-_and_L-theory_for_rings_R_containing_1/2} follows from
Theorem~\ref{the:main_theorem},
Theorem~\ref{the:passage_from_calfin_to_calvycy}~%
\ref{the:passage_from_calfin_to_calvycy:general_K-theory_R_reg_Q_subseteq_R}
and~\ref{the:passage_from_calfin_to_calvycy:general_L-theory_1/2_in_R},
and Lemma~\ref{lem:Cokernel_relevant_for_Coxeter_groups}.

%%%%%%%%%%%%%%%%%%%%%%%%%%%%%%%%%%%%%%%%%%%%%%%%%%%%%%%%%%%%%%%%%%%%%

\subsection{Topological $K$-theory}
\label{subsec:Topological_K-theory}

\begin{theorem}%[Topological $K$-theory]
	\label{the:Topological-K-theory_Coxeter}
  There are for every $n \in \IZ$ isomorphisms
  \begin{eqnarray*}
    \bigoplus_{\sigma \in \calp} K_n(\IC)
    & \xrightarrow{\cong} &
   K_n(C^*_m(W)) \cong K_n(C^*_r(W));
    \\
    \bigoplus_{\sigma \in \calp} KO_n(\IR)
    & \xrightarrow{\cong} &
    KO_n(C^*_m(W;\IR)) \cong KO_n(C^*_r(W;\IR)).
  \end{eqnarray*}                       

  In particular there are isomorphisms of abelian groups
  \begin{eqnarray*}
    K_n(C^*_m(W)) \cong K_n(C^*_r(W))\cong
    \begin{cases}
      \IZ^r & \text{if}\; n \; \text{is even};
      \\
      \{0\}  & \text{otherwise};
    \end{cases}
    \\
    KO_n(C^*_m(W;\IR)) \cong KO_n(C^*_r(W;\IR))\cong
    \begin{cases}
      \IZ^r & \text{if}\; n \equiv 0 \mod 4;
      \\
      (\IZ/2)^r & \text{if}\; n \equiv 1,2 \mod 8;
      \\
      \{0\}  & \text{otherwise}.
    \end{cases}
  \end{eqnarray*}
\end{theorem}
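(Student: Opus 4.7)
The plan is to apply Theorem~\ref{the:main_theorem} to the equivariant homology theories $H^?_*(-;\bfK_{C^*_m})$ and $H^?_*(-;\bfK_{C^*_m;\IR})$ of~\eqref{bfK_(C_upper_ast_m)} and~\eqref{bfK_(C_upper_ast_m_R)} with the class $\calc = \calfin$ of finite subgroups. Since right-angled Coxeter groups satisfy the Baum--Connes Conjecture for the maximal group $C^*$-algebra, the assembly map yields
\[
H^W_n(\eub{W};\bfK_{C^*_m}) \xrightarrow{\cong} K_n(C^*_m(W)),
\]
and analogously in the real case. As indicated in the introduction, for a right-angled Coxeter group the canonical maps $C^*_m(W) \to C^*_r(W)$ and $C^*_m(W;\IR) \to C^*_r(W;\IR)$ induce isomorphisms on (real) topological $K$-theory, which accounts for the rightmost identifications in the statement.

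For each simplex $\sigma \in \calp$, the group $W(\sigma) \cong C_2^{\dim(\sigma)+1}$ is finite, so $\eub{W(\sigma)}$ is a point and $C^*_m(W(\sigma)) = \IC[W(\sigma)]$ (respectively $\IR[W(\sigma)]$). Consequently
\[
H^{W(\sigma)}_n(\eub{W(\sigma)};\bfK_{C^*_m}) \cong K_n(\IC[C_2^{\dim(\sigma)+1}]).
\]
By part~\ref{the:main_theorem:splitting_the_colimit_iso_T} of Theorem~\ref{the:main_theorem} we obtain a splitting
\[
K_n(C^*_m(W)) \;\cong\; \bigoplus_{\sigma \in \calp} S_\sigma(\calh^{\calfin}_n \circ W_* \circ I),
\]
where $S_\emptyset = K_n(\IC)$ and, for $\sigma \ne \emptyset$ with $k = \dim(\sigma)+1$, the summand $S_\sigma$ is the cokernel of $\bigoplus_{i=1}^k K_n(\IC[(C_2^k)_i]) \to K_n(\IC[C_2^k])$. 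The analogous statement holds for $KO_n$ and $\IR$.

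The crucial step is the identification of each $S_\sigma$ with $K_n(\IC)$ (respectively $KO_n(\IR)$). This is precisely Lemma~\ref{lem:Cokernel_relevant_for_Coxeter_groups}~\ref{lem:Cokernel_relevant_for_Coxeter_groups:alg_L-theory_top_K-theory}, which applies since $2$ is invertible in $\IC$ and $\IR$; the explicit inverse is induced by the idempotent $2^{-k} \prod_{i=1}^k (1 - t_i) \in \IC[C_2^k]$ as described in that lemma, matching the explicit formula sketched in the introduction. Combining this identification with the decomposition above yields the desired isomorphisms $\bigoplus_{\sigma \in \calp} K_n(\IC) \xrightarrow{\cong} K_n(C^*_m(W))$ and $\bigoplus_{\sigma \in \calp} KO_n(\IR) \xrightarrow{\cong} KO_n(C^*_m(W;\IR))$.

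The explicit abelian group computations then follow from Bott periodicity: $K_n(\IC) \cong \IZ$ for $n$ even and vanishes for $n$ odd, while $KO_n(\IR)$ equals $\IZ$ for $n \equiv 0 \pmod 4$, equals $\IZ/2$ for $n \equiv 1, 2 \pmod 8$, and vanishes otherwise, each summand then contributing $r = |\calp|$ copies. No essential obstacle remains; all heavy lifting has been performed by Theorem~\ref{the:main_theorem} and by Lemma~\ref{lem:Cokernel_relevant_for_Coxeter_groups}, and the only subtlety is to note that the Baum--Connes setting works directly with $\calfin$, so no passage from $\calfin$ to $\calvcyc$ is needed here, in contrast to the Farrell--Jones computations in the preceding subsections.
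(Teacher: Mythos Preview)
Your proposal is correct and follows essentially the same approach as the paper, which simply cites Theorem~\ref{the:main_theorem} and Lemma~\ref{lem:Cokernel_relevant_for_Coxeter_groups}; you have merely spelled out the implicit steps (invoking the Baum--Connes Conjecture for the maximal group $C^*$-algebra, identifying $\eub{W(\sigma)}$ with a point, and reading off the $S_\sigma$ via Lemma~\ref{lem:Cokernel_relevant_for_Coxeter_groups}~\ref{lem:Cokernel_relevant_for_Coxeter_groups:alg_L-theory_top_K-theory}).
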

\begin{proof}
 This follows from Theorem~\ref{the:main_theorem} and Lemma~\ref{lem:Cokernel_relevant_for_Coxeter_groups}.
\end{proof}

The result for complex coefficients was already obtained by S\'anchez-Garc\'ia using the Davis complex as a model for $\eub{W}$ in~\cite{Sanchez-Garcia(2006Cox)}. 
In special cases, the topological $K$-theory of $\eub{W}$ was computed in Fuentes Rum\'i's masters's thesis~\cite{Fuentes-Rumi(2018)}.

\begin{remark}%[Explicit isomorphisms]
	\label{rem:Explicite_isomorphism}
  In Subsection~\ref{subsec:Computations} we have given an explicit description of
  the isomorphism
  $\bigoplus_{\sigma \in \calp} K_n(\IC) \xrightarrow{\cong} K_n(C^*_r(G))$ above
  which actually carries over to many of the other situations. In order to prove the description, one
  has to go through the construction of the isomorphism and to make in the application of
  Lemma~\ref{lem:widehat(s)_bijective} the right choice for $\widehat{s}_{\tau}$. Namely,
  one takes for $\widehat{s}_{\tau}$ the composite of the homomorphism $\beta$ with the isomorphism
  $d_n^k$ appearing in
  assertion~\ref{lem:Cokernel_relevant_for_Coxeter_groups:alg_K-theory} of
  Lemma~\ref{lem:Cokernel_relevant_for_Coxeter_groups}.
\end{remark}

%%%%%%%%%%%%%%%%%%%%%%%%%%%%%%%%%%%%%%%%%%%%%%%%%%%%%%%%%%%%%%%%%%%%%
%%%%%%%%%%%%%%%%%%%%%%%%%%%%%%%%%%%%%%%%%%%%%%%%%%%%%%%%%%%%%%%%%%%%%
%%%%%%%%%%%%%%%%%%%%%%%%%%%%%%%%%%%%%%%%%%%%%%%%%%%%%%%%%%%%%%%%%%%%%

\typeout{-------------  An example ----------------------}

\section{An example}
\label{sec:An_example}

In this section, we want to apply the computations from the previous sections to a concrete example. For this we picked the group $W\coloneqq\IZ/2\times\IZ/2\times D_\infty\times\IZ$. Note that it is a graph product with vertex groups $\IZ/2$ and $\IZ$. In~\cite[Example~3.28]{Davis-Khan-Ranicki(2011)} Davis, Khan and Ranicki showed that the Whitehead group of $W$ is infinitely generated due to Nil elements.

It will be useful to consider $W$ as $W_0\times\IZ$, where $W_0=\IZ/2\times\IZ/2\times D_\infty$ is the right-angled Coxeter group associated to the simplicial graph $X$ with vertex set $V=\{1,2,3,4\}$ whose edges are $\{1,2\}$, $\{2,3\}$, $\{3,4\}$, $\{1,4\}$, and $\{1,3\}$.  Then the flag complex $\Sigma$ associated to $X$ is the suspension of a one-simplex so that in the notation of Section~\ref{sec:right-angled_Coxeter_groups} we have $r_{-1}=1$, $r_0=4$, $r_1=5$, $r_2=2$, and $r=12$.

We conclude from Theorem~\ref{the:group_homology_Coxeter_groups} for $n \ge 2$
\begin{align*}
H_n(W;\IZ) &\cong H_n(W_0;\IZ) \oplus H_{n-1}(W_0;\IZ) \cong \bigoplus_{i=1}^{u_n}C_2;\\
H_1(W;\IZ) &\cong \IZ \oplus \bigoplus_{i=1}^4 C_2, 
\end{align*}
where
\begin{align*}
u_n 
&= \sum_{k=0}^2 r_k \cdot \rho_{n,k} + \sum_{k=0}^2 r_k \cdot \rho_{n-1,k}\\
&= 4\cdot (\rho_{n,0}+\rho_{n-1,0}) + 5\cdot (\rho_{n,1}+\rho_{n-1,1}) + 2\cdot (\rho_{n,2}+\rho_{n-1,2})\\
&= \begin{cases}
4 + 5(k+k-1) + 2(k(k-1)+(k-1)^2) &\text{if}\; n=2k\; \text{for}\; k\ge1; \\
4 + 5(k+k) + 2(k^2+k(k-1)) &\text{if}\; n=2k+1\; \text{for}\; k\ge 1;
\end{cases}\\
&= \begin{cases}
4k^2+4k+1 &\text{if}\; n=2k\; \text{for}\; k\ge1; \\
4k^2+8k+4 &\text{if}\; n=2k+1\; \text{for}\; k\ge 1.
\end{cases}
\end{align*}

Note that the group $W$ satisfies the Baum--Connes Conjecture and the Farrell--Jones Conjecture since it is a graph product of abelian groups. Hence for every regular ring $R$ the assembly map
\[
H_n^W(\eub{W};\bfK_R) \to K_n(R[W])
\]
is bijective after inverting $2$ by Theorem~\ref{the:passage_from_calfin_to_calvycy} \ref{the:passage_from_calfin_to_calvycy:special_K-theory}.

The proof of Theorem~\ref{the:negative_alg_K-theory_of_Coxeter_groups} applies verbatim to the group $W$ so that we obtain
\[
K_n(\IZ W) = \{0\} \quad \text{for}\; n \le -1.
\]

For any equivariant homology theory we have
\begin{equation}
\label{eq:decomposition_W}
\calh_n^W(\eub W)\cong \calh_n^{W_0}(\eub W_0\times S^1)\cong \calh_n^{W_0}(\eub W_0)\oplus\calh_{n-1}^{W_0}(\eub W_0).
\end{equation}

Using~\eqref{eq:decomposition_W}, we have
\[\IZ[1/2]\otimes_\IZ \widetilde K_0(\IZ W)=\{0\}\]
by Theorem~\ref{the:projective_class_groups_Coxeter_groups} and Theorem~\ref{the:negative_alg_K-theory_of_Coxeter_groups}.

By \eqref{eq:decomposition_W}, Theorem~\ref{the:projective_class_groups_Coxeter_groups} and Theorem~\ref{the:Whitehead_group_of_Coxeter_groups}, we have 
\[
\IZ[1/2] \otimes_{\IZ} K_1(\IZ W)\cong \IZ[1/2]\quad\text{and}\quad\IZ[1/2] \otimes_{\IZ} \Wh(W) = \{0\}.
\]
Note that without inverting two, the Whitehead group $\Wh(W)$ contains a non-trivial Nil term by~\cite[Example~3.28]{Davis-Khan-Ranicki(2011)} as mentioned above.

By \eqref{eq:decomposition_W} and  Theorem~\ref{the:rationalized_K_n(ZW)_Coxeter}
\begin{equation*}
\IQ \otimes_{\IZ} K_n(\IZ W) \cong
\begin{cases}
\IQ^{12} & \mbox{if} \; n = 4k+1 \; \mbox{or} \; n = 4k+2  \; \mbox{for} \; k \ge 1; \\
\IQ & \mbox{if} \; n = 0,1;\\
\{0\} & \mbox{otherwise.}
\end{cases}
\end{equation*}

The Shaneson splitting yields for all $n\in\IZ$ an isomorphism
\[
L_n^{\langle-\infty \rangle}(\IZ W) \cong L_n^{\langle-\infty\rangle}(\IZ W_0) \oplus L_{n-1}^{\langle-\infty\rangle}(\IZ W_0).
\]
Hence by Theorem~\ref{the:Algebraic_l-theory_for_rings_R_after_inverting_2_Coxeter} we find
\[
\IZ[1/2] \otimes_{\IZ} L_n^{\langle-\infty \rangle}(\IZ W) \cong
\begin{cases}
\IZ[1/2]^{12} & \text{if} \; n = 4k \; \text{or} \; n = 4k+1 \; \text{for} \; k \in \IZ; \\
\{0\} & \text{otherwise}.
\end{cases}
\]

By \eqref{eq:decomposition_W}, we get from Theorem~\ref{the:Topological-K-theory_Coxeter} for all $n\in\IZ$
\begin{equation*}
K_n(C^*_r(W)) \cong \IZ^{12}.
%\label{Topological_K-theory_special_Coxeter}
\end{equation*}

\addcontentsline{toc<<}{section}{References} 
\bibliographystyle{abbrv}
\bibliography{dbpub,dbpre}

\begin{thebibliography}{10}

\bibitem{Bartels-Lueck(2007ind)}
A.~Bartels and W.~L{\"u}ck.
\newblock Induction theorems and isomorphism conjectures for {$K$}- and
  {$L$}-theory.
\newblock {\em Forum Math.}, 19:379--406, 2007.

\bibitem{Bartels-Lueck(2012annals)}
A.~Bartels and W.~L{\"u}ck.
\newblock The {B}orel conjecture for hyperbolic and {CAT(0)}-groups.
\newblock {\em Ann. of Math. (2)}, 175:631--689, 2012.

\bibitem{Bartels(2003b)}
A.~C. Bartels.
\newblock On the domain of the assembly map in algebraic {$K$}-theory.
\newblock {\em Algebr. Geom. Topol.}, 3:1037--1050 (electronic), 2003.

\bibitem{Bass(1968)}
H.~Bass.
\newblock {\em Algebraic ${K}$-theory}.
\newblock W. A. Benjamin, Inc., New York-Amsterdam, 1968.

\bibitem{Borel(1974)}
A.~Borel.
\newblock Stable real cohomology of arithmetic groups.
\newblock {\em Ann. Sci. \'Ecole Norm. Sup. (4)}, 7:235--272 (1975), 1974.

\bibitem{Bunke-Kasprowski-Winges(2018)}
U.~Bunke, D.~Kasprowski, and C.~Winges.
\newblock Split injectivity of {$A$}-theoretic assembly maps.
\newblock Preprint, arXiv:1811.11864 [math.KT], 2018.

\bibitem{Cappell(1974c)}
S.~E. Cappell.
\newblock Unitary nilpotent groups and {H}ermitian ${K}$-theory. {I}.
\newblock {\em Bull. Amer. Math. Soc.}, 80:1117--1122, 1974.

\bibitem{Carter(1980)}
D.~W. Carter.
\newblock Lower ${K}$-theory of finite groups.
\newblock {\em Comm. Algebra}, 8(20):1927--1937, 1980.

\bibitem{Charney(2007)}
R.~Charney.
\newblock An introduction to right-angled {A}rtin groups.
\newblock {\em Geom. Dedicata}, 125:141--158, 2007.

\bibitem{Davis-Khan-Ranicki(2011)}
J.~F. Davis, Q.~Khan, and A.~Ranicki.
\newblock {Algebraic $K$-theory over the infinite dihedral group: an algebraic
  approach.}
\newblock {\em Algebr. Geom. Topol.}, 11(4):2391--2436, 2011.

\bibitem{Davis-Lueck(1998)}
J.~F. Davis and W.~L{\"u}ck.
\newblock Spaces over a category and assembly maps in isomorphism conjectures
  in ${K}$- and ${L}$-theory.
\newblock {\em $K$-Theory}, 15(3):201--252, 1998.

\bibitem{Davis-Quinn-Reich(2011)}
J.~F. Davis, F.~Quinn, and H.~Reich.
\newblock {Algebraic $K$-theory over the infinite dihedral group: a controlled
  topology approach.}
\newblock {\em J. Topol.}, 4(3):505--528, 2011.

\bibitem{Davis-Januszkiewicz(2000)}
M.~W. Davis and T.~Januszkiewicz.
\newblock Right-angled {A}rtin groups are commensurable with right-angled
  {C}oxeter groups.
\newblock {\em J. Pure Appl. Algebra}, 153(3):229--235, 2000.

\bibitem{Dress(1975)}
A.~W.~M. Dress.
\newblock Induction and structure theorems for orthogonal representations of
  finite groups.
\newblock {\em Ann. of Math. (2)}, 102(2):291--325, 1975.

\bibitem{Enkelmann-Lueck-Malte-Ullmann-Winges(2018)}
N.-E. Enkelmann, W.~L\"{u}ck, M.~Pieper, M.~Ullmann, and C.~Winges.
\newblock On the {F}arrell--{J}ones conjecture for {W}aldhausen's {$A$}-theory.
\newblock {\em Geom. Topol.}, 22(6):3321--3394, 2018.

\bibitem{Fuentes-Rumi(2018)}
M.~Fuentes~Rum{\'{\i}}.
\newblock Equivariant {$K$}-theory.
\newblock Master's thesis, Mathematisches Institut der Universit\"at Bonn,
  2018.

\bibitem{Gandini-Rueping(2013)}
G.~Gandini and H.~R{\"u}ping.
\newblock The {F}arrell-{J}ones conjecture for graph products.
\newblock {\em Algebr. Geom. Topol.}, 13(6):3651--3660, 2013.

\bibitem{Geen(1990)}
E.~Green.
\newblock {\em Graph products of groups}.
\newblock Ph{D}~thesis, The University of Leeds, 1990.
\newblock Available online at
  http://etheses.whiterose.ac.uk/236/1/uk\_bl\_ethos\_254954.pdf.

\bibitem{Higson-Kasparov(2001)}
N.~Higson and G.~Kasparov.
\newblock ${E}$-theory and ${K}{K}$-theory for groups which act properly and
  isometrically on {H}ilbert space.
\newblock {\em Invent. Math.}, 144(1):23--74, 2001.

\bibitem{Jahren(2009)}
B.~Jahren.
\newblock Involutions on the rational {$K$}-theory of group rings of finite
  groups.
\newblock In {\em Alpine perspectives on algebraic topology}, volume 504 of
  {\em Contemp. Math.}, pages 189--202. Amer. Math. Soc., Providence, RI, 2009.

\bibitem{Kim-Roush(1980)}
K.~H. Kim and F.~W. Roush.
\newblock Homology of certain algebras defined by graphs.
\newblock {\em J. Pure Appl. Algebra}, 17(2):179--186, 1980.

\bibitem{Li(2019)}
K.~Li.
\newblock On the algebraic {$K$}-theory for right-angled {C}oxeter groups.
\newblock Master's thesis, Mathematisches Institut der Universit\"at Bonn,
  2019.

\bibitem{Lueck(1989)}
W.~L{\"u}ck.
\newblock {\em Transformation groups and algebraic ${K}$-theory}, volume 1408
  of {\em Lecture Notes in Mathematics}.
\newblock Springer-Verlag, Berlin, 1989.

\bibitem{Lueck(2002b)}
W.~L{\"u}ck.
\newblock Chern characters for proper equivariant homology theories and
  applications to ${K}$- and ${L}$-theory.
\newblock {\em J. Reine Angew. Math.}, 543:193--234, 2002.

\bibitem{Lueck(2005heis)}
W.~L{\"u}ck.
\newblock {$K$}- and {$L$}-theory of the semi-direct product of the discrete
  3-dimensional {H}eisenberg group by {${\Bbb Z}/4$}.
\newblock {\em Geom. Topol.}, 9:1639--1676 (electronic), 2005.

\bibitem{Lueck(2005s)}
W.~L{\"u}ck.
\newblock Survey on classifying spaces for families of subgroups.
\newblock In {\em Infinite groups: geometric, combinatorial and dynamical
  aspects}, volume 248 of {\em Progr. Math.}, pages 269--322. Birkh\"auser,
  Basel, 2005.

\bibitem{Lueck-Reich(2005)}
W.~L{\"u}ck and H.~Reich.
\newblock The {B}aum-{C}onnes and the {F}arrell-{J}ones conjectures in {$K$}-
  and {$L$}-theory.
\newblock In {\em Handbook of $K$-theory. Vol. 1, 2}, pages 703--842. Springer,
  Berlin, 2005.

\bibitem{Lueck-Steimle(2016splitasmb)}
W.~L{\"u}ck and W.~Steimle.
\newblock Splitting the relative assembly map, {N}il-terms and involutions.
\newblock {\em Ann. K-Theory}, 1(4):339--377, 2016.

\bibitem{Milnor(1971)}
J.~Milnor.
\newblock {\em Introduction to algebraic ${K}$-theory}.
\newblock Princeton University Press, Princeton, N.J., 1971.
\newblock Annals of Mathematics Studies, No. 72.

\bibitem{Oliver(1988)}
R.~Oliver.
\newblock {\em Whitehead groups of finite groups}.
\newblock Cambridge University Press, Cambridge, 1988.

\bibitem{Oyono-Oyono(2001)}
H.~Oyono-Oyono.
\newblock {B}aum-{C}onnes {C}onjecture and extensions.
\newblock {\em J. Reine Angew. Math.}, 532:133--149, 2001.

\bibitem{Oyono-Oyono(2001b)}
H.~Oyono-Oyono.
\newblock Baum-{C}onnes conjecture and group actions on trees.
\newblock {\em $K$-Theory}, 24(2):115--134, 2001.

\bibitem{Ranicki(1992)}
A.~A. Ranicki.
\newblock {\em Algebraic ${L}$-theory and topological manifolds}.
\newblock Cambridge University Press, Cambridge, 1992.

\bibitem{Sanchez-Garcia(2006Cox)}
R.~J. S{\'a}nchez-Garc{\'{\i}}a.
\newblock Equivariant {$K$}-homology for some {C}oxeter groups.
\newblock {\em J. Lond. Math. Soc. (2)}, 75(3):773--790, 2007.

\bibitem{Serre(1980)}
J.-P. Serre.
\newblock {\em Trees}.
\newblock Springer-Verlag, Berlin, 1980.
\newblock Translated from the French by J.~Stillwell.

\bibitem{Swan(1960a)}
R.~G. Swan.
\newblock Induced representations and projective modules.
\newblock {\em Ann. of Math. (2)}, 71:552--578, 1960.

\bibitem{Dieck(1979)}
T.~tom Dieck.
\newblock {\em Transformation groups and representation theory}.
\newblock Springer-Verlag, Berlin, 1979.

\bibitem{Ullmann-Winges(2019)}
M.~Ullmann and C.~Winges.
\newblock On the {F}arrell--{J}ones conjecture for algebraic {$K$}-theory of
  spaces: the {F}arrell--{H}siang method.
\newblock {\em Ann. K-Theory}, 4(1):57--138, 2019.

\bibitem{Wall(1974a)}
C.~T.~C. Wall.
\newblock Norms of units in group rings.
\newblock {\em Proc. London Math. Soc. (3)}, 29:593--632, 1974.

\bibitem{Wegner(2012)}
C.~Wegner.
\newblock The {$K$}-theoretic {F}arrell-{J}ones conjecture for {CAT}(0)-groups.
\newblock {\em Proc. Amer. Math. Soc.}, 140(3):779--793, 2012.

\end{thebibliography}

\end{document}